\newcommand{\bea}{\begin{eqnarray}}
\newcommand{\eea}{\end{eqnarray}}
\newcommand{\clh}{\mathcal{H}}
\newcommand{\clk}{\mathcal{K}}
\newcommand{\clm}{\mathcal{M}}
\newcommand{\cln}{\mathcal{N}}
\newcommand{\z}{\bm{z}}
\newcommand{\K}{\bm{k}}
\newcommand{\bxi}{\bm{\xi}}
\newcommand{\bzeta}{\bm{\zeta}}
\newcommand{\T}{\mathbb{T}}
\newcommand{\N}{\mathbb{N}}
\newcommand{\D}{\mathbb{D}}
\newcommand{\Z}{\mathbb{Z}}
\newcommand{\C}{\mathbb{C}}
\newcommand{\lt}{\left}
\newcommand{\rt}{\right}
\newcommand{\Tscr}{\mathscr{T}}
\newcommand{\Bscr}{\mathscr{B}}
\newcommand{\Sscr}{\mathscr{S}}
\newcommand{\Cscr}{\mathscr{C}}
\newcommand{\Ascr}{\mathscr{A}}
\newcommand{\B}{\mathfrak{B}}
\def\textmatrix#1&#2\\#3&#4\\{\bigl({#1 \atop #3}\ {#2 \atop #4}\bigr)}
\def\dispmatrix#1&#2\\#3&#4\\{\left({#1 \atop #3}\ {#2 \atop #4}\right)}
\newcommand{\be}{\begin{equation}}
	\newcommand{\ee}{\end{equation}}
\newcommand{\ben}{\begin{eqnarray*}}
	\newcommand{\een}{\end{eqnarray*}}
\newcommand{\NI}{\noindent}
\newcommand{\bi}{\begin{itemize}}
	\newcommand{\ei}{\end{itemize}}
\newcommand{\wt}[1]{\widetilde{#1}}
\newcommand{\ol}[1]{\overline{#1}}
\newcommand\la{{\langle }}
\newcommand\ra{{\rangle}}
\theoremstyle{definition}
\newtheorem*{theorem*}{Theorem}
\theoremstyle{plain}
\newtheorem{thm}{Theorem}[section]
\newtheorem{cor}[thm]{Corollary}
\newtheorem{lem}[thm]{Lemma}
\newtheorem{prop}[thm]{Proposition}
\theoremstyle{definition}
\newtheorem{rem}[thm]{Remark}
\newtheorem{ex}[thm]{Example}
\numberwithin{equation}{section}
\let\phi=\varphi
\begin{document}

\title[Toeplitz algebra and Symbol map via Berezin transform]{Toeplitz algebra and Symbol map via Berezin transform on $H^2(\D^n)$}
	
\author[Javed]{Mo Javed}
\address{Indian Institute of Technology Roorkee, Department of Mathematics,
		Roorkee-247 667, Uttarakhand,  India}
\email{javediitr07@gmail.com}

\author[Maji]{Amit Maji\dag}
\address{Indian Institute of Technology Roorkee, Department of Mathematics,
		Roorkee-247 667, Uttarakhand,  India}
\email{amit.maji@ma.iitr.ac.in, amit.iitm07@gmail.com, (\dag{Corresponding author)}}

\subjclass[2010]{47B35,  47A13,  32A65}


\keywords{Hardy space, Toeplitz operator, Hankel operator, Toeplitz algebra, Symbol map, Berezin transform}

\begin{abstract}
Let $\mathscr{T}(L^{\infty}(\mathbb{T}))$ be the Toeplitz algebra, that is, the $C^*$-algebra generated by the set $\{T_{\phi} : \phi\in L^{\infty}(\mathbb{T})\}$. Douglas's theorem on \emph{symbol map} states that there exists a $C^*$-algebra homomorphism from $\mathscr{T}(L^{\infty}(\mathbb{T}))$ onto $L^{\infty}(\mathbb{T})$ such that $T_{\phi}\mapsto \phi$ and the kernel of the homomorphism coincides with commutator ideal in $\mathscr{T}(L^{\infty}(\mathbb{T}))$. In this paper, we use the {\textit{Berezin transform}} to study results akin to Douglas's theorem for operators on the Hardy space $H^2(\mathbb{D}^n)$ over the open unit polydisc $\mathbb{D}^n$ for $n\geq 1$. We further obtain a class of bigger $C^*$-algebras than the Toeplitz algebra $\mathscr{T}(L^{\infty}(\mathbb{T}^n))$ for which the analog of \emph{symbol map} still holds true.
\end{abstract}	
\maketitle

\section{Introduction}\label{Sec:Introduction}

One of the major concerns of operator theory and function theory has generally been the study of operators associated with the spaces of holomorphic functions. We often come across linear operators induced by functions (usually called the symbol of the operator), such as Toeplitz operators, Hankel operators, and composition operators; on the other hand, we can investigate functions that are induced by an operator. A great illustration of the relationship between operators and functions is the {\textit{Berezin transform}}.

The {\textit{Berezin transform}}, which was first introduced in \cite{Berezin-Covariant and contravariant} as a tool in quantization, is named after F. A. Berezin. Usually, it involves an open subset $\mathcal{G}$ of $\C^n$ ($n \geq 1$) and a reproducing kernel Hilbert space ${H}(\mathcal{G})$ with a reproducing kernel, say, $K_{\z}$ for 
$\z\in\mathcal{G}$. Then, for a bounded operator $A$ on ${H}(\mathcal{G})$, its {\textit{Berezin transform}}, denoted as $\widetilde{A}$, is a complex-valued function 
on $\mathcal{G}$ defined by
\[
\widetilde{A}(\z)=\langle Ak_{\z}, k_{\z} \rangle_{{H}(\mathcal{G})} \quad (\z \in \mathcal{G}),
\]
where $k_{\z}={K_{\z}}/{\|K_{\z}\|_{{H}(\mathcal{G})}}$ is the normalized reproducing kernel of ${H}(\mathcal{G})$. Thus every bounded linear operator $A$ on ${H}(\mathcal{G})$ induces a bounded function $\widetilde{A}$ on $\mathcal{G}$. Indeed, $\| \widetilde{A} \|_{\infty} \leq \| A\|$. Therefore, the {\textit{Berezin transform}} of an operator yields a crucial information about the operator. Beyond its original purpose, the {\textit{Berezin transform}} has shown usefulness in a number of contexts, including the Hardy space, the Bergman space (\cite{Stf-ALGEBRAIC PROP. OF TOEP. OPER. via BEREZIN}, \cite{Axlr-Berezin transform on the Toeplitz algebra}), the Bargmann-Segal space \cite{Brg_Cbr-TOEPL. OPER. QUANTUM MECH.}. It has also established important relationships with the Bloch space and functions of bounded mean oscillation, and the detailed investigation of these connections can be found in \cite{Zhu-VMO ESV and TOEP. OPER.}. It has also found many applications in many areas of mathematical analysis and mathematical physics (see \cite{Berezin- Quantization}, \cite{Engls-Berezin quantization and reproducing kernels}, \cite{Zhu-Analysis on Fock Spaces}).

Toeplitz operators emerge to be one of the most distinguishable classes of concrete
operators because of their immensely useful and widespread applications including operator theory, function theory, operator algebras, and associated disciplines. 
Furthermore, it is preferable to think of these operators as elements within operator algebras, like the Toeplitz algebra $\Tscr(L^{\infty}(\T))$), that is, the $C^*$-algebra generated by the set $\{T_{\phi} : \phi\in L^{\infty}(\T)\}$ of Toeplitz operators on the Hardy space $H^2(\D)$ (see \cite{{Coburn-$C^*$-algebra gtd by an isometry}}, \cite{Dgl-BANACH ALGEBRA TECH. IN OPER. TH.}). The \emph{symbol map}, which was initially discovered and utilized by Douglas, proves to be an important tool in the study of the Toeplitz algebra. The classical result of Douglas's \emph{symbol map} \cite[chapter 7]{Dgl-BANACH ALGEBRA TECH. IN OPER. TH.} is as follows:
\begin{thm}\label{thm:Douglas's classic result}
Let $\Tscr(L^{\infty}(\T))$ denote the $C^*$-algebra generated by the set
$\{T_{\phi} : \phi\in L^{\infty}(\T)\}$. Then there exists a $C^*$-homomorphism 
\[
\sigma : \Tscr(L^{\infty}(\T)) \to L^{\infty}(\T)
\]
such that kernel of $\sigma$ is the commutator ideal, that is, the ideal in 
$\Tscr(L^{\infty}(\T))$ generated by the commutators $[A, B]=AB-BA$; $A,B\in \Tscr(L^{\infty}(\T))$, and $\sigma(T_{\phi})=\phi$ for all $\phi\in L^{\infty}(\T)$.  
\end{thm}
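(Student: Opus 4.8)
The plan is to realize the symbol map as the inverse of the natural embedding of $L^\infty(\T)$ into the abelianization of $\Tscr(L^\infty(\T))$, with the Berezin transform supplying the analytic input. Write $\clc$ for the commutator ideal and form the quotient $\clq=\Tscr(L^\infty(\T))/\clc$, a commutative $C^*$-algebra with quotient map $\pi$, and let $q\colon L^\infty(\T)\to\clq$ be $q(\phi)=\pi(T_\phi)$. If $q$ is shown to be an isometric $*$-isomorphism onto $\clq$, then $\sigma:=q^{-1}\circ\pi$ is the desired homomorphism: $\sigma(T_\phi)=q^{-1}(q(\phi))=\phi$, and $\ker\sigma=\ker\pi=\clc$. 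Everything thus reduces to four properties of $q$: it is $*$-linear, isometric, multiplicative, and surjective.

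The $*$-linearity and the isometry are where the Berezin transform enters, so I would establish them first. For $A=T_\phi$, a computation with the Szeg\H{o} kernel gives $\widetilde{T_\phi}(z)=\la\phi k_z,k_z\ra=\int_\T\phi\,|k_z|^2\,dm$, and since $|k_z|^2$ is the Poisson kernel at $z$, $\widetilde{T_\phi}$ is the Poisson integral of $\phi$, whose nontangential boundary limit is $\phi$ a.e.\ by Fatou's theorem. The technical heart is the lemma that for \emph{every} $A\in\Tscr(L^\infty(\T))$ one has $\|(A-\widetilde A(z))k_z\|\to0$ nontangentially a.e.\ as $z\to\T$: for $A=T_\phi$ this follows from $\|H_\phi k_z\|^2=\widetilde{T_{|\phi|^2}}(z)-\|T_\phi k_z\|^2$ and the recovery of $\phi$ and $|\phi|^2$ from their Poisson integrals at Lebesgue points, which force $\|(T_\phi-\widetilde{T_\phi}(z))k_z\|^2=\|T_\phi k_z\|^2-|\widetilde{T_\phi}(z)|^2\to0$; the property then propagates to finite sums, to products (via the uniform bound $|\widetilde B(z)|\le\|B\|$), and to norm limits, hence to all of $\Tscr(L^\infty(\T))$. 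Because $|\widetilde A(z)-\widetilde W(z)|\le\|A-W\|$, the function $\widetilde A$ is a uniform limit on $\D$ of the $\widetilde W$ for words $W$ in the generators, so it has nontangential boundary values a.e.; these define $\sigma(A)\in L^\infty(\T)$ with $\|\sigma(A)\|_\infty\le\|A\|$, and $\widetilde{A^*}=\ol{\widetilde A}$ gives $\sigma(A^*)=\ol{\sigma(A)}$. The lemma also yields $\widetilde{AB}(z)-\widetilde A(z)\widetilde B(z)\to0$ at the boundary, so $\sigma$ is multiplicative; in particular $\sigma(T_\phi)=\phi$.

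Since $L^\infty(\T)$ is commutative, $\sigma$ kills every commutator, whence $\clc\subseteq\ker\sigma$; concretely, applying asymptotic multiplicativity to commutators and then using $|\widetilde C(z)-\widetilde{C_n}(z)|\le\|C-C_n\|$ to pass to the norm closure shows $\widetilde C(z)\to0$ nontangentially a.e.\ for every $C\in\clc$. The isometry of $q$ now follows: for any $C\in\clc$ we have $\|T_\phi+C\|\ge|\widetilde{T_\phi+C}(z)|$ for all $z$, and letting $z\to e^{i\theta}$ nontangentially gives $\|T_\phi+C\|\ge|\phi(e^{i\theta})|$ for a.e.\ $\theta$, so $\|q(\phi)\|=\inf_{C\in\clc}\|T_\phi+C\|\ge\|\phi\|_\infty$, while $\|q(\phi)\|\le\|T_\phi\|=\|\phi\|_\infty$ is clear.

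Multiplicativity and surjectivity of $q$ are then the only missing pieces, and they collapse to a single statement: $q$ is multiplicative iff $T_\phi T_\psi-T_{\phi\psi}\in\clc$ for all $\phi,\psi$, and once $q$ is a $*$-homomorphism its range is a $C^*$-subalgebra of $\clq$ containing the generators $\pi(T_\phi)$, hence equal to $\clq$. So the theorem rests on the \emph{semicommutator relation} $T_\phi T_\psi-T_{\phi\psi}\in\clc$, equivalently --- through the identity $T_{\phi\psi}-T_\phi T_\psi=H_{\ol{\phi}}^{*}H_\psi$ --- on showing that every Hankel product $H_{\ol{\phi}}^{*}H_\psi$ lies in the commutator ideal, and I expect \textbf{this to be the main obstacle}. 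It is immediate in two cases: when $\phi$ is coanalytic or $\psi$ is analytic the defect vanishes by the Brown--Halmos relations, and for trigonometric-polynomial symbols it has finite rank and so lies in $\clc$, which already contains the rank-one projection $P_0=I-T_zT_{\ol{z}}=[T_{\ol{z}},T_z]$ and therefore all of $\clk$. The difficulty is the passage to arbitrary $\phi,\psi\in L^\infty(\T)$: the Riesz projection is unbounded on $L^\infty(\T)$, so a bounded symbol cannot be split into bounded analytic and coanalytic pieces, and the Fej\'er or Abel approximants converge to $\phi$ only weak-$*$, so the associated defects converge only in the weak or strong operator topology, whereas $\clc$ is merely norm closed. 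Resolving this --- for instance by producing a norm-convergent expansion of $H_{\ol{\phi}}^{*}H_\psi$ assembled from commutators with the shift --- is the crux; granting it, the assembly $\sigma=q^{-1}\circ\pi$ finishes the proof and identifies $\ker\sigma$ with $\clc$.
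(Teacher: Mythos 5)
Your architecture is sound and is essentially the classical route (quotient by the commutator ideal, with the Berezin transform supplying the isometry and the boundary-value calculus, as in Engli\v{s}'s refinement of Douglas), but the proof is not complete: the step you yourself flag as ``the crux'' --- that every semicommutator $T_{\phi\psi}-T_\phi T_\psi=H_{\ol{\phi}}^*H_\psi$ lies in the commutator ideal $\Cscr$ --- is exactly the nontrivial algebraic input, and you leave it unresolved. Your diagnosis of why the obvious attacks fail (unboundedness of the Riesz projection on $L^\infty(\T)$, only weak-$*$/SOT convergence of Fej\'er or Abel approximants against a merely norm-closed ideal) is correct, but without this containment you have not shown that $q(\phi)=\pi(T_\phi)$ is multiplicative, hence not that $\Tscr(L^\infty(\T))/\Cscr\cong L^\infty(\T)$, and the kernel identification $\ker\sigma=\Cscr$ collapses.

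The missing ingredient is the Douglas--Rudin approximation theorem: every unimodular function on $\T$ is a uniform limit of quotients $\ol{\eta}\,\theta$ of inner functions, so finite linear combinations of such quotients are norm-dense in $L^\infty(\T)$. For symbols of this special form the containment is explicit: since $T_{\ol{\eta}}T_\eta=T_{|\eta|^2}=I$, the projection $P_\eta=I-T_\eta T_{\ol{\eta}}$ equals the genuine commutator $[T_{\ol{\eta}},T_\eta]$, and for $\phi=\theta_1\ol{\eta_1}$, $\psi=\theta_2\ol{\eta_2}$ one computes $[T_\phi,T_\psi)=T_{\ol{\eta_1}}T_{\ol{\eta_2}}T_{\theta_1}P_{\eta_2}T_{\theta_2}\in\Cscr$. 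Norm continuity of $(\phi,\psi)\mapsto[T_\phi,T_\psi)$ (it is bilinear with $\|[T_\phi,T_\psi)\|\le 2\|\phi\|_\infty\|\psi\|_\infty$) then transports this to all of $L^\infty(\T)\times L^\infty(\T)$, closing your gap. Note that the paper deliberately sidesteps this point: its Theorem \ref{thm:douglas theorem for polydisk} is stated with kernel equal to the \emph{semicommutator} ideal $\Sscr$ rather than $\Cscr$, precisely because the identity $\Sscr=\Cscr$ rests on Douglas--Rudin, which is available on $\T$ but is an open problem on $\T^n$ for $n>1$ (Questions 1 and 2 in the concluding section). So for the $n=1$ statement you are proving, you must either invoke Douglas--Rudin as above or be content with a homomorphism whose kernel is a priori only $\Sscr\supseteq\Cscr$.
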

The $C^*$-homomorphism $\sigma$ is called the \emph{symbol map}. However, Douglas's proof of Theorem \ref{thm:Douglas's classic result} was mere existential and much abstract in nature until 1982, when Barr\'{i}a and Halmos in \cite{Bar_Hlms-ASYMPTOTIC TOEPLITZ OPERATORS} presented a constructive proof of Theorem \ref{thm:Douglas's classic result}. More specifically, Barr\'{i}a and Halmos obtained the symbol of an operator $A\in\Tscr(L^{\infty}(\T))$ as the symbol of the Toeplitz operator in the SOT-limit of the sequence $\{T_z^{*m}AT_z^m\}_{m=1}^{\infty}$, where $T_{z}$ denotes the unilateral shift on the Hardy space $H^2(\D)$. However, due to the nuances of limits in the strong operator topology, it is still very difficult to find the symbol of any operator $A$ in $\Tscr(L^{\infty}(\T))$. The {\textit{Berezin transform}} is a useful tool in this way to obtain the symbol of an operator $A\in\Tscr(L^{\infty}(\T))$ more concretely. Indeed, the {\textit{Berezin transform}} of a Toeplitz operator on Hardy space reduces to the Poisson integral of the symbol of that Toeplitz operator (see Theorem \ref{thm:radial limit of Toeplitz operaor} below). In particular, Engli\v{s} \cite{Engls-SYMBOL MAP VIA BEREZIN TRAN. ON H2} developed a technique using the {\textit{Berezin transform}} in which he obtained the symbol of an operator $A\in\Tscr(L^{\infty}(\T))$ as the radial boundary function of the {\textit{Berezin transform}} of $A$. The key observation in his proof is that 
\[
T_{\phi}K_{\xi}=\phi(\xi)K_{\xi},
\]
for the densely defined Toeplitz operator $T_{\phi}$ with symbol $\phi\in H^2(\D)^{\perp}$ 
and $K_{\xi}$ is the reproducing kernel of $H^2(\D)$. It uses the fact that 
$\overline{\phi}\in H^2(\D)$ whenever $\phi\in H^2(\D)^{\perp}$. However, this technique fails in the case $\D^n$ for $n>1$, for an example: if we take $\phi(z_1, z_2)=\overline{z_1}z_2$ then both $\phi$ and $\overline{\phi}$ are in $H^2(\D^2)^{\perp}$, and $T_{\phi}K_{\bxi}\neq \phi(\bxi)K_{\bxi}$ for $\bxi\in \D^2$, where $K_{\bxi}(\z)=\dfrac{1}{(1-\overline{\xi_1}z_1)(1-\overline{\xi_2} z_2)}$ is a reproducing kernel of $H^2(\D^2)$. Thus the natural question arises:
 
\vspace{0.1 cm}
\textsf{Question: Is the Douglas's symbol map valid in the setting of the open unit polydisc $\D^n$ for $n >1$?}
\vspace{0.1 cm}	

In this paper we present a comprehensive study of symbols of operators on $H^2(\D^n)$.	
The main contribution of this article is to address the above question with the help of the {\textit{Berezin transform}}. More precisely, we obtain the following result
(see Theorem \ref{thm:douglas theorem for polydisk} below):

\begin{theorem*}
Let $T\in \Tscr(L^{\infty}(\T^n))$, the Toeplitz algebra of operators on $H^2(\D^n)$ for $n \geq 1$. Then $\widetilde{T}\to \phi$ radially for some $\phi\in L^{\infty}(\T^n)$. Moreover, the map 
\[
\Sigma : \Tscr(L^{\infty}(\T^n)) \to L^{\infty}(\T^n), \qquad T \mapsto \phi
\]
is a $C^*$-algebra homomorphism with ker$(\Sigma)=\Sscr$, the \emph{semi-commutator ideal}	in $\Tscr(L^{\infty}(\T^n))$ and $\Sigma(T_{\phi})=\phi$ for any Toeplitz operator $T_{\phi}$.
\end{theorem*}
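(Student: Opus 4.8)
The plan is to define $\Sigma$ directly by $\Sigma(T)=\lim_{r\to 1^-}\widetilde{T}(r\,\cdot\,)$, the radial boundary function of the Berezin transform, and to establish all of its properties first on the dense $*$-subalgebra $\mathcal{A}_0\subseteq\Tscr(L^\infty(\T^n))$ of finite linear combinations of ``words'' $T_{\psi_1}\cdots T_{\psi_m}$ (finite products of Toeplitz operators), extending to the whole algebra afterwards by continuity. The starting point is the generator case: since the Berezin transform of a Toeplitz operator is the Poisson integral of its symbol (Theorem~\ref{thm:radial limit of Toeplitz operaor}), Fatou's theorem on the polydisc gives $\widetilde{T_\psi}(r\zeta)\to\psi(\zeta)$ for a.e.\ $\zeta\in\T^n$, so $\Sigma(T_\psi)=\psi$ is forced and the asserted value $\Sigma(T_\phi)=\phi$ holds at once.

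The technical heart, which must replace the one-variable eigenvector identity $T_\phi K_\xi=\phi(\xi)K_\xi$ that fails on $\D^n$, is the asymptotic vanishing of Hankel terms on normalized kernels. Writing $H_\psi=(I-P)M_\psi|_{H^2(\D^n)}$ with $P$ the orthogonal projection onto $H^2(\D^n)$, I would prove that $\|H_\psi k_{\bxi}\|\to 0$ radially a.e.\ for every $\psi\in L^\infty(\T^n)$. The argument is a squeeze: from $\|H_\psi k_{\bxi}\|^2=\widetilde{T_{|\psi|^2}}(\bxi)-\|T_\psi k_{\bxi}\|^2$ together with the elementary chain
\[
\big|\widetilde{T_\psi}(\bxi)\big|^2\;\le\;\|T_\psi k_{\bxi}\|^2\;\le\;\|\psi k_{\bxi}\|^2=\widetilde{T_{|\psi|^2}}(\bxi),
\]
Fatou's theorem makes both outer quantities tend radially to $|\psi|^2$, forcing $\|T_\psi k_{\bxi}\|^2\to|\psi|^2$ and hence $\|H_\psi k_{\bxi}\|\to 0$. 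Since the semi-commutator of two Toeplitz operators is $T_f T_g-T_{fg}=-H_{\bar f}^{\,*}H_g$, Cauchy--Schwarz then gives $|\widetilde{T_f T_g-T_{fg}}(\bxi)|=|\langle H_g k_{\bxi},H_{\bar f}k_{\bxi}\rangle|\le\|H_g k_{\bxi}\|\,\|H_{\bar f}k_{\bxi}\|\to 0$ radially a.e.; that is, every semi-commutator lies in $\ker\Sigma$.

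With this lemma the remaining pieces assemble quickly. Iterating $T_f T_g=T_{fg}-H_{\bar f}^{\,*}H_g$ writes any word $T_{\psi_1}\cdots T_{\psi_m}$ as $T_{\psi_1\cdots\psi_m}$ plus a sum of operators each carrying a semi-commutator factor; since the radial limit is linear and kills semi-commutators, $\Sigma(T_{\psi_1}\cdots T_{\psi_m})=\psi_1\cdots\psi_m$. Thus on $\mathcal{A}_0$ the map $\Sigma$ is well defined, linear, multiplicative, and $*$-preserving (because $\widetilde{T^*}=\overline{\widetilde{T}}$), and it is contractive since $|\Sigma(T)|\le\|\widetilde{T}\|_\infty\le\|T\|$. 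A contractive homomorphism on a dense subalgebra extends uniquely to a continuous $*$-homomorphism on $\Tscr(L^\infty(\T^n))$; that the extension is still computed by the radial limit follows from a standard three-$\varepsilon$ interchange, using $\|\widetilde{A}-\widetilde{A_k}\|_\infty\le\|A-A_k\|$ for $A_k\to A$ in norm and intersecting the relevant full-measure sets along the sequence $A_k$. This proves the first assertion (existence of the radial limit $\phi=\Sigma(T)$) and that $\Sigma$ is a $C^*$-algebra homomorphism with $\Sigma(T_\phi)=\phi$.

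It remains to identify $\ker\Sigma$ with the semi-commutator ideal $\Sscr$. One inclusion is immediate: $\ker\Sigma$ is a closed two-sided ideal containing every semi-commutator, so $\Sscr\subseteq\ker\Sigma$. For the reverse I would pass to the quotient $C^*$-algebra $\Tscr(L^\infty(\T^n))/\Sscr$ (note $\Sscr$ is self-adjoint, since adjoints of semi-commutators are semi-commutators). In the quotient $[T_f][T_g]=[T_{fg}]$, so $\rho:L^\infty(\T^n)\to\Tscr(L^\infty(\T^n))/\Sscr$, $\psi\mapsto[T_\psi]$, is a $*$-homomorphism with dense range which, being the range of a $*$-homomorphism of $C^*$-algebras, is closed; hence $\rho$ is onto. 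Since $\Sigma$ descends to $\overline{\Sigma}$ on the quotient with $\overline{\Sigma}\circ\rho=\mathrm{id}_{L^\infty(\T^n)}$, the maps $\rho$ and $\overline{\Sigma}$ are mutually inverse isomorphisms; in particular $\overline{\Sigma}$ is injective, which is precisely $\ker\Sigma=\Sscr$. The main obstacle throughout is the key lemma of the second paragraph: everything downstream is formal, but the radial a.e.\ control of $\|H_\psi k_{\bxi}\|$ is where the polydisc analysis (Fatou's theorem together with the squeeze) supplies what the one-variable eigenvector identity can no longer provide.
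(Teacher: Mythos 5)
Your proposal is correct, and although its overall architecture (a dense $*$-subalgebra of words, a contractive extension, and a three-$\varepsilon$ argument to show the extension is still computed by the radial limit) runs parallel to the paper's proof, it reaches the central technical input by a genuinely different and much shorter route. The paper derives $\|H_{\phi}k_{\bxi}\|\to 0$ (Theorem~\ref{thm:Hankel radial limit 0}) from Theorem~\ref{thm:my theorem} and Corollary~\ref{cor:my corollary}, i.e.\ from the $2^n$-fold decomposition $L^2(\T^n)=\bigoplus_{\alpha}H^2(\D^n)^{\alpha}$, the tensor-product computation of $P_{H^2(\D^n)}(fk_{\bxi})$ via Lemma~\ref{lemma:unit disk lemma}, and an approximation/subsequence argument; you instead obtain the same conclusion from the squeeze $|\widetilde{\psi}(\bxi)|^2\le\|T_{\psi}k_{\bxi}\|^2\le\widetilde{|\psi|^2}(\bxi)$ combined with the polydisc Fatou theorem (Theorem~\ref{thm: radial limit theorem}) applied to $\psi$ and $|\psi|^2$, which uses only Lemma~\ref{lemma:radial limit lemma} and Theorem~\ref{thm:radial limit of Toeplitz operaor} and bypasses Section~3's main theorem entirely --- this squeeze is valid and is the most substantive difference. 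Your treatment of the ideal is also organized differently: the paper proves $\widetilde{S}\to 0$ for every $S\in\Sscr$ via the identity $T_w[T_u,T_v)=-[T_w,T_{uv})+[T_{wu},T_v)+[T_w,T_u)T_v$ and then shows directly that every element of $\Tscr(L^{\infty}(\T^n))$ has the form $T_{\phi}+S$, whereas you only need the radial limit to kill the error terms arising when a word is contracted --- do make sure your iteration leaves these in the left-anchored form $H_{\overline{u}}^{*}H_{v}B$ with $B$ a word, since then $|\langle H_{v}Bk_{\bxi},H_{\overline{u}}k_{\bxi}\rangle|\le\|H_{v}B\|\,\|H_{\overline{u}}k_{\bxi}\|\to 0$ follows from the key lemma alone, while $\|H_{\overline{u}}Ak_{\bxi}\|\to 0$ for a general word $A$ is not immediate. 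After that, $\Sscr\subseteq\ker\Sigma$ is automatic because $\ker\Sigma$ is a closed ideal containing the generators, and the reverse inclusion follows from your quotient argument with $\rho(\psi)=[T_{\psi}]$ and $\overline{\Sigma}\circ\rho=\mathrm{id}$. Both routes are sound; yours buys a substantially shorter proof of the theorem itself, while the paper's Theorem~\ref{thm:my theorem} and Corollary~\ref{cor:my corollary}, being off-diagonal statements about $\langle P_{H^2(\D^n)}(fk_{\bxi}),P_{H^2(\D^n)}(gk_{\bxi})\rangle$ for $f\ne g$, are strictly stronger and are reused later (e.g.\ in Theorem~\ref{thm:eigenfunctions of Toeplitz operator}).
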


The rest of the paper is outlined as follows: Section \ref{Sec:Preliminaries} is devoted to establishing the necessary background, and along the way, we fix some notations and state some properties of Poison integral and \textit{Berezin transform}. In Section 3, we prove an important result concerning the radial limits, which serves as a key ingredient in the proof of the \emph{symbol map}. Section 4 concerns with the main result, that is, the \emph{symbol map} (see Theorem \ref{thm:douglas theorem for polydisk}), and we depict through an example that Theorem \ref{thm:douglas theorem for polydisk} can not be extended to the whole of $\B(H^2(\D^n))$, the $C^*$-algebra of all bounded operators on $H^2(\D^n)$. In section 5, we present a family of larger $C^*$-algebras than the Toeplitz algebra 
$\Tscr(L^{\infty}(\T^n))$ for which the analog of symbol map still holds true. In section 6, we conclude with some open questions.

\section{Preliminaries}\label{Sec:Preliminaries}

Let $\Z_{+}$ denote the set of all non-negative integers, $\D=\{z\in\C : |z|<1\}$ be the open unit disc in the complex plane $\C$, $\T=\{z\in\C : |z|=1\}$ the unit circle in $\C$, and $\B(\clh)$ denote the set of all bounded linear operators on a Hilbert space $\clh$. For $n \geq 1$, the Cartesian product of $n$ copies of $\D$, denoted by $\D^n$, is the unit polydisc in $\C^n$ and the $n$-torus $\T^n$ denotes the distinguished boundary of $\D^n$. Then $H^2(\D^n)$ denotes as the Hilbert space of all analytic functions $f$ on $\D^n$ for which
\[
\|f\|_{H^2(\D^n)}:=\underset{0\leq r<1}{\text{sup}}\left(\int_{\T^n}\left|f(r\bzeta)\right|^2\,d\mathfrak{m}_n(\bzeta)\right)^{1/2} < \infty,
\] 
where $\mathfrak{m}_n$ stands for the normalised Lebesgue measure on $\T^n$. We shall write bold faced $\bxi$ and $\bzeta$, respectively as vectors $\bxi=(\xi_1, \dots,\xi_n)\in \D^n$ and $\bzeta=(\zeta_1, \dots,\zeta_n)\in\T^n$ for $n >1$. For $1\leq p<\infty$, $L^{p}(\T^n):=L^{p}(\T^n, d\mathfrak{m}_n)$ is the Banach space of equivalence classes of Lebesgue complex measurable functions $f$ such that $|f|^p$ is integrable on $\T^n$ with respect to $\mathfrak{m}_n$ and $p=2$ gives the Hilbert space $L^2(\T^n)$ of square-integrable equivalence classes of functions with respect to $\mathfrak{m}_{n}$. Also $L^{\infty}(\T^n)$ denotes the commutative $C^*$-algebra of essentially bounded complex measurable functions on $\T^n$ with respect to $\mathfrak{m}_n$, and $H^{\infty}(\D^n)$ stands for the space of bounded holomorphic functions on $\D^n$. Let $\{e_{\K}\}_{\K\in\Z^n}$ denote the standard orthonormal basis for $L^2(\T^n)$, where $e_{\K}(\bzeta)=\zeta_{1}^{k_1}\dots\zeta_{n}^{k_n}$ for $\K=(k_1, \dots,k_n)\in\Z^n$. 
Consider the closed subspace $H^2(\T^n)$ of $L^2(\T^n)$ defined by 
\[
H^2(\T^n):=\{f\in L^2(\T^n) : \la f, e_{\K} \ra_{L^2(\T^n)} = 0\; \mbox{for all~} \K \in\Z^n\setminus\Z_{+}^n \}.
\]
We then identify the Hardy space $H^2(\D^n)$ (via radial limits) with $H^2(\T^n)$ by the unitary map 
\[
H^2(\D^n) \ni \widetilde{f}=\sum_{\K\in \Z_{+}^n}a_{\K}\z^{\K} \mapsto \sum_{\K\in \Z_{+}^n}a_{\K}e_{\K} = f \in H^2(\T^n).
\] 
Now onwards, without any ambiguity, we shall interchangeably write $H^2(\D^n)$ (respectively, its elements $\widetilde{f}$) in place of $H^2(\T^n)$ (respectively, its elements $f$) and vice-versa. Furthermore, for $\widetilde{f}, \widetilde{g}\in H^2(\D^n)$  and $f, g\in H^2(\T^n)$, we have
\[
\langle\widetilde{f},\widetilde{g}\rangle_{H^2(\D^n)}=\langle f, g \rangle_{L^2(\T^n)}.
\]

For $\phi\in L^{\infty}(\T^n)$, the Toeplitz operator $T_{\phi}$ is a bounded operator on $H^2(\D^n)$ given by 
\[
T_{\phi}f=P_{H^2(\D^n)}(\phi f) \qquad (f\in H^2(\D^n)),
\]
and the Hankel operator $H_{\phi}$ with symbol $\phi\in L^{\infty}(\T^n)$ is a bounded operator from $H^2(\D^n)$ to ${H^2(\D^n)^\perp}$ defined by
\[
H_{\phi}f=P_{H^2(\D^n)^\perp}(\phi f) \qquad (f\in H^2(\D^n)),
\]	
where $P_{H^2(\D^n)}$ is the orthogonal projection of $L^2(\T^n)$ onto $H^2(\D^n)$ and
$P_{H^2(\D^n)^\perp} = I_{L^2(\T^n)} - P_{H^2(\D^n)}$. Clearly, the adjoint operator $T_{\phi}^* = T_{\bar{\phi}}$, where ${\bar{\phi}}$ is the complex conjugate of $\phi$. 
It is easy to check that $[T_{\phi}, T_{\psi}) = H_{\overline{\phi}}^{*}H_{\psi}$
for $\phi, \psi \in L^{\infty}(\T^n)$.

We frequently use the identification of the Hardy space over the polydisc $H^2(\D^n)$ with $H^2(\D)\otimes \cdots \otimes H^2(\D)$, the $n$-fold ($n>1$) tensor product of Hilbert space $H^2(\D)$  via the canonical unitary map $z_1^{m_1} \dots z_n^{m_n} \mapsto z^{m_1} \otimes \cdots \otimes z^{m_n}$, where $m_i \in \Z_{+}$. Thus, we shall often write $(f_1\otimes \cdots \otimes f_n)=f_1 \cdots f_n$, where $f_j\in H^2(\D)$. The Hardy space over the polydisc $H^2(\D^n)$ is a reproducing kernel Hilbert space on $\D^n$ for $n \geq 1$. For $\bxi=(\xi_1, \dots, \xi_n)\in \D^n$, the (Szeg\"{o}) kernel $K_{\bxi}$ of $H^2(\D^n)$ is defined as
\[
K_{\bxi}(\bm{z})=\prod_{j=1}^{n}\frac{1}{(1-\overline{\xi_j}z_j)}, \qquad (\bm{z} \in \D^n)
\]
and it reproduces the value of each $f \in H^2(\D^n)$, that means $f(\bxi) =\langle f, K_{\bxi} \rangle $. The normalised reproducing kernel is denoted by
\[
k_{\bxi}(\bm{z}) =\frac{K_{\bxi}(\bm{z})}{\|K_{\bxi}\|} =\frac{\sqrt{(1-|\xi_1|^2) \cdots (1-|\xi_n|^2)}} {(1-\overline{\xi_1}z_1) \cdots (1-\overline{\xi_n}z_n)},
\qquad (\bm{z} \in \D^n).
\] 
In particular, the reproducing kernel $K_{\xi}$ ($\xi\in\D$) of the Hardy space over the disc $H^2(\D)$ is given by
\[
K_{\xi}(z)=\frac{1}{1-\overline{\xi}z} \quad (z \in \D),
\]
and $k_{\xi}(z)= \frac{\sqrt{1-|\xi|^2}}{1-\overline{\xi}z }$ is the normalised reproducing kernel of $H^2(\D)$.

Let $\bxi=(\xi_1, \dots, \xi_n)\in \D^n, \bzeta=(\zeta_1, \dots, \zeta_n)\in \T^n$,
$\xi_j=r_je^{i\theta_j}$ and $\zeta_j=e^{it_j}$ where $0\leq r_j<1$ and $\theta_j, t_j\in [0, 2\pi)$ for all $1\leq j\leq n$. The Poisson kernel $P(\bxi, \bzeta)$ for the unit polydisc $\D^n$ is defined as
\begin{align*}
P(\bxi, \bzeta)
&= \prod_{j=1}^{n}\frac{1-|\xi_j|^2}{|1-\xi_j \overline{\zeta_j}|^2}\\
&= P_{r_1}(\theta_1-t_1) \cdots P_{r_n}(\theta_n-t_n),
\end{align*}
where $P_{r}(\theta)=\frac{1-r^2}{1-2r\cos\theta+r^2}$ is the Poisson kernel for the unit disc $\D$. For a complex Borel measure $\mu$ on $\T^n$, its Poisson integral, denoted by $P[d\mu]$, is the integral given by 
\[
P[d\mu](\bxi)=\int_{\T^n}P(\bxi, \bzeta)\,d\mu(\bzeta) \qquad (\bxi\in \D^n).
\]

\NI
For $\phi\in L^{1}(\T^n)$, its harmonic extension $\wt{\phi}$ into $\D^n$ is defined by the Poisson integral formula
\[
\wt{\phi}(\bxi) =P[\phi d\mathfrak{m}_{n}](\bxi)
=\int_{\T^n}P(\bxi, \bzeta)\phi(\bzeta)\,d\mathfrak{m}_n(\bzeta) \qquad (\bxi\in\D^n).
\]
	
\NI	
Let $\Phi: \D^n \to \C$ and $\phi: \T^n \to \C$ be any two complex functions. Define $\hat{\Phi}$ as
\[
\hat{\Phi}(\bzeta)=\lim_{r\to 1^{-}}\Phi(r\bzeta)
\]
for all $\bzeta\in\T^n$ at which the (radial) limit on the right exists. We say that `$\Phi$ approaches $\phi$ radially', denoted by `$\Phi \to \phi$ \emph{radially}' or `$\Phi(\bxi) \to \phi$ \emph{radially}' if 
\[
\hat{\Phi}(\bzeta)=\phi(\bzeta) \quad \text{for a.e.~ } \bzeta\in\T^n.
\]

\begin{thm}\label{thm: radial limit theorem}
\textnormal{(See Theorem 2.3.1 in \cite{Rudn-FUNCTION THEORY IN POLYDISCS})} 
Let $\phi\in L^1(\T^n)$, and $\nu$ be a measure on $\T^n$ such that $\nu$ is singular 
with respect to $\mathfrak{m}_n$. If $\Phi=P[\phi d\mathfrak{m}_n + d\nu]$, then 
\[
\Phi \to \phi \; \text{ radially }.
\]
In particular, if $\nu=0$, then $\wt{\phi} \to \phi$ radially.
\end{thm}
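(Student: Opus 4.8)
This is the classical Fatou-type theorem for Poisson integrals on the polydisc, so the plan is to follow the maximal-function route, taking care that the one-parameter radial approach keeps the problem within reach of $L^1$ theory. First I would use the linearity of the Poisson integral: writing the measure as $\phi\,d\mathfrak{m}_n + d\nu$, we have $\Phi = \wt{\phi} + P[d\nu]$, where $\wt{\phi} = P[\phi\,d\mathfrak{m}_n]$ is the harmonic extension. It then suffices to prove separately that (i) $\wt{\phi}\to\phi$ radially a.e., and (ii) $P[d\nu]\to 0$ radially a.e.

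For (i) I would first settle the case of continuous $\phi$: since the product Poisson kernel is a positive, mass-one approximate identity, $\wt{\phi}$ extends continuously to $\overline{\D^n}$ and $\wt{\phi}(r\bzeta)\to\phi(\bzeta)$ for every $\bzeta\in\T^n$. For general $\phi\in L^1(\T^n)$ I would run the standard density-plus-maximal-inequality argument, which requires controlling the radial maximal function $\phi^{*}(\bzeta)=\sup_{0\le r<1}|\wt{\phi}(r\bzeta)|$. The crucial point is that a single parameter $r$ appears in every factor, so the kernel $\prod_{j}P_r(\theta_j-t_j)$ is concentrated on a cube of side-length of order $1-r$ about $\bzeta$ and is dominated, uniformly in $r$, by an integrable decreasing envelope of mass $O(1)$ supported on such cubes. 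Consequently $\phi^{*}$ is bounded by a constant times the isotropic Hardy--Littlewood maximal function over cubes in $\T^n$, which is of weak type $(1,1)$; approximating $\phi$ in $L^1$ by continuous functions then yields radial convergence almost everywhere.

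For (ii) I would argue in the same spirit: the identical domination bounds $\sup_{0\le r<1}|P[d\nu](r\bzeta)|$ by a constant times the cube maximal function of $|\nu|$, which is in turn controlled almost everywhere by the symmetric cube derivative $D\nu$. Since $\nu$ is singular with respect to $\mathfrak{m}_n$, the Lebesgue differentiation theorem gives $D\nu=0$ for $\mathfrak{m}_n$-a.e.\ $\bzeta$, so $P[d\nu](r\bzeta)\to 0$ radially a.e. Combining (i) and (ii) gives $\Phi\to\phi$ radially, and the special case $\nu=0$ recovers $\wt{\phi}\to\phi$.

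The main obstacle, and the one place that genuinely needs care, is the maximal estimate underlying (i) and (ii). In several variables the Poisson integral with \emph{independently} varying radii is governed by the strong maximal function, which is not of weak type $(1,1)$ --- it satisfies only an $L\log L$ bound (the Jessen--Marcinkiewicz--Zygmund phenomenon) --- so unrestricted convergence would fail for general $L^1$ data. What saves the theorem is exactly that radial approach locks all the radii together, replacing rectangles of arbitrary eccentricity by cubes; so the real work is to establish the pointwise domination of the single-parameter product Poisson kernel by the isotropic cube maximal operator, thereby restoring the weak-$(1,1)$ inequality on which the whole argument rests.
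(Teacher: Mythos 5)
The paper offers no proof of this statement at all --- it is quoted directly from Rudin's \emph{Function Theory in Polydiscs} (Theorem 2.3.1) --- so the only comparison to make is with the classical argument you are reconstructing. Your outline (split $\Phi=\wt{\phi}+P[d\nu]$, prove (i) by density plus a weak $(1,1)$ maximal inequality, prove (ii) by differentiation of singular measures) is the standard and correct one, and you rightly identify the maximal estimate as the crux. But the estimate you assert at that crux is false for $n\ge 2$: the single-parameter product Poisson kernel is \emph{not} pointwise dominated by the isotropic cube maximal function. The least radially decreasing majorant of $\prod_{j}(1+s_j^2)^{-1}$ is $(1+|s|^2)^{-1}$ (concentrate all of $|s|$ in one coordinate), which is not integrable on $\R^n$ for $n\ge 2$, so the usual ``integrable decreasing envelope'' lemma does not apply --- and the conclusion itself fails. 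Concretely, for $n=2$ put point masses (or mollified bumps) of mass $4^{-k}$ at angular positions $(2^{-k},0)$ relative to $\bzeta$, for $k=1,\dots,K$. Each contributes, as $r\to 1^{-}$,
\[
4^{-k}\,P_r(2^{-k})\,P_r(0)\;\approx\;4^{-k}\cdot\frac{2(1-r)}{(1-r)^2+4^{-k}}\cdot\frac{2}{1-r}\;\longrightarrow\;4,
\]
so the radial maximal function at $\bzeta$ is at least of order $K$, while the total mass is at most $1/3$ and the cube maximal function at $\bzeta$ is $O(1)$. The heavy tails of $\prod_j P_r(s_j)$ along the coordinate hyperplanes are precisely what cube averages cannot see, and both halves (i) and (ii) of your argument rest on this false domination.

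The weak $(1,1)$ inequality for $\sup_{0<r<1}\big|P[\phi\,d\mathfrak{m}_n](r\bzeta)\big|$ is nevertheless true, but it has to be obtained differently: for instance, dominate the kernel by a dyadic sum over eccentricities,
\[
\prod_{j=1}^{n}P_r(s_j)\;\lesssim\;\sum_{(k_1,\dots,k_n)\in\Z_{+}^{n}}2^{-(k_1+\cdots+k_n)}\,\frac{1}{|R_{k_1,\dots,k_n}|}\,\mathbf{1}_{R_{k_1,\dots,k_n}},
\]
where $R_{k_1,\dots,k_n}$ is the rectangle centred at the origin with side lengths $2^{k_j}(1-r)$; observe that the maximal operator over all dilates of a \emph{fixed} rectangular shape is weak $(1,1)$ with constant independent of the shape (affine invariance), and then combine the weak-type bounds by splitting the level $\lambda$ geometrically across the terms, which converges thanks to the factors $2^{-(k_1+\cdots+k_n)}$. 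This is in substance the covering-lemma argument behind Rudin's Theorem 2.3.1; the same decomposition repairs step (ii) for the singular part. Since the paper itself only cites Rudin here, the most economical fix is to do the same rather than to reprove the maximal inequality.
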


The \textit{Berezin transform} of $T\in \B(H^2(\D^n))$ is a complex function on $\D^n$ defined by
\[
\widetilde{T}(\bxi)=\langle Tk_{\bxi}, k_{\bxi} \rangle \quad \text{ for all } \bxi\in \D^n.
\]
Here $k_{\bxi}$ is the normalised reproducing kernel of $H^2(\D^n)$.

The following notions are important and used throughout the paper:
 
\begin{enumerate}
\item For $\phi$, $\psi\in L^{\infty}(\T^n)$, the \emph{semicommutator} is defined as
\[
[T_{\phi}, T_{\psi}) = T_{\phi\psi} - T_{\phi}T_{\psi}.
\]
\item $\Tscr(L^{\infty}(\T^n)):=C^*$-algebra generated by the set $\{T_{\phi} : \phi\in L^{\infty}(\T^n)\}$, called the Toeplitz algebra of operators on $H^2(\D^n)$.
\item $\Cscr:=$ closed ideal in $\Tscr(L^{\infty}(\T^n))$ generated by the set $\{[A, B]=AB-BA : A, B\in \Tscr(L^{\infty}(\T^n)) \},$ called the \emph{commutator ideal}.
\item $\Sscr:=$ closed ideal in $\Tscr(L^{\infty}(\T^n))$ generated by the set
$\{[T_{\phi}, T_{\psi}) : \phi, \psi\in L^{\infty}(\T^n)\},$ called the 
\emph{semicommutator ideal}.
\end{enumerate}

\section{On Radial Limits}
	
In this section we shall discuss about some key results on radial limits which will be used in the next section. Before moving into our main result, we will fix some notations first.

Suppose $H^2(\D)^1:=H^2(\D)$ and $H^2(\D)^{-1}:=H^2(\D)^{\perp}$.
For $\alpha=(\alpha_1, \dots, \alpha_n)\in \{-1, 1\}^n$ and $\K=(k_1, \dots, k_n)\in 
\Z^n$, we write 
\[
H^2(\D^n)^{\alpha} := \bigvee\{e_{\K} : k_j<0 \text{ if } \alpha_j=-1 \text{ and } k_j\geq 0 \text{ if } \alpha_j=1 \text{ for } 1\leq j\leq n \}.
\]
It is easy to check that $H^2(\D^n)^{\alpha}$ is a closed subspace of $L^2(\T^n)$ for all $\alpha\in \{-1,1\}^n$. We identify $H^2(\D^n)^{\alpha}$ with the $n$-fold tensor product of Hibert spaces $H^2(\D)^{\alpha_j}$ for $1\leq j\leq n$ by the unitary map $e_{\K} \mapsto e_{k_1}\otimes\cdots\otimes e_{k_n}$ for all $k_j<0$ (provided $\alpha_j=-1$) and $k_j\geq 0$ (provided $\alpha_j=1$), where
\[
(e_{k_1}\otimes\cdots\otimes e_{k_n})(\bzeta) = \zeta_1^{k_1} \dots \zeta_n^{k_n},
\]
for all $\bzeta\in \T^n$, $k_j<0$ (provided $\alpha_j=-1$) and $k_j\geq 0$ (provided $\alpha_j=1$). We shall simply write $\otimes_{j=1}^{n}f_j=\prod_{j=1}^{n}f_j$ for $f_j\in H^2(\D)^{\alpha_j}$, and also write bold faced $\bm{1}=(1,\dots, 1)\in \{-1, 1\}^n$.

The following lemma is an easy consequence of the Poisson integral:
		
\begin{lem} \label{lemma:radial limit lemma}
Let $f, g\in L^2(\T^n)$. Then 
\[
\la fk_{\bxi}, gk_{\bxi} \ra_{L^2(\T^n)} = (\wt{f\overline{g}})(\bxi), \qquad (\bxi\in \D^n).
\]
Moreover, $\la fk_{\bxi}, gk_{\bxi} \ra_{L^2(\T^n)} \rightarrow f\overline{g}$ radially.
\end{lem}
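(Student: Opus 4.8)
The plan is to compute the inner product $\la f k_{\bxi}, g k_{\bxi}\ra_{L^2(\T^n)}$ directly by unwinding the definition and recognizing the Poisson kernel. Writing this out gives
\[
\la f k_{\bxi}, g k_{\bxi}\ra_{L^2(\T^n)} = \int_{\T^n} f(\bzeta)\overline{g(\bzeta)}\, |k_{\bxi}(\bzeta)|^2 \, d\mathfrak{m}_n(\bzeta).
\]
The crucial observation is that $|k_{\bxi}(\bzeta)|^2$ is exactly the Poisson kernel $P(\bxi,\bzeta)$. Indeed, from the formula for the normalised reproducing kernel,
\[
|k_{\bxi}(\bzeta)|^2 = \frac{(1-|\xi_1|^2)\cdots(1-|\xi_n|^2)}{|1-\overline{\xi_1}\zeta_1|^2 \cdots |1-\overline{\xi_n}\zeta_n|^2} = \prod_{j=1}^n \frac{1-|\xi_j|^2}{|1-\xi_j\overline{\zeta_j}|^2} = P(\bxi,\bzeta),
\]
where I have used $|1-\overline{\xi_j}\zeta_j| = |1-\xi_j\overline{\zeta_j}|$ (the two quantities are complex conjugates of each other, since $|\zeta_j|=1$). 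Substituting this back, the inner product becomes $\int_{\T^n} P(\bxi,\bzeta)(f\overline{g})(\bzeta)\, d\mathfrak{m}_n(\bzeta)$, which is precisely $P[(f\overline{g})\,d\mathfrak{m}_n](\bxi) = (\wt{f\overline{g}})(\bxi)$ by the definition of the harmonic extension. This establishes the first displayed identity.

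For the ``moreover'' part I would simply invoke Theorem \ref{thm: radial limit theorem} with $\nu=0$. Since $f,g\in L^2(\T^n)$, their product $f\overline{g}$ lies in $L^1(\T^n)$ by the Cauchy--Schwarz inequality, so $\phi := f\overline{g}$ is a legitimate input for that theorem. The theorem then gives $\wt{\phi}\to\phi$ radially, i.e. $(\wt{f\overline{g}})(\bxi)\to f\overline{g}$ radially, and combining with the identity just proved yields the claim.

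A minor technical point to be careful about is the integrability needed to write the inner product as an honest integral and to identify it as a Poisson integral: one should note that $f\overline{g}\in L^1(\T^n)$ so that $P[(f\overline{g})\,d\mathfrak{m}_n]$ is well defined, and that $|k_{\bxi}|^2 = P(\bxi,\cdot)$ is bounded on $\T^n$ for each fixed $\bxi\in\D^n$, which justifies the manipulation. This is genuinely routine. Honestly, there is no real obstacle here: the only content is the algebraic identification $|k_{\bxi}(\bzeta)|^2 = P(\bxi,\bzeta)$, after which everything follows mechanically from the definitions and the already-cited radial limit theorem. The lemma is, as the authors say, ``an easy consequence of the Poisson integral,'' and the proof is essentially this one-line kernel computation followed by an application of Theorem \ref{thm: radial limit theorem}.
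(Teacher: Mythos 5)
Your proposal is correct and follows exactly the paper's argument: expand the inner product as an integral, identify $|k_{\bxi}(\bzeta)|^2$ with the Poisson kernel $P(\bxi,\bzeta)$ to recognize the harmonic extension of $f\overline{g}$, and then apply Theorem \ref{thm: radial limit theorem} using $f\overline{g}\in L^1(\T^n)$. Nothing to add.
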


\begin{proof}
Suppose $f, g\in L^2(\T^n)$. Now
\begin{align} 
\la fk_{\bxi}, gk_{\bxi} \ra_{L^2(\T^n)} 
&= \int_{\T^n}(fk_{\bxi})(\bzeta)(\overline{gk_{\bxi}})(\bzeta)\,d\mathfrak{m}_n(\bzeta) \notag \\
&= \int_{\T^n}(f\overline{g})(\bzeta)|k_{\bxi}(\bzeta)|^2\,d\mathfrak{m}_n(\bzeta). \label{eq:radial limit lemma}
\end{align}
Note that 
\[
|k_{\bxi}(\bzeta)|^2
= \frac{(1-|\xi_1|^2) \cdots (1-|\xi_n|^2)}{\left|1-\overline{\xi_1}\zeta_1\right|^2 \cdots \left|1-\overline{\xi_n}\zeta_n\right|^2}=P(\bxi,\bzeta),
\]
where $\bxi=(\xi_1, \dots, \xi_n)\in \D^n$ and $\bzeta=(\zeta_1, \dots, \zeta_n)\in \T^n$. Therefore, the equation \eqref{eq:radial limit lemma} yields
\[
\la fk_{\bxi}, gk_{\bxi} \ra_{L^2(\T^n)} = \wt{(f\overline{g})}(\bxi).
\]
Since $f, g\in L^2(\T^n)$, $f\overline{g}\in L^1(\T^n)$. Hence, Theorem \ref{thm: radial limit theorem} infers that $\langle fk_{\bxi}, gk_{\bxi} \rangle_{L^2(\T^n)} \to f\overline{g}$ radially.
\end{proof}	

The following theorem establishes the fact that the Berezin transform of $T_{\phi}$ 
is the harmonic extension of $\phi$:

\begin{thm} \label{thm:radial limit of Toeplitz operaor}
For $\phi\in L^{\infty}(\T^n)$, $\widetilde{T_{\phi}}(\bxi)=\widetilde{\phi}(\bxi)$ for all $\bxi \in \D^n$. Furthermore, $\widetilde{T_{\phi}} \to \phi$ radially.
\end{thm}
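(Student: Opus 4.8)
The plan is to compute the Berezin transform $\widetilde{T_\phi}(\bxi)=\la T_\phi k_{\bxi}, k_{\bxi}\ra$ directly, recognize it as the Poisson integral $\widetilde{\phi}(\bxi)$, and then read off the radial convergence from Theorem \ref{thm: radial limit theorem}. The only genuinely careful step is the bookkeeping of inner products, so I would address that first.

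First I would unwind the definition of the Toeplitz operator. Since $T_\phi k_{\bxi}=P_{H^2(\D^n)}(\phi k_{\bxi})$ and the $H^2(\D^n)$ pairing agrees with the $L^2(\T^n)$ pairing on $H^2(\D^n)$, I would pass to the ambient $L^2(\T^n)$ inner product and use that $P_{H^2(\D^n)}$ is a self-adjoint idempotent with $P_{H^2(\D^n)}k_{\bxi}=k_{\bxi}$ (as $k_{\bxi}\in H^2(\D^n)$):
\[
\la T_\phi k_{\bxi}, k_{\bxi}\ra
= \la P_{H^2(\D^n)}(\phi k_{\bxi}), k_{\bxi}\ra_{L^2(\T^n)}
= \la \phi k_{\bxi}, P_{H^2(\D^n)} k_{\bxi}\ra_{L^2(\T^n)}
= \la \phi k_{\bxi}, k_{\bxi}\ra_{L^2(\T^n)}.
\]
The key point is that the projection may be moved off $\phi k_{\bxi}$, which need not lie in $H^2(\D^n)$, and onto $k_{\bxi}$, which does, leaving a plain $L^2$-pairing.

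Next I would invoke Lemma \ref{lemma:radial limit lemma} with $f=\phi$ and $g\equiv 1$, both of which lie in $L^2(\T^n)$ since $\phi\in L^{\infty}(\T^n)\subseteq L^2(\T^n)$. This yields $\la \phi k_{\bxi}, k_{\bxi}\ra_{L^2(\T^n)}=\wt{(\phi\,\overline{1})}(\bxi)=\wt{\phi}(\bxi)$, the harmonic extension of $\phi$; equivalently, one substitutes the identity $|k_{\bxi}(\bzeta)|^2=P(\bxi,\bzeta)$ established inside the proof of that lemma to get $\widetilde{T_\phi}(\bxi)=\int_{\T^n}\phi(\bzeta)P(\bxi,\bzeta)\,d\mathfrak{m}_n(\bzeta)=\wt{\phi}(\bxi)$. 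This settles the first assertion.

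Finally, for the radial statement, since $\widetilde{T_\phi}=\wt{\phi}=P[\phi\,d\mathfrak{m}_n]$ and $\phi\in L^{\infty}(\T^n)\subseteq L^1(\T^n)$, I would apply Theorem \ref{thm: radial limit theorem} in the case $\nu=0$ to conclude $\wt{\phi}\to\phi$ radially, hence $\widetilde{T_\phi}\to\phi$ radially. I do not expect any serious obstacle here: the argument is essentially a specialization of Lemma \ref{lemma:radial limit lemma}, and the sole point demanding attention is the transfer of the projection between the two inner products so that $\phi k_{\bxi}$ is never assumed analytic.
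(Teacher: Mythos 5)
Your proposal is correct and follows essentially the same route as the paper: move the projection off $\phi k_{\bxi}$ onto $k_{\bxi}$ to reduce to the plain $L^2(\T^n)$ pairing, identify it as $\wt{\phi}(\bxi)$ via Lemma \ref{lemma:radial limit lemma} (with $g\equiv 1$), and conclude the radial convergence from Theorem \ref{thm: radial limit theorem}. Your write-up just makes explicit the self-adjointness step that the paper leaves implicit.
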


\begin{proof}
Let $\bxi=(\xi_1, \dots, \xi_n)\in \D^n$. Then
\begin{align*} 
\widetilde{T_{\phi}}(\bxi) 
= \langle T_{\phi}k_{\bxi}, k_{\bxi}\rangle 
= \langle P_{H^2(\D^n)}(\phi k_{\bxi}), k_{\bxi} \rangle 
= \langle \phi k_{\bxi}, k_{\bxi} \rangle_{L^2(\T^n)} 
= \widetilde{\phi}(\bxi), 
\end{align*}
where the last equality follows from Lemma \ref{lemma:radial limit lemma}. 

Now Theorem \ref{thm: radial limit theorem} gives $\widetilde{T_{\phi}} \to \phi$ radially.
\end{proof}

\begin{lem} \label{lemma:unit disk lemma}
If $x\in H^2(\D)^{\perp}$, then $P_{H^2(\D)}(xk_{\xi})=\widetilde{x}(\xi)k_{\xi}$ 
for all $\xi \in \D$.
\end{lem}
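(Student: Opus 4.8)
The plan is to prove the identity by a direct Fourier-coefficient computation, exploiting that a boundary function in $H^2(\D)^{\perp}$ is supported on strictly negative frequencies. Since $x\in H^2(\D)^{\perp}$, I would first write $x=\sum_{k<0}a_k e_k$ in $L^2(\T)$ (so that, incidentally, $\overline{x}\in H^2(\D)$), and recall that the normalised kernel expands as $k_{\xi}=\sqrt{1-|\xi|^2}\,\sum_{m\geq 0}\overline{\xi}^{\,m}e_m$. Multiplying the two series and using $e_k e_m=e_{k+m}$ as characters of $\T$ gives $x k_{\xi}=\sqrt{1-|\xi|^2}\,\sum_{k<0,\,m\geq 0}a_k\overline{\xi}^{\,m}e_{k+m}$.

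Next I would apply the Riesz projection $P_{H^2(\D)}$, which simply discards the terms with $k+m<0$. Reindexing the surviving terms by $j=k+m\geq 0$ decouples the double sum, the coefficient of $e_j$ becoming $\big(\sum_{k<0}a_k\overline{\xi}^{-k}\big)\sqrt{1-|\xi|^2}\,\overline{\xi}^{\,j}$, so that $P_{H^2(\D)}(x k_{\xi})=\big(\sum_{k<0}a_k\overline{\xi}^{-k}\big)k_{\xi}$. The last step is to identify the scalar factor: because the Poisson integral of $e_k$ equals $\overline{\xi}^{-k}$ for $k<0$ and $x$ carries only negative frequencies, $\sum_{k<0}a_k\overline{\xi}^{-k}$ is precisely the harmonic extension $\widetilde{x}(\xi)$ of $x$, yielding $P_{H^2(\D)}(x k_{\xi})=\widetilde{x}(\xi)k_{\xi}$.

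The only care needed is analytic rather than algebraic: one must justify that multiplication by $k_{\xi}$ followed by the projection commutes with the infinite summation. This is routine since $k_{\xi}\in H^{\infty}(\D)$, so multiplication by $k_{\xi}$ is bounded on $L^2(\T)$ and, together with the contraction $P_{H^2(\D)}$, lets me pass from the partial sums of $x$ to the limit by continuity; the reindexed series converges absolutely for each fixed $\xi\in\D$ because $|\xi|<1$. I therefore expect no real obstacle, the result being essentially the single-variable instance of the eigenvector phenomenon $T_x k_{\xi}=\widetilde{x}(\xi)k_{\xi}$ highlighted in the introduction.

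As an alternative that sidesteps the series bookkeeping, I could argue weakly: both sides lie in $H^2(\D)$, so by the reproducing property it suffices to verify $\langle P_{H^2(\D)}(x k_{\xi}),K_{\eta}\rangle=\widetilde{x}(\xi)\langle k_{\xi},K_{\eta}\rangle$ for every $\eta\in\D$. Rewriting the left-hand side as $\langle x k_{\xi},K_{\eta}\rangle_{L^2(\T)}=\langle x,\overline{k_{\xi}}K_{\eta}\rangle_{L^2(\T)}$ and using that $x$ is orthogonal to $H^2(\D)$ reduces the claim to extracting the negative-frequency part of $\overline{k_{\xi}}K_{\eta}$, which recovers the same scalar $\widetilde{x}(\xi)$.
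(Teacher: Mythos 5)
Your argument is correct, but your primary route is genuinely different from the paper's. The paper proves the identity weakly in one line of duality: it tests $P_{H^2(\D)}(xK_{\xi})$ against $K_z$, writes $\langle xK_{\xi},K_z\rangle_{L^2(\T)}=\langle K_{\xi},\overline{x}K_z\rangle$, observes that $\overline{x}K_z\in H^2(\D)$ (precisely because $x\in H^2(\D)^{\perp}$ forces $\overline{x}\in H^2(\D)$, and $K_z\in H^{\infty}$), and then invokes the reproducing property to evaluate $\overline{(\overline{x}K_z)(\xi)}=\widetilde{x}(\xi)\overline{K_z(\xi)}$. Your main proof instead computes the Riesz projection directly on the Fourier side: expand $x$ over negative frequencies and $k_{\xi}$ over nonnegative ones, discard the terms with $k+m<0$, and resum. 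This is more computational but entirely elementary, and it has the virtue of making the scalar $\sum_{k<0}a_k\overline{\xi}^{\,-k}=\widetilde{x}(\xi)$ completely explicit; your handling of the convergence issue (truncate $x$, use boundedness of multiplication by $k_{\xi}\in H^{\infty}$ and of $P_{H^2(\D)}$, and note that $\sum_{k<0}a_k\overline{\xi}^{\,-k}$ converges by Cauchy--Schwarz for $|\xi|<1$) is the right way to legitimize the term-by-term manipulation. Your sketched alternative at the end is essentially the paper's proof in adjoint form: you pair $x$ against $\overline{k_{\xi}}K_{\eta}$ and extract negative frequencies, whereas the paper pairs $K_{\xi}$ against $\overline{x}K_z$ and lets the reproducing property do that extraction for free --- the paper's version avoids any frequency bookkeeping, while yours still requires summing a geometric series to identify the negative-frequency part of $\overline{k_{\xi}}K_{\eta}$.
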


\begin{proof}
Let $z \in \D$. Then
\begin{align*}
\langle P_{H^2(\D)}(xK_{\xi}), K_{z} \rangle_{H^2(\D)} 
&= \langle xK_{\xi}, K_{z} \rangle_{L^2(\T)}  \\
&= \langle K_{\xi}, \overline{x}K_{z} \rangle_{H^2(\D)}.
\end{align*}
Since $x\in H^2(\D)^{\perp}$, $\overline{x}\in H^2(\D)$, and hence 
$\overline{x}K_{z} \in H^2(\D)$. Thus, by reproducing property
\begin{align*}
\langle P_{H^2(\D)}(xK_{\xi}), K_{z} \rangle_{H^2(\D)} 
&= \overline{\widetilde{\overline{x}}(\xi)K_{z}(\xi)} \\
&= \widetilde{x}(\xi)\langle K_{\xi}, K_{z} \rangle_{H^2(\D)}  \\
&= \langle \widetilde{x}(\xi)K_{\xi}, K_{z} \rangle_{H^2(\D)}  \qquad (\text{for all } z \in \D).
\end{align*}
Therefore, $P_{H^2(\D)}(xk_{\xi})=\widetilde{x}(\xi)k_{\xi}$.
\end{proof}

We now state the main result of this section.
 
\begin{thm} \label{thm:my theorem}
Let $f\in H^2(\D^n)^{\alpha}$ and $g\in H^2(\D^n)^{\beta}$ for $\alpha=(\alpha_1,\dots, \alpha_n), \beta=(\beta_1,\dots, \beta_n)\in \{-1, 1\}^{n}$. Then 
\[
\left\langle P_{H^2(\D^n)}(fk_{\bxi}), P_{H^2(\D^n)}(gk_{\bxi}) \right\rangle
\to f\overline{g}\; \text{ radially},
\]
that is,
\[
\left\langle P_{H^2(\D^n)}\left(fk_{(re^{i\theta_1},\dots, re^{i\theta_n})}\right), 
P_{H^2(\D^n)}\left(gk_{(re^{i\theta_1},\dots, re^{i\theta_n})}\right) \right\rangle
\to f\left(e^{i\theta_1},\dots, e^{i\theta_n}\right)
\overline{g}\left(e^{i\theta_1},\dots, e^{i\theta_n}\right)
\]
as $r \to 1^{-}$ a.e.\ $(e^{i\theta_1}, \dots, e^{i\theta_n}) \in \T^n$.
\end{thm}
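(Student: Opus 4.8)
The plan is to exploit the tensor-product structure of everything in sight and reduce the $n$-variable statement to the one-variable computations already packaged in Lemmas \ref{lemma:radial limit lemma} and \ref{lemma:unit disk lemma}. Under the identification $L^2(\T^n)=L^2(\T)^{\otimes n}$ one has $P_{H^2(\D^n)}=P_{H^2(\D)}^{\otimes n}$, the normalised kernel factors as $k_{\bxi}=k_{\xi_1}\otimes\cdots\otimes k_{\xi_n}$, and $H^2(\D^n)^{\alpha}=\otimes_{j=1}^{n}H^2(\D)^{\alpha_j}$. Writing $\Phi_{f,g}(\bxi):=\la P_{H^2(\D^n)}(fk_{\bxi}),P_{H^2(\D^n)}(gk_{\bxi})\ra$ and $P^{\perp}:=P_{H^2(\D^n)^{\perp}}$, I would first record the identity
\[
\Phi_{f,g}(\bxi)=\wt{f\ol{g}}(\bxi)-\Psi_{f,g}(\bxi),\qquad \Psi_{f,g}(\bxi):=\la P^{\perp}(fk_{\bxi}),P^{\perp}(gk_{\bxi})\ra,
\]
which follows from $\la fk_{\bxi},gk_{\bxi}\ra_{L^2(\T^n)}=\wt{f\ol g}(\bxi)$ (Lemma \ref{lemma:radial limit lemma}) and the fact that $P^{\perp}$ is a projection. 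Since $\wt{f\ol g}\to f\ol g$ radially by Lemma \ref{lemma:radial limit lemma}, the whole theorem reduces to proving that the defect $\Psi_{f,g}\to 0$ radially.

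Second, I would establish the conclusion for elementary tensors $f=\otimes_{j=1}^n f_j$, $g=\otimes_{j=1}^n g_j$ with $f_j\in H^2(\D)^{\alpha_j}$, $g_j\in H^2(\D)^{\beta_j}$, where $\Phi_{f,g}(\bxi)=\prod_{j=1}^{n}\la P_{H^2(\D)}(f_jk_{\xi_j}),P_{H^2(\D)}(g_jk_{\xi_j})\ra$. Each factor is computed by the four cases of $(\alpha_j,\beta_j)$: when $\alpha_j=1$ the kernel $k_{\xi_j}\in H^{\infty}(\D)$ keeps $f_jk_{\xi_j}$ in $H^2(\D)$, so $P_{H^2(\D)}(f_jk_{\xi_j})=f_jk_{\xi_j}$, while when $\alpha_j=-1$ Lemma \ref{lemma:unit disk lemma} gives $P_{H^2(\D)}(f_jk_{\xi_j})=\wt{f_j}(\xi_j)k_{\xi_j}$, and likewise for $g_j$. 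Using $\la f_jk_{\xi_j},k_{\xi_j}\ra=\wt{f_j}(\xi_j)$ (Lemma \ref{lemma:radial limit lemma} with the constant $1$) and $\|k_{\xi_j}\|=1$, the $(1,1)$ factor equals $\wt{f_j\ol{g_j}}(\xi_j)$ and the other three factors all equal $\wt{f_j}(\xi_j)\,\ol{\wt{g_j}(\xi_j)}$; both expressions tend radially to $f_j\ol{g_j}$ by Theorem \ref{thm: radial limit theorem}. Taking the product and noting that the set where the $j$-th factor fails to converge is contained in a null cylinder $\{\bzeta\in\T^n:\zeta_j\in E_j\}$ with $E_j\subset\T$ of $\mathfrak m$-measure zero, the product converges radially to $\prod_j f_j\ol{g_j}=f\ol g$ for a.e.\ $\bzeta$. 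By sesquilinearity this extends $\Phi_{f,g}\to f\ol g$ radially to all finite linear combinations of elementary tensors, whence, via the displayed identity, $\Psi_{h,h}\to 0$ radially for every finite combination $h$.

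Third comes the passage to arbitrary $f\in H^2(\D^n)^{\alpha}$ and $g\in H^2(\D^n)^{\beta}$, which I expect to be the \textbf{main obstacle}: radial boundary limits do not commute with $L^2$-approximation, and the naive remedy through a strong maximal function is delicate on the polydisc. I would bypass maximal functions using only the defect and positivity. By Cauchy--Schwarz $|\Psi_{f,g}(\bxi)|\le \Psi_{f,f}(\bxi)^{1/2}\Psi_{g,g}(\bxi)^{1/2}$, so it suffices to prove $\Psi_{f,f}\to 0$ radially. Choosing finite combinations $f^{(m)}\to f$ in $L^2(\T^n)$, the triangle inequality for $\Psi_{f,f}(\bxi)^{1/2}=\|P^{\perp}(fk_{\bxi})\|$ together with $\|P^{\perp}((f-f^{(m)})k_{\bxi})\|\le\|(f-f^{(m)})k_{\bxi}\|_{L^2}=\wt{|f-f^{(m)}|^2}(\bxi)^{1/2}$ gives
\[
\Psi_{f,f}(\bxi)^{1/2}\le \Psi_{f^{(m)},f^{(m)}}(\bxi)^{1/2}+\wt{|f-f^{(m)}|^2}(\bxi)^{1/2}.
\]
Letting $r\to 1^{-}$, using $\Psi_{f^{(m)},f^{(m)}}\to 0$ radially from the second step and the radial convergence $\wt{|f-f^{(m)}|^2}\to |f-f^{(m)}|^2$ from Lemma \ref{lemma:radial limit lemma}, I obtain $\limsup_{r\to1^{-}}\Psi_{f,f}(r\bzeta)^{1/2}\le |f-f^{(m)}|(\bzeta)$ for a.e.\ $\bzeta$ and every $m$. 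Passing to a subsequence with $f^{(m)}\to f$ a.e.\ and intersecting the countably many full-measure sets, the right-hand side tends to $0$ a.e., forcing $\Psi_{f,f}\to 0$ radially. Combined with the Cauchy--Schwarz bound (so $\Psi_{f,g}\to 0$ radially) and the displayed identity, this yields $\Phi_{f,g}\to f\ol g$ radially, as required.
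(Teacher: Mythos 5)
Your proof is correct, and on the dense class of finite sums of elementary tensors it runs parallel to the paper's argument: both exploit the factorization $P_{H^2(\D^n)}=P_{H^2(\D)}^{\otimes n}$, $k_{\bxi}=k_{\xi_1}\otimes\cdots\otimes k_{\xi_n}$ and use Lemma \ref{lemma:unit disk lemma} to replace $P_{H^2(\D)}(f_jk_{\xi_j})$ by $\widetilde{f_j}(\xi_j)k_{\xi_j}$ when $\alpha_j=-1$ (the paper organizes this through the index sets $I=\{j:\alpha_j=-1\}$ and $J=\{j:\beta_j=-1\}$ rather than your four-case table per coordinate, but the computation and the resulting product formula are the same). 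Where you genuinely diverge is the passage from finite combinations to general $f\in H^2(\D^n)^{\alpha}$, $g\in H^2(\D^n)^{\beta}$. The paper approximates $f,g$ in $L^2(\T^n)$ and runs an $\varepsilon/3$ estimate directly on $\Phi_{f,g}(\bxi)=\langle P_{H^2(\D^n)}(fk_{\bxi}),P_{H^2(\D^n)}(gk_{\bxi})\rangle$; its first $\varepsilon/3$ bound requires $\left|\Phi_{f,g}(r\bzeta)-\Phi_{f^{(m)},g^{(m)}}(r\bzeta)\right|<\varepsilon/3$ for all $r$ in the relevant range $(s,1)$ once $m\geq N_1$, a uniform-in-$r$ statement that is not actually justified there: the error is controlled by terms like $\widetilde{|f-f^{(m)}|^2}(r\bzeta)^{1/2}$, whose supremum over $r$ is a radial maximal function and is not obviously small for fixed large $m$. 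Your route sidesteps this entirely. By isolating the defect $\Psi_{f,g}(\bxi)=\langle P_{H^2(\D^n)^{\perp}}(fk_{\bxi}),P_{H^2(\D^n)^{\perp}}(gk_{\bxi})\rangle$ via the identity $\Phi_{f,g}=\widetilde{f\overline{g}}-\Psi_{f,g}$ (immediate from Lemma \ref{lemma:radial limit lemma}), reducing to the diagonal by Cauchy--Schwarz, and using that $f\mapsto\|P_{H^2(\D^n)^{\perp}}(fk_{\bxi})\|$ is subadditive with $\|P_{H^2(\D^n)^{\perp}}((f-f^{(m)})k_{\bxi})\|\leq\widetilde{|f-f^{(m)}|^2}(\bxi)^{1/2}$, you obtain $\limsup_{r\to1^{-}}\Psi_{f,f}(r\bzeta)^{1/2}\leq|f-f^{(m)}|(\bzeta)$ a.e.\ for each $m$, which needs only the radial limit of each fixed Poisson integral (Theorem \ref{thm: radial limit theorem}) together with an a.e.-convergent subsequence. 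This is both more economical and more watertight than the paper's $\varepsilon/3$ chain, at the negligible cost of the bookkeeping identity relating $\Phi$ and $\Psi$; it is essentially the same device the paper itself uses later in the proof of Theorem \ref{thm:Hankel radial limit 0}, pushed back one step to where it is actually needed.
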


\begin{proof}
Suppose that $f\in H^2(\D^n)^{\alpha}$ and $g\in H^2(\D^n)^{\beta}$ for $\alpha=(\alpha_1,\dots, \alpha_n), \beta=(\beta_1,\dots, \beta_n)\in \{-1, 1\}^{n}$.
If either $\alpha=\bm{1}$ or $\beta=\bm{1}$, then
\[
\langle P_{H^2(\D^n)}(fk_{\bxi}), P_{H^2(\D^n)}(gk_{\bxi}) \rangle 
= \langle fk_{\bxi}, gk_{\bxi} \rangle,
\]
and hence $\langle P_{H^2(\D^n)}(fk_{\bxi}), P_{H^2(\D^n)}(gk_{\bxi}) \rangle
\to f\overline{g}$ radially by using Lemma \ref{lemma:radial limit lemma}.

We now assume that $\alpha\neq \bm{1}$ and $\beta\neq \bm{1}$. Firstly, consider 
\[
I=\{j : \alpha_j=-1\} \quad \mbox{and} \quad J=\{j : \beta_j=-1\}.
\]
Let $f=f_1 \otimes\cdots\otimes f_n$ and $g=g_1\otimes \cdots \otimes g_n$, where $f_j\in H^2(\D)^{\alpha_j}$ and $g_j\in H^2(\D)^{\beta_j}$ for all $1\leq j \leq n$. Then
\begin{align*}
\langle P_{H^2(\D^n)}(fK_{\bxi}), K_{\z} \rangle
&= \langle fK_{\bxi}, K_{\z} \rangle_{L^2(\T^n)} \\
&= \langle (f_1 \otimes \cdots \otimes f_n)(K_{\xi_1}\otimes \dots \otimes K_{\xi_n}), K_{z_1}\otimes \cdots \otimes K_{z_n} \rangle_{L^2(\T^n)} \\
&= \langle f_1K_{\xi_1}\otimes \cdots \otimes f_nK_{\xi_n}, K_{z_1}\otimes \cdots \otimes K_{z_n} \rangle_{L^2(\T^n)} \\
&= \prod_{j=1}^{n}\langle f_jK_{\xi_j}, K_{z_j} \rangle_{L^2(\T)} \\
&= \prod_{j=1}^{n}\langle P_{H^2(\D)}(f_jK_{\xi_j}), K_{z_j} \rangle_{H^2(\D)} \\
&= \left(\prod_{j\in I}\widetilde{f_j}(\xi_j)\right)\left(\prod_{j=1}^{n}\langle P_{H^2(\D)}(f_j'K_{\xi_j}), K_{z_j} \rangle_{H^2(\D)}\right), 
\end{align*}
where 
$f_j'=
\begin{cases}
f_j & \text{ if } j\notin I;\\
1 & \text{ if } j\in I.
\end{cases}$, and the last equality follows from Lemma \ref{lemma:unit disk lemma}. Therefore,
\begin{align*}
\langle P_{H^2(\D^n)}(fK_{\bxi}), K_{\z} \rangle
&= \left(\prod_{j\in I}\widetilde{f_j}(\xi_j)\right)\left(\prod_{j=1}^{n}\langle f_j'K_{\xi_j}, K_{z_j} \rangle_{H^2(\D)}\right) \\
&= \left(\prod_{j\in I}\widetilde{f_j}(\xi_j)\right) \left\langle \bigotimes_{j=1}^{n}f_j'K_{\xi_j}, \bigotimes_{j=1}^{n}K_{z_j} \right\rangle \\
&= \left\langle \left(\prod_{j\in I}\widetilde{f_j}(\xi_j)\right)\left( \bigotimes_{j=1}^{n}f_j'K_{\xi_j} \right), K_{\z} \right\rangle,
\end{align*}
for all $\z\in \D^n$. Hence 
\[
P_{H^2(\D^n)}(fk_{\bxi})=\left(\prod_{j\in I}\widetilde{f_j}(\xi_j)\right)\left( \bigotimes_{j=1}^{n}f_j'k_{\xi_j} \right), ~\text{where }
f_j'=
\begin{cases}
f_j & \text{ if } j\notin I;\\
1 & \text{ if } j\in I.
\end{cases}
\]
Similarly, we have
\[
P_{H^2(\D^n)}(gk_{\bxi})=\left(\prod_{j\in J}\widetilde{g_j}(\xi_j)\right)\left( \bigotimes_{j=1}^{n}g_j'k_{\xi_j} \right),~ \text{where }
g_j'=
\begin{cases}
g_j & \text{ if } j\notin J;\\
1 & \text{ if } j\in J.
\end{cases}
\]
Now
\begin{align*}
\langle P_{H^2(\D^n)}(fk_{\bxi}), P_{H^2(\D^n)}(gk_{\bxi}) \rangle 
&= \left\langle \left(\prod_{j\in I}\widetilde{f_j}(\xi_j)\right)\left( \bigotimes_{j=1}^{n}f_j'k_{\xi_j} \right), \left(\prod_{j\in J}\widetilde{g_j}(\xi_j)\right)\left( \bigotimes_{j=1}^{n}g_j'k_{\xi_j} \right)\right\rangle \\
&= \left(\prod_{j\in I}\widetilde{f_j}(\xi_j)\right)\overline{\left(\prod_{j\in J}\widetilde{g_j}(\xi_j)\right)}\left\langle \left( \bigotimes_{j=1}^{n}f_j'k_{\xi_j} \right), \left( \bigotimes_{j=1}^{n}g_j'k_{\xi_j} \right) \right\rangle \\
&= \left(\prod_{j\in I}\widetilde{f_j}(\xi_j)\right)\overline{\left(\prod_{j\in J}\widetilde{g_j}(\xi_j)\right)} \prod_{j=1}^{n}\langle f_j'k_{\xi_j}, g_j'k_{\xi_j} 
\rangle \\
&= \left(\prod_{j\in I}\widetilde{f_j}(\xi_j)\right)\overline{\left(\prod_{j\in J}\widetilde{g_j}(\xi_j)\right)} \prod_{j=1}^{n}\widetilde{(f_j'\overline{g_j'})}(\xi_j) \tag{By Lemma \ref{lemma:radial limit lemma}} \\
\end{align*}
\begin{align*}
&\rightarrow \left(\prod_{j\in I}f_j\right)\overline{\left(\prod_{j\in J}g_j\right)}\left(\prod_{j=1}^{n}f_j'\overline{g_j'}\right)\tag{By Theorem \ref{thm: radial limit theorem}} \\
&= \left(\prod_{j\in I}f_j\right)\overline{\left(\prod_{j\in J}g_j\right)}\left(\prod_{j=1}^{n}f_j'\right)\left(\prod_{j=1}^{n}\overline{g_j'}\right) \\
&= \left(\prod_{j\in I}f_j\right)\left(\prod_{j\in J}\overline{g_j}\right)\left(\prod_{j\in I^c}f_j\right)\left(\prod_{j\in J^c}\overline{g_j}\right) \\
&= \left(\prod_{j=1}^{n}f_j\right)\left(\prod_{j=1}^{n}\overline{g_j}\right) \\
&= f\overline{g}.
\end{align*}
This proves that
\[
\langle P_{H^2(\D^n)}(fk_{\bxi}), P_{H^2(\D^n)}(gk_{\bxi}) \rangle \rightarrow f\overline{g} ~ \text{ radially}. 
\]

Suppose that $f$ is a finite sum of elements of the form $f_1\otimes \cdots \otimes f_n$, where $f_j\in H^2(\D)^{\alpha_j}$, and $g$ is a finite sum of elements of the form $g_1\otimes \cdots \otimes g_n$, where $g_j\in H^2(\D)^{\beta_j}$ 
for all $1\leq j \leq n$. That is,
\[
f=\sum_{p=1}^{l}(f_{1p}\otimes \cdots \otimes f_{np})\; \text{ and }\; 
g=\sum_{q=1}^{l'}(g_{1q}\otimes  \cdots \otimes g_{nq}),
\]
where $f_{jp}\in H^2(\D)^{\alpha_j}$ and $g_{jq}\in H^2(\D)^{\beta_j}$ for all $1\leq j\leq n$. Therefore, 

$\langle P_{H^2(\D^n)}(fk_{\bxi}), P_{H^2(\D^n)}(gk_{\bxi}) \rangle$
\begin{align*}
&= \left\langle P_{H^2(\D^n)}\left(\left(\sum_{p=1}^{l}(f_{1p}\otimes \cdots \otimes f_{np})\right)k_{\bxi}\right), P_{H^2(\D^n)}\left(\left(\sum_{q=1}^{l'}(g_{1q}\otimes \cdots \otimes g_{nq})\right)k_{\bxi}\right) \right\rangle \\
&= \left\langle \sum_{p=1}^{l}P_{H^2(\D^n)}((f_{1p}\otimes \cdots \otimes f_{np})k_{\bxi}), \sum_{q=1}^{l'}P_{H^2(\D^n)}((g_{1q}\otimes \cdots \otimes g_{nq})k_{\bxi})\right\rangle \\
&= \sum_{p=1}^{l}\sum_{q=1}^{l'} \langle P_{H^2(\D^n)}((f_{1p}\otimes  \cdots \otimes f_{np})k_{\bxi}), P_{H^2(\D^n)}((g_{1q}\otimes \cdots \otimes g_{nq})k_{\bxi})\rangle.
\end{align*}
From the above, we obtain that
\begin{align*}
\langle P_{H^2(\D^n)}(fk_{\bxi}), P_{H^2(\D^n)}(gk_{\bxi}) \rangle
&\to \sum_{p=1}^{l}\sum_{q=1}^{l'}
(f_{1p}\otimes \cdots \otimes f_{np}) \overline{(g_{1q}\otimes \cdots \otimes g_{nq})} 
= f\overline{g} ~ \text{ radially. }
\end{align*}

Finally, assume that $f\in H^2(\D)^{\alpha_1}\otimes \cdots\otimes H^2(\D)^{\alpha_n}$ 
and $g\in H^2(\D)^{\beta_1}\otimes \cdots \otimes H^2(\D)^{\beta_n}$. Then we shall show that 
\[
\langle P_{H^2(\D^n)}(fk_{\bxi}), P_{H^2(\D^n)}(gk_{\bxi}) \rangle \to f\overline{g}\; \text{ radially. }
\] 
Now there exist sequences $(f^{(m)})_{m=1}^{\infty}$ and $(g^{(m)})_{m=1}^{\infty}$ 
such that $\|f^{(m)}-f\|_{L^2(\T^n)} \to 0$ and $\|g^{(m)}-g\|_{L^2(\T^n)} \to 0$ as $m\to \infty$, where each $f^{(m)}$ and $g^{(m)}$ are the finite sums of elements of the form $f_1\otimes \cdots \otimes f_n$ and $g_1\otimes \cdots \otimes g_n$, respectively, and $f_{j}\in H^2(\D)^{\alpha_j}$ and $g_j\in H^2(\D)^{\beta_j}$ for all $1\leq j\leq n$. 
Since $f^{(m)}\to f$ in $L^2(\T^n)$, there exists a subsequence $\left(f^{(m_t)}\right)_{t=1}^{\infty}$ of $\left(f^{(m)}\right)_{m=1}^{\infty}$ which converges to $f$ pointwise a.e. on $\T^n$. Similarly, there exists a subsequence $\left(g^{(m_t)}\right)_{t=1}^{\infty}$ of $\left(g^{(m)}\right)_{m=1}^{\infty}$ such that $\left(g^{(m_t)}\right)_{t=1}^{\infty}$ converges to $g$ pointwise a.e. on $\T^n$. Thus
\[
f^{(m_t)}\overline{g^{(m_t)}} \to f\overline{g} \text{ pointwise a.e.\ } (e^{i\theta_1},\dots, e^{i\theta_n})\in \T^n.
\]
Suppose $E_0$ is a measurable null subset of $\T^n$ such that 
\[
f^{(m_t)}(e^{i\theta_1},\dots, e^{i\theta_n})\overline{g^{(m_t)}}(e^{i\theta_1},\dots, e^{i\theta_n}) \to f(e^{i\theta_1},\dots, e^{i\theta_n})\overline{g}(e^{i\theta_1},\dots, e^{i\theta_n}) 
\]
as $t\to \infty$ for all $(e^{i\theta_1},\dots, e^{i\theta_n})\in \T^n\setminus E_0$.
We have also shown that for each $m \geq 1$, 
\[
\left\langle P_{H^2(\D^n)}\left(f^{(m)}k_{\bxi}\right), P_{H^2(\D^n)}\left(g^{(m)}k_{\bxi}\right) \right\rangle
\to f^{(m)}\overline{g^{(m)}}\; \text{ radially. }
\]
That means there exists a measurable null subset $E_m$ of $\T^n$ such that
\[
\left\langle P_{H^2(\D^n)}\left(f^{(m)}k_{(re^{i\theta_1},\dots, re^{i\theta_n})}\right), P_{H^2(\D^n)}\left(g^{(m)}k_{(re^{i\theta_1},\dots, re^{i\theta_n})}\right) \right\rangle \to f^{(m)}(e^{i\theta_1},\dots, e^{i\theta_n})\overline{g^{(m)}}(e^{i\theta_1},\dots, e^{i\theta_n})
\]
as $r\to 1^{-}$, and $(e^{i\theta_1},\dots, e^{i\theta_n})\in \T^n\setminus E_m$.
Let $E=\bigcup_{m=0}^{\infty}E_{m}$. Then $E$ is clearly a measurable null subset of 
$\T^n$. Also $f^{(m)}k_{\bxi} \to fk_{\bxi}$ and $g^{(m)}k_{\bxi} \to gk_{\bxi}$ in $L^2(\T^n)$ because $k_{\bxi}\in L^{\infty}(\T^n)$. Hence
\[
P_{H^2(\D^n)}\left(f^{(m)}k_{\bxi}\right) \to P_{H^2(\D^n)}\left(fk_{\bxi}\right)
\; \text{ and }\; 
P_{H^2(\D^n)}\left(g^{(m)}k_{\bxi}\right) \to P_{H^2(\D^n)}\left(gk_{\bxi}\right)
\text{ in } H^2(\D^n).
\]
as  $m\to \infty$. Let $\varepsilon > 0$ be any number and fix $(e^{i\theta_1},\dots, e^{i\theta_n})\in \T^n\setminus E$. Then there exists $N_1\in \N$ such that for $m\geq N_1$ 

$|\left\langle P_{H^2(\D^n)}\left(fk_{(re^{i\theta_1},\dots, re^{i\theta_n})}\right), 
		P_{H^2(\D^n)}\left(gk_{(re^{i\theta_1},\dots, re^{i\theta_n})}\right) \right\rangle - $ 
\vspace{-5pt}
\[
\hspace{5cm}\left\langle P_{H^2(\D^n)}\left(f^{(m)}k_{(re^{i\theta_1},\dots, re^{i\theta_n})}\right), P_{H^2(\D^n)}\left(g^{(m)}k_{(re^{i\theta_1},\dots, re^{i\theta_n})}\right) \right\rangle|<\frac{\varepsilon}{3}.
\]
Now choose $M>N_1$ such that
\[
\left| f^{(m_M)}(e^{i\theta_1},\dots, e^{i\theta_n})\overline{g^{(m_M)}}(e^{i\theta_1},\dots, e^{i\theta_n}) - 
f(e^{i\theta_1},\dots, e^{i\theta_n})\overline{g}(e^{i\theta_1},\dots, e^{i\theta_n})\right| <\frac{\varepsilon}{3}.
\]  
Also there exists $s\in (0, 1)$ such that

$| \left\langle P_{H^2(\D^n)}\left(f^{(m_M)}k_{(re^{i\theta_1},\dots, re^{i\theta_n})}\right), P_{H^2(\D^n)}\left(g^{(m_M)}k_{(re^{i\theta_1},\dots, re^{i\theta_n})}\right) \right\rangle - $
\vspace{-5pt}
\[
\hspace{5cm} f^{(m_M)}(e^{i\theta_1},\dots, e^{i\theta_n})\overline{g^{(m_M)}}(e^{i\theta_1},\dots, e^{i\theta_n})| < \frac{\varepsilon}{3}
\]
for all $r\in (s, 1)$. Note that
		
$\left|\left\langle P_{H^2(\D^n)}\left(fk_{(re^{i\theta_1},\dots, re^{i\theta_n})}\right), 
P_{H^2(\D^n)}\left(gk_{(re^{i\theta_1},\dots, re^{i\theta_n})}\right) \right\rangle - 
f(e^{i\theta_1},\dots, e^{i\theta_n})\overline{g}(e^{i\theta_1},\dots, e^{i\theta_n})\right|$
\begin{align*}
&\leq |\left\langle P_{H^2(\D^n)}\left(fk_{(re^{i\theta_1},\dots, re^{i\theta_n})}\right), 
P_{H^2(\D^n)}\left(gk_{(re^{i\theta_1},\dots, re^{i\theta_n})}\right) \right\rangle \\
& \hspace{1cm} -\left\langle P_{H^2(\D^n)}\left(f^{(m_M)}k_{(re^{i\theta_1},\dots, re^{i\theta_n})}\right), P_{H^2(\D^n)}\left(g^{(m_M)}k_{(re^{i\theta_1},\dots, re^{i\theta_n})}\right) \right\rangle|\\
&+ | \left\langle P_{H^2(\D^n)}\left(f^{(m_M)}k_{(re^{i\theta_1},\dots, re^{i\theta_n})}\right), P_{H^2(\D^n)}\left(g^{(m_M)}k_{(re^{i\theta_1},\dots, re^{i\theta_n})}\right) \right\rangle \\
& \hspace{1cm} -f^{(m_M)}(e^{i\theta_1},\dots, e^{i\theta_n})\overline{g^{(m_M)}}(e^{i\theta_1},\dots, e^{i\theta_n})| \\
&+ \left| f^{(m_M)}(e^{i\theta_1},\dots, e^{i\theta_n})\overline{g^{(m_M)}}(e^{i\theta_1},\dots, e^{i\theta_n}) - f(e^{i\theta_1},\dots, e^{i\theta_n})\overline{g}(e^{i\theta_1},\dots, e^{i\theta_n})\right|.
\end{align*}
Hence
\begin{align*}
|\left\langle P_{H^2(\D^n)}\left(fk_{(re^{i\theta_1},\dots, re^{i\theta_n})}\right),
P_{H^2(\D^n)}\left(gk_{(re^{i\theta_1},\dots, re^{i\theta_n})}\right) \right\rangle & \\ 
&\hspace{-3cm} -f(e^{i\theta_1},\dots, e^{i\theta_n})\overline{g}(e^{i\theta_1},\dots, e^{i\theta_n})| < \frac{\varepsilon}{3} + \frac{\varepsilon}{3} + \frac{\varepsilon}{3}
= \varepsilon
\end{align*}
whenever $r\in (s, 1)$. 

This completes the proof.
\end{proof}

We have an immediate consequence of the above result as follows.

\begin{cor}\label{cor:my corollary}
If $f,g\in L^2(\T^n)$, then 
\[
\left\langle P_{H^2(\D^n)}(fk_{\bxi}), P_{H^2(\D^n)}(gk_{\bxi}) \right\rangle
\to f\overline{g}\; \text{ radially.}
\]
\end{cor}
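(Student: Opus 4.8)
The plan is to reduce the general case $f,g\in L^2(\T^n)$ to the orthogonal pieces already settled in Theorem \ref{thm:my theorem}. The starting observation is that the standard orthonormal basis $\{e_{\K}\}_{\K\in\Z^n}$ of $L^2(\T^n)$ splits according to the sign pattern of the multi-index: every $\K=(k_1,\dots,k_n)\in\Z^n$ has, in each coordinate, either $k_j\geq 0$ or $k_j<0$, so it belongs to exactly one of the subspaces $H^2(\D^n)^{\alpha}$ with $\alpha\in\{-1,1\}^n$. Consequently one has the finite orthogonal decomposition
\[
L^2(\T^n)=\bigoplus_{\alpha\in\{-1,1\}^n}H^2(\D^n)^{\alpha}.
\]

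First I would use this decomposition to write $f=\sum_{\alpha\in\{-1,1\}^n}f_{\alpha}$ and $g=\sum_{\beta\in\{-1,1\}^n}g_{\beta}$, where $f_{\alpha}\in H^2(\D^n)^{\alpha}$ and $g_{\beta}\in H^2(\D^n)^{\beta}$; both are finite sums of $2^n$ terms. Since the map $h\mapsto P_{H^2(\D^n)}(hk_{\bxi})$ is linear for each fixed $\bxi\in\D^n$ and the inner product is sesquilinear, expanding gives
\[
\langle P_{H^2(\D^n)}(fk_{\bxi}), P_{H^2(\D^n)}(gk_{\bxi})\rangle
=\sum_{\alpha}\sum_{\beta}\langle P_{H^2(\D^n)}(f_{\alpha}k_{\bxi}), P_{H^2(\D^n)}(g_{\beta}k_{\bxi})\rangle.
\]
To each summand Theorem \ref{thm:my theorem} applies directly (with $f_{\alpha}\in H^2(\D^n)^{\alpha}$ and $g_{\beta}\in H^2(\D^n)^{\beta}$), yielding $\langle P_{H^2(\D^n)}(f_{\alpha}k_{\bxi}), P_{H^2(\D^n)}(g_{\beta}k_{\bxi})\rangle\to f_{\alpha}\overline{g_{\beta}}$ radially.

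Finally, because the double sum is finite, I would combine these radial limits: for each pair $(\alpha,\beta)$ radial convergence holds off a null set $E_{\alpha,\beta}\subset\T^n$, and on the complement of the (still null) finite union $\bigcup_{\alpha,\beta}E_{\alpha,\beta}$ all $2^{2n}$ limits hold simultaneously. Adding them,
\[
\langle P_{H^2(\D^n)}(fk_{\bxi}), P_{H^2(\D^n)}(gk_{\bxi})\rangle
\to \sum_{\alpha}\sum_{\beta}f_{\alpha}\overline{g_{\beta}}
=\Big(\sum_{\alpha}f_{\alpha}\Big)\overline{\Big(\sum_{\beta}g_{\beta}\Big)}
=f\overline{g}\quad\text{radially.}
\]
There is no serious obstacle here: the corollary is essentially a bookkeeping step built on Theorem \ref{thm:my theorem}. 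The only point requiring minor care is the first one — verifying that the subspaces $H^2(\D^n)^{\alpha}$ genuinely exhaust $L^2(\T^n)$ and are mutually orthogonal, so that the decompositions of $f$ and $g$ are legitimate and finite — after which linearity and the finiteness of the index set do the rest.
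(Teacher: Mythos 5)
Your proposal is correct and follows essentially the same route as the paper: decompose $L^2(\T^n)=\bigoplus_{\alpha\in\{-1,1\}^n}H^2(\D^n)^{\alpha}$, expand the inner product by sesquilinearity into the finite double sum, and apply Theorem \ref{thm:my theorem} to each of the $2^{2n}$ terms. The only addition is your explicit remark about taking the finite union of exceptional null sets, which the paper leaves implicit.
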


\begin{proof}
Note that
\[
L^2(\T^n)=\bigoplus_{\alpha\in\{-1, 1\}^n}\negthickspace\negthickspace H^2(\D^n)^{\alpha}.
\]
Let $f, g \in L^2(\T^n)$ such that
\[
f=\bigoplus_{\alpha\in\{-1,1\}^n}\negthickspace\negthickspace f_{\alpha} 
\quad \text{ and } \quad
g=\bigoplus_{\beta\in\{-1,1\}^n}\negthickspace\negthickspace g_{\beta},
\]
where $f_{\alpha}\in H^2(\D^n)^{\alpha}$ and $g_{\beta}\in H^2(\D^n)^{\beta}$ for all 
$\alpha,\beta\in\{-1,1\}^n$. Therefore,
\begin{align*}
\left\langle P_{H^2(\D^n)}(fk_{\bxi}), P_{H^2(\D^n)}(gk_{\bxi}) \right\rangle
&= \sum_{\alpha\in\{-1,1\}^n}\sum_{\beta\in\{-1,1\}^n}\left\langle P_{H^2(\D^n)}    
(f_{\alpha}k_{\bxi}), P_{H^2(\D^n)}(g_{\beta}k_{\bxi}) \right\rangle\\
&\to \sum_{\alpha\in\{-1,1\}^n}\sum_{\beta\in\{-1,1\}^n}f_{\alpha}\overline{g_{\beta}}=f\overline{g} \quad \text{ radially}, 
\end{align*}
which follows from Theorem \ref{thm:my theorem}.
\end{proof}

\section{Symbol map for $H^2(\D^n)$}

In this section we shall discuss about the \emph{symbol map} on the Toeplitz algebra on 
$H^2(\D^n)$. Before that, we need some results in the sequel.

\begin{thm}\label{thm:Hankel radial limit 0}
For $\phi\in L^{\infty}(\T^n)$, $\|H_{\phi}k_{\bxi}\| \to 0$ radially. Moreover, if 
$\phi, \psi\in L^{\infty}(\T^n)$, then $\widetilde{[T_{\phi}, T_{\psi})} \to 0$ radially. 
\end{thm}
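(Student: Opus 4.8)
The plan is to handle the two assertions in turn, with the first feeding directly into the second. The engine throughout is the pair of radial-limit results already established, namely Lemma \ref{lemma:radial limit lemma} and Corollary \ref{cor:my corollary}.

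For the Hankel norm I would exploit the Pythagorean identity coming from the orthogonal decomposition $L^2(\T^n) = H^2(\D^n)\oplus H^2(\D^n)^{\perp}$. Since $H_{\phi}k_{\bxi} = P_{H^2(\D^n)^{\perp}}(\phi k_{\bxi})$, I write
\[
\|H_{\phi}k_{\bxi}\|^2 = \|\phi k_{\bxi}\|_{L^2(\T^n)}^2 - \|P_{H^2(\D^n)}(\phi k_{\bxi})\|^2,
\]
and then compute the radial limit of each term on the right separately. For the first term, Lemma \ref{lemma:radial limit lemma} with $f=g=\phi$ gives $\|\phi k_{\bxi}\|_{L^2(\T^n)}^2 = \la \phi k_{\bxi}, \phi k_{\bxi}\ra_{L^2(\T^n)} = \widetilde{(|\phi|^2)}(\bxi)\to |\phi|^2$ radially. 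For the second term, Corollary \ref{cor:my corollary} (again with $f=g=\phi$, legitimate since $\phi\in L^{\infty}(\T^n)\subset L^2(\T^n)$) gives $\|P_{H^2(\D^n)}(\phi k_{\bxi})\|^2\to |\phi|^2$ radially. Intersecting the two full-measure sets on which these limits hold, the difference tends to $0$, so $\|H_{\phi}k_{\bxi}\|^2\to 0$ radially, whence $\|H_{\phi}k_{\bxi}\|\to 0$ radially.

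For the semicommutator I would use the factorization $[T_{\phi}, T_{\psi}) = H_{\overline{\phi}}^{*}H_{\psi}$ recorded in Section \ref{Sec:Preliminaries}. The Berezin transform then becomes
\[
\widetilde{[T_{\phi}, T_{\psi})}(\bxi) = \la H_{\overline{\phi}}^{*}H_{\psi}k_{\bxi}, k_{\bxi}\ra = \la H_{\psi}k_{\bxi}, H_{\overline{\phi}}k_{\bxi}\ra,
\]
so Cauchy--Schwarz yields $|\widetilde{[T_{\phi}, T_{\psi})}(\bxi)| \leq \|H_{\psi}k_{\bxi}\|\,\|H_{\overline{\phi}}k_{\bxi}\|$. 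Since $\|H_{\overline{\phi}}k_{\bxi}\|\to 0$ radially by the first part (applied to the symbol $\overline{\phi}$) while $\|H_{\psi}k_{\bxi}\|\leq \|H_{\psi}\|$ stays bounded (because $\|k_{\bxi}\|=1$), the product tends to $0$ radially, which is exactly $\widetilde{[T_{\phi}, T_{\psi})}\to 0$ radially.

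The genuine content sits entirely in the first part, and more precisely in the coincidence of the two radial limits at the common value $|\phi|^2$. The limit of $\|\phi k_{\bxi}\|^2_{L^2(\T^n)}$ is elementary, being the Poisson integral of $|\phi|^2$; but the limit of the compression $\|P_{H^2(\D^n)}(\phi k_{\bxi})\|^2$ is precisely what Corollary \ref{cor:my corollary}, resting on Theorem \ref{thm:my theorem}, was built to supply. Without that result one could not see that projecting onto $H^2(\D^n)$ recovers the same boundary value $|\phi|^2$ and hence annihilates the Hankel norm in the limit. So the main obstacle has effectively been discharged upstream, and the only care needed here is the routine almost-everywhere bookkeeping of taking a common null set for the finitely many limits invoked.
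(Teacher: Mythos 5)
Your proof is correct and follows essentially the same route as the paper: the Pythagorean identity for the decomposition $L^2(\T^n)=H^2(\D^n)\oplus H^2(\D^n)^{\perp}$ combined with Lemma \ref{lemma:radial limit lemma} and Corollary \ref{cor:my corollary}, then the factorization $[T_{\phi},T_{\psi})=H_{\overline{\phi}}^{*}H_{\psi}$ with Cauchy--Schwarz. The only (harmless) difference is that you apply the identity directly to $\phi$, whereas the paper first replaces $\phi$ by $y=P_{H^2(\D^n)^{\perp}}\phi$ before doing so; since Corollary \ref{cor:my corollary} holds for arbitrary $L^2(\T^n)$ symbols, that preliminary reduction is not needed.
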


\begin{proof}
Suppose $\phi\in L^{\infty}(\T^n)$. Then by definition
\begin{align*}
H_{\phi}k_{\bxi} &= P_{H^2(\D^n)^\perp}(\phi k_{\bxi})\\
&= P_{H^2(\D^n)^\perp}\left((P_{H^2(\D^n)}\phi+ P_{H^2(\D^n)^\perp}\phi)k_{\bxi}\right)\\
&= P_{H^2(\D^n)^\perp}\left((P_{H^2(\D^n)}\phi)k_{\bxi}\right) +P_{H^2(\D^n)^\perp}\left((P_{H^2(\D^n)^\perp}\phi)k_{\bxi}\right).
\end{align*}
Since $k_{\bxi}\in H^{\infty}(\D^n)$, clearly $(P_{H^2(\D^n)}\phi)k_{\bxi}\in H^2(\D^n)$.
Therefore, 
\[
H_{\phi}k_{\bxi}=P_{H^2(\D^n)^\perp}\left((P_{H^2(\D^n)^\perp}\phi)k_{\bxi}\right).
\]
Set $y=P_{H^2(\D^n)^\perp}\phi$. Then
\[
H_{\phi}k_{\bxi}=P_{H^2(\D^n)^\perp}(yk_{\bxi}).
\]
Again
\[ 
yk_{\bxi}=P_{H^2(\D^n)}(yk_{\bxi})+P_{H^2(\D^n)^\perp}(yk_{\bxi}).
\] 
Thus
$\|yk_{\bxi}\|^2=\|P_{H^2(\D^n)}(yk_{\bxi})\|^2+\|P_{H^2(\D^n)^\perp}(yk_{\bxi})\|^2$,
and hence
\[
\|H_{\phi}k_{\bxi}\|^2=\|yk_{\bxi}\|^2-\|P_{H^2(\D^n)}(yk_{\bxi})\|^2.
\]
Now Lemma \ref{lemma:radial limit lemma} yields	
\[
\|yk_{\bxi}\|^2=\langle yk_{\bxi}, yk_{\bxi} \rangle = \widetilde{|y|^2}(\bxi). 
\]
Therefore,
\begin{equation} \label{eqn:Hankel radial limit 0}
\|H_{\phi}k_{\bxi}\|^2=\widetilde{|y|^2}(\bxi)-\|P_{H^2(\D^n)}(yk_{\bxi})\|^2.
\end{equation}
Now Corollary \ref{cor:my corollary} yields
\[
\|P_{H^2(\D^n)}(yk_{\bxi})\|^2 \to |y|^2 \quad \text{radially}.
\] 
Hence, from equation \eqref{eqn:Hankel radial limit 0}, we conclude that
\[
\|H_{\phi}k_{\bxi}\| \to 0 \quad \text{ radially}.
\]

Note that
\begin{align*}
\left|\widetilde{[T_{\phi}, T_{\psi})}(\bxi)\right| 
= \left|\langle H_{\overline{\phi}}^{*}H_{\psi}k_{\bxi}, k_{\bxi} \rangle\right| 
= \left|\langle H_{\psi}k_{\bxi}, H_{\overline{\phi}}k_{\bxi} \rangle\right| 
\leq \|H_{\psi}k_{\bxi}\|\,\|H_{\overline{\phi}}k_{\bxi}\| \to 0 \quad \text{ radially.}
\end{align*}
This finishes the proof of the second part.
\end{proof}

\begin{thm} \label{thm:zero radial limit of operators in semicommutator ideal}
Let $S\in \Sscr$. Then $\widetilde{S} \to 0$ radially.
\end{thm}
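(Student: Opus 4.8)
The plan is to realise $\Sscr$ as the closed linear span of the elementary products $A\,[T_\phi,T_\psi)\,B$ with $A,B\in\Tscr(L^\infty(\T^n))$ and $\phi,\psi\in L^\infty(\T^n)$ (these span a two-sided ideal because $\Tscr(L^\infty(\T^n))$ is unital, containing $T_1=I$, and it is the smallest closed one containing the semicommutators), and then to show that the class
\[
\clz=\{\,T\in\B(H^2(\D^n)):\widetilde{T}\to 0\text{ radially}\,\}
\]
is a norm-closed linear subspace containing every such product; this forces $\Sscr\subseteq\clz$, which is exactly the assertion. That $\clz$ is a linear subspace is immediate, since a finite sum of radially-vanishing functions still tends to $0$ radially a.e. That it is norm-closed follows from the uniform bound $|\widetilde{T}(\bxi)-\widetilde{T_m}(\bxi)|\le\|T-T_m\|$: if $T_m\to T$ in norm with each $\widetilde{T_m}\to0$ radially on a full-measure set $G_m$, then for $\bzeta$ in the countable intersection $\bigcap_m G_m$ one has $\limsup_{r\to1^-}|\widetilde{T}(r\bzeta)|\le\|T-T_m\|$ for every $m$, so letting $m\to\infty$ gives $0$.

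The heart of the argument is the following strengthening of Theorem \ref{thm:Hankel radial limit 0}: for every $B\in\Tscr(L^\infty(\T^n))$ and every $\psi\in L^\infty(\T^n)$,
\[
\|H_\psi B k_{\bxi}\|\to 0\ \text{ radially}.
\]
First I would record the Hankel identity $H_\psi T_\eta f=H_{\psi\eta}f-P_{H^2(\D^n)^\perp}\!\big(\psi\,H_\eta f\big)$, valid for $f\in H^2(\D^n)$ and $\eta\in L^\infty(\T^n)$, which is checked by splitting $\eta f=P_{H^2(\D^n)}(\eta f)+P_{H^2(\D^n)^\perp}(\eta f)$. Using it I would prove by induction on $m$ that $\|H_\psi\,T_{\eta_1}\cdots T_{\eta_m}k_{\bxi}\|\to0$ radially for all symbols and all $\psi$: the case $m=0$ is Theorem \ref{thm:Hankel radial limit 0}, while for the step the identity gives
\[
H_\psi T_{\eta_1}\big(T_{\eta_2}\cdots T_{\eta_m}k_{\bxi}\big)=H_{\psi\eta_1}\big(T_{\eta_2}\cdots T_{\eta_m}k_{\bxi}\big)-P_{H^2(\D^n)^\perp}\!\Big(\psi\,H_{\eta_1}\big(T_{\eta_2}\cdots T_{\eta_m}k_{\bxi}\big)\Big),
\]
whose two terms vanish radially by the induction hypothesis applied with the symbols $\psi\eta_1$ and $\eta_1$ (the second term being controlled by $\|\psi\|_\infty\,\|H_{\eta_1}T_{\eta_2}\cdots T_{\eta_m}k_{\bxi}\|$). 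Linearity extends this to finite linear combinations of such products, and the same norm-closure argument as above, now applied to $A\mapsto\|H_\psi A k_{\bxi}\|$ via $\|H_\psi(A-A_m)k_{\bxi}\|\le\|H_\psi\|\,\|A-A_m\|$, pushes it to all $A\in\Tscr(L^\infty(\T^n))$.

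With this lemma the elementary products are handled directly. Using $[T_\phi,T_\psi)=H_{\overline{\phi}}^{*}H_\psi$ and moving $A$ and $H_{\overline{\phi}}^{*}$ across the inner product,
\[
\big|\widetilde{A[T_\phi,T_\psi)B}(\bxi)\big|=\big|\langle H_\psi Bk_{\bxi},\,H_{\overline{\phi}}A^{*}k_{\bxi}\rangle\big|\le\|H_\psi Bk_{\bxi}\|\,\|H_{\overline{\phi}}A^{*}k_{\bxi}\|\le\|H_\psi Bk_{\bxi}\|\,\|H_{\overline{\phi}}\|\,\|A\|.
\]
Since $B\in\Tscr(L^\infty(\T^n))$, the lemma forces $\|H_\psi Bk_{\bxi}\|\to0$ radially while the remaining factor stays bounded, so $\widetilde{A[T_\phi,T_\psi)B}\to0$ radially, i.e.\ $A[T_\phi,T_\psi)B\in\clz$. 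As $\clz$ is a norm-closed subspace containing all these generators, it contains $\Sscr$, completing the proof. I expect the main obstacle to be precisely the inductive lemma $\|H_\psi Bk_{\bxi}\|\to0$: the subtlety is that $k_{\bxi}$ sits on the \emph{right}, so Theorem \ref{thm:Hankel radial limit 0} cannot be invoked directly, and one must thread products through the Hankel identity while tracking the finitely many (hence still null) exceptional sets produced at each inductive stage before passing to the norm closure.
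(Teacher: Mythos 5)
Your proof is correct, and it takes a genuinely different route from the paper at the decisive step. Both arguments reduce, by linearity and the norm-closedness of the class of operators whose Berezin transform vanishes radially, to the elementary products $A[T_{\phi},T_{\psi})B$ with $A,B\in\Tscr(L^{\infty}(\T^n))$ (the paper leaves the closure step implicit; you spell it out, correctly handling the countable union of exceptional null sets). The divergence is in how the factors $A$ and $B$ are disposed of. The paper uses the purely algebraic identity $T_w[T_u,T_v)=-[T_w,T_{uv})+[T_{wu},T_v)+[T_w,T_u)T_v$ to absorb all \emph{left} Toeplitz factors into new semicommutators, reducing everything to $S=[T_{\phi},T_{\psi})A=H_{\overline{\phi}}^{*}H_{\psi}A$ with $A$ on the \emph{right}; then
\[
|\widetilde{S}(\bxi)|=\bigl|\langle H_{\psi}Ak_{\bxi},\,H_{\overline{\phi}}k_{\bxi}\rangle\bigr|\le\|H_{\psi}A\|\,\|H_{\overline{\phi}}k_{\bxi}\|,
\]
so only the base estimate of Theorem \ref{thm:Hankel radial limit 0} is ever needed. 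You instead keep both factors and prove the strictly stronger analytic lemma that $\|H_{\psi}Bk_{\bxi}\|\to0$ radially for every $B\in\Tscr(L^{\infty}(\T^n))$, by induction on the length of a product of Toeplitz operators via the identity $H_{\psi}T_{\eta}f=H_{\psi\eta}f-P_{H^2(\D^n)^{\perp}}(\psi\,H_{\eta}f)$; the identity, the inductive step, the passage to the norm closure, and the final Cauchy--Schwarz estimate all check out. What your route buys is the stronger intermediate statement --- note that $\|H_{\psi}Bk_{\bxi}\|^{2}=\widetilde{B^{*}[T_{\overline{\psi}},T_{\psi})B}(\bxi)$, so your lemma is itself an instance of the theorem and is close in spirit to Proposition \ref{prop:B is ideal w.r.to Toeplitz} on the invariance of $\Bscr(\psi,\bm{0})$ under Toeplitz multiplication --- at the cost of an extra induction; the paper's algebraic rearrangement is shorter precisely because it sidesteps the difficulty you correctly identify, namely that $k_{\bxi}$ sits to the right of $B$.
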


\begin{proof}
Note that the set of all finite sums of operators of the form $A[T_{\phi}, T_{\psi})B$, where $A, B\in \Tscr(L^{\infty}(\T^n))$, forms a dense subset of $\Sscr$. Now the set of all finite sums of operators of the form $T_{\phi_1}T_{\phi_2}\cdots T_{\phi_n}$ is dense in $\Tscr(L^{\infty}(\T^n))$ and hence the span of the operators of the form
\[
T_{\phi_1}T_{\phi_2}\cdots T_{\phi_n}[T_{\phi}, T_{\psi})T_{\psi_1}T_{\psi_2}\cdots T_{\psi_m}
\]
form a dense subset of $\Sscr$. Therefore, it suffices to prove for $S=T_{\phi_1}T_{\phi_2}\cdots T_{\phi_n}[T_{\phi}, T_{\psi})T_{\psi_1}T_{\psi_2}\cdots T_{\psi_m}$, where $\phi, \phi_i, \psi, \psi_i\in L^{\infty}(\T^n)$. Again for any symbol 
$u, v, w \in L^{\infty}(\T^n)$,
\begin{align*}
T_w[T_u, T_v) &= T_wT_{uv}-T_wT_uT_v \notag \\
&= (T_w T_{uv}-T_{wuv}) + (T_{wuv}-T_{wu}T_v) + (T_{wu}T_v-T_wT_uT_v) \notag \\
&= -[T_w, T_{uv}) + [T_{wu}, T_v) + [T_w, T_u)T_v. 
\end{align*}
Thus, it is enough to assume that $S\in \Sscr$ is of the form 
\[
S=[T_{\phi}, T_{\psi})A = H_{\overline{\phi}}^{*}H_{\psi}A \qquad 
(A\in \Tscr(L^{\infty}(\T^n)),  \phi, \psi\in L^{\infty}(\T^n)).
\]
Therefore,
\begin{align*}
\left| \widetilde{S}(\bxi) \right| 
&= \left|\langle H_{\overline{\phi}}^{*}H_{\psi}Ak_{\bxi}, k_{\bxi} \rangle\right| \\
&= \left|\langle H_{\psi}Ak_{\bxi}, H_{\overline{\phi}}k_{\bxi} \rangle\right| \\
&= \| H_{\psi}Ak_{\bxi}\|\, \|H_{\overline{\phi}}k_{\bxi}\| \\
&\leq \|H_{\psi}A\|\, \|H_{\overline{\phi}}k_{\bxi}\| \to 0 \quad \text{ radially, }
\end{align*}
which follows from Theorem \ref{thm:Hankel radial limit 0}. 
\end{proof}

We are now in a position to state the main result of this section.
	
\begin{thm} \label{thm:douglas theorem for polydisk}
Let $T\in \Tscr(L^{\infty}(\T^n))$. Then $\widetilde{T}\to \phi$ radially for some $\phi\in L^{\infty}(\T^n)$. Moreover, the map 
\[
\Sigma : \Tscr(L^{\infty}(\T^n)) \to L^{\infty}(\T^n), \qquad T \mapsto \phi
\]
is a $C^*$-algebra homomorphism with ker$(\Sigma)=\Sscr$ and $\Sigma(T_{\phi})=\phi$ for any Toeplitz operator $T_{\phi}$.
\end{thm}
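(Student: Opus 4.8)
The plan is to first establish the radial limit on a dense subalgebra, then pass to the whole algebra by an approximation argument, and finally identify the kernel via a quotient construction. Write $\mathcal{D}$ for the dense $*$-subalgebra of $\Tscr(L^{\infty}(\T^n))$ consisting of finite sums of finite products of Toeplitz operators. The first step is the algebraic observation that, modulo $\Sscr$, every product collapses to a single Toeplitz operator: since $T_{\phi}T_{\psi}=T_{\phi\psi}-[T_{\phi},T_{\psi})$ and $[T_{\phi},T_{\psi})\in\Sscr$, a straightforward induction gives $T_{\phi_1}\cdots T_{\phi_k}=T_{\phi_1\cdots\phi_k}+S$ with $S\in\Sscr$. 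Summing, every $D\in\mathcal{D}$ can be written as $D=T_{\phi}+S$ for some $\phi\in L^{\infty}(\T^n)$ and $S\in\Sscr$. Then $\widetilde{D}=\widetilde{T_{\phi}}+\widetilde{S}$, and Theorem \ref{thm:radial limit of Toeplitz operaor} together with Theorem \ref{thm:zero radial limit of operators in semicommutator ideal} yields $\widetilde{D}\to\phi$ radially.

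To extend this to an arbitrary $T\in\Tscr(L^{\infty}(\T^n))$, I would approximate $T$ in norm by a sequence $D_m=T_{\phi_m}+S_m\in\mathcal{D}$. The crucial estimate is that the boundary symbol is controlled by the operator norm: since the radial boundary function of a bounded function on $\D^n$ cannot increase the sup-norm, and $\|\widetilde{A}\|_{\infty}\le\|A\|$ for every $A$, one gets $\|\phi_m-\phi_{m'}\|_{\infty}\le\|\widetilde{D_m}-\widetilde{D_{m'}}\|_{\infty}\le\|D_m-D_{m'}\|$. Hence $(\phi_m)$ is Cauchy in $L^{\infty}(\T^n)$ and converges to some $\phi$. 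That $\widetilde{T}\to\phi$ radially then follows from an $\varepsilon/3$ argument exactly in the spirit of the proof of Theorem \ref{thm:my theorem}: for fixed large $m$ the terms $\|\widetilde{T}-\widetilde{D_m}\|_{\infty}\le\|T-D_m\|$ and $\|\phi_m-\phi\|_{\infty}$ are small uniformly, while $\widetilde{D_m}(r\bzeta)\to\phi_m(\bzeta)$ for a.e.\ $\bzeta$; unioning the countably many exceptional null sets obtained as $\varepsilon\to0$ produces a single null set off which $\widetilde{T}(r\bzeta)\to\phi(\bzeta)$. This approximation step, with its bookkeeping of null sets, is the main analytic obstacle. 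It also shows $\Sigma$ is well defined and contractive, since $\|\Sigma(T)\|_{\infty}=\|\phi\|_{\infty}\le\|T\|$.

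Next I would verify that $\Sigma$ is a $*$-homomorphism. Linearity is immediate because Berezin transforms, and hence their radial limits, are additive; the $*$-property follows from $\widetilde{T^*}=\overline{\widetilde{T}}$ pointwise, giving $\Sigma(T^*)=\overline{\Sigma(T)}$. For multiplicativity, using that $\Sigma$ is bounded it suffices to check the identity on $\mathcal{D}$: writing $D=T_{\phi}+S$ and $D'=T_{\phi'}+S'$ and using that $\Sscr$ is a two-sided ideal, $DD'=T_{\phi}T_{\phi'}+(\text{element of }\Sscr)=T_{\phi\phi'}+(\text{element of }\Sscr)$, whence $\Sigma(DD')=\phi\phi'=\Sigma(D)\Sigma(D')$; continuity of $\Sigma$ and of multiplication in $L^{\infty}(\T^n)$ then extends this to all of $\Tscr(L^{\infty}(\T^n))$. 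In particular $\Sigma(T_{\phi})=\phi$ by Theorem \ref{thm:radial limit of Toeplitz operaor}.

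Finally, for $\ker(\Sigma)=\Sscr$, the inclusion $\Sscr\subseteq\ker(\Sigma)$ is Theorem \ref{thm:zero radial limit of operators in semicommutator ideal}. For the reverse inclusion I would descend to the quotient $C^*$-algebra $\Tscr(L^{\infty}(\T^n))/\Sscr$ (note $\Sscr$ is self-adjoint, since $[T_{\phi},T_{\psi})^*=[T_{\overline{\psi}},T_{\overline{\phi}})$). The map $\iota:L^{\infty}(\T^n)\to\Tscr(L^{\infty}(\T^n))/\Sscr$, $\phi\mapsto T_{\phi}+\Sscr$, is a $*$-homomorphism, because $(T_{\phi}+\Sscr)(T_{\psi}+\Sscr)=T_{\phi\psi}+\Sscr$. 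Since $\Sigma$ factors through the quotient as $\overline{\Sigma}$ with $\overline{\Sigma}\circ\iota=\mathrm{id}_{L^{\infty}(\T^n)}$, the map $\iota$ is an injective $*$-homomorphism, hence isometric, so its image is a closed $*$-subalgebra; as this image generates $\Tscr(L^{\infty}(\T^n))/\Sscr$, it is all of it, making $\iota$ a $*$-isomorphism and $\overline{\Sigma}=\iota^{-1}$ injective. Therefore $\Sigma(T)=0$ forces $T\in\Sscr$, completing the identification of the kernel.
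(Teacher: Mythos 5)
Your proposal is correct and, for the core of the argument, runs along the same lines as the paper: both start from the inductive observation that $T_{\phi_1}\cdots T_{\phi_k}-T_{\phi_1\cdots\phi_k}\in\Sscr$, so that the dense subalgebra consists of operators $T_{\phi}+S$, and both use the contractivity estimate $\|\phi\|_{\infty}\le\|\widetilde{T_{\phi}+S}\|_{\infty}\le\|T_{\phi}+S\|$ coming from Theorems \ref{thm:radial limit of Toeplitz operaor} and \ref{thm:zero radial limit of operators in semicommutator ideal}. Where you diverge is in the endgame. The paper exploits the Cauchy estimate one step further: from $\|T_m-T\|\to 0$ it deduces $\|T_{\phi_m}-T_{\phi}\|=\|\phi_m-\phi\|_{\infty}\to 0$, hence $S_m=T_m-T_{\phi_m}$ converges to $T-T_{\phi}$, which lies in $\Sscr$ because $\Sscr$ is closed. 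This shows that $\Tscr(L^{\infty}(\T^n))$ equals (not merely contains densely) the set $\{T_{\phi}+S:\phi\in L^{\infty}(\T^n),\,S\in\Sscr\}$; the radial convergence of $\widetilde{T}$ then follows immediately from the two cited theorems applied to the exact decomposition, and $\ker(\Sigma)=\Sscr$ is read off directly ($\Sigma(T)=0$ iff $\phi=0$ iff $T=S$). You instead transfer the radial limit to the norm limit by an $\varepsilon/3$ argument with a countable union of exceptional null sets, and identify the kernel by passing to the quotient $\Tscr(L^{\infty}(\T^n))/\Sscr$ and showing $\iota:\phi\mapsto T_{\phi}+\Sscr$ is an injective, hence isometric and surjective, $*$-homomorphism. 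Both routes are valid; the paper's exact-decomposition observation is the more economical, since it eliminates the null-set bookkeeping at the level of the full algebra and makes the kernel computation a one-line consequence, while your quotient argument has the mild advantage of identifying $\Tscr(L^{\infty}(\T^n))/\Sscr\cong L^{\infty}(\T^n)$ explicitly as a $C^*$-isomorphism without first needing to know that every element of the algebra decomposes exactly.
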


\begin{proof}
We shall prove by induction that 
\[
\mbox{for any $m \geq 2$ if~} \phi_1,  \dots, \phi_m \in L^{\infty}(\T^n), ~\mbox{then~}
T_{\phi_1} \cdots T_{\phi_m}-T_{\phi_1 \cdots \phi_m}\in \Sscr.
\]

Recall that for any two symbols $\phi_1, \phi_2 \in L^{\infty}(\T^n)$, 
$T_{\phi_1}T_{\phi_2}-T_{\phi_1\phi_2}=-[T_{\phi_1}, T_{\phi_2})\in \Sscr$.

Suppose now $T_{\phi_1} \cdots T_{\phi_m}-T_{\phi_1 \cdots \phi_m}\in \Sscr$ for some $m >2$ and $\phi_1, \dots, \phi_m \in L^{\infty}(\T^n).$

Note that 
\begin{align*}
T_{\phi_1} \cdots T_{\phi_m}T_{\phi_{m+1}}-T_{\phi_1 \cdots \phi_m\phi_{m+1}} &= (T_{\phi_1} \cdots T_{\phi_m}T_{\phi_{m+1}} - T_{\phi_1 \cdots \phi_m}T_{\phi_{m+1}}) \\
& +(T_{\phi_1 \cdots \phi_m}T_{\phi_{m+1}}-T_{(\phi_1 \cdots \phi_m)\phi_{m+1}}) \\
&= (T_{\phi_1} \cdots T_{\phi_m}-T_{\phi_1 \cdots \phi_m})T_{\phi_{m+1}} \\
&-[T_{\phi_1 \cdots \phi_m}, T_{\phi_{m+1}}). 
\end{align*}
Since $\Sscr$ is a closed ideal in $\Tscr(L^{\infty}(\T^n))$, we have
\[
T_{\phi_1} \cdots T_{\phi_m}T_{\phi_{m+1}}-T_{\phi_1 \cdots \phi_m\phi_{m+1}}\in \Sscr.
\]
Now the collection of all finite sums of operators of the form $T_{\phi_1}\cdots T_{\phi_m}$ is dense in $\Tscr(L^{\infty}(\T^n))$. Thus operators of the form
\[
T=T_{\phi}+S \qquad (\phi\in L^{\infty}(\T^n),~ S\in \Sscr)
\]
is a dense subset of $\Tscr(L^{\infty}(\T^n))$. Consider
\[
\Tscr_0(L^{\infty}(\T^n))=\{\,T_{\phi}+S : \phi\in L^{\infty}(\T^n),\,S\in \Sscr\,\}.
\]
Suppose $T=T_{\phi}+S$ for some $\phi\in L^{\infty}(\T^n)$ and $S\in \Sscr$. 
Then, Theorem \ref{thm:radial limit of Toeplitz operaor} and Theorem \ref{thm:zero radial limit of operators in semicommutator ideal} yield $\widetilde{T} \to \phi$ radially. Define a map $\Sigma_0 : \Tscr_0(L^{\infty}(\T^n)) \to L^{\infty}(\T^n)$ as
\begin{align}\label{eq:definition of sigma_0}
\Sigma_0(T) = \phi \quad (T=T_{\phi}+S \in \Tscr_0(L^{\infty}(\T^n))).
\end{align}
We shall show that ${\Sigma_0}$ is well-defined. Suppose $T_{\phi_1}+S_1=T_{\phi_2}+S_2$. Then 
\begin{align*}
T_{\phi_1}-T_{\phi_2}=S_2-S_1 \implies T_{\phi_1-\phi_2}=S_2-S_1 .
\end{align*}
Now Theorem \ref{thm:zero radial limit of operators in semicommutator ideal} gives  
$\widetilde{T}_{\phi_1-\phi_2}=\widetilde{S_2-S_1} \to 0\text{ radially}$. Again 
Theorem \ref{thm:radial limit of Toeplitz operaor} infers $ \phi_1-\phi_2=0 \text{ a.e.}$ and hence $\phi_1=\phi_2 \text{ a.e.}$.

Suppose $T_{\phi_1}+S_1, T_{\phi_2}+S_2 \in \Tscr_0(L^{\infty}(\T^n)) $ and $\alpha \in \C$. Then
\begin{align*}
\Sigma_0\left( \alpha(T_{\phi_1}+S_1)+(T_{\phi_2}+S_2) \right) 
&= \Sigma_0(T_{\alpha\phi_1+\phi_2}+\alpha S_1+S_2) \\
&= \alpha\phi_1+\phi_2 \tag{By \eqref{eq:definition of sigma_0}} \\
&= \alpha\Sigma_0(T_{\phi_1}+S_1)+\Sigma_0(T_{\phi_2}+S_2).
\end{align*}
Again if $T \in \B(H^2(\D^n))$, then
\[
\left| \widetilde{T}(\bxi) \right|=\left|\langle Tk_{\bxi}, k_{\bxi} \rangle \right|\leq\|T\| \qquad  ( \bxi\in \D^n).
\]
Suppose $T=T_{\phi}+S$ for $\phi\in L^{\infty}(\T^n)$ and $S\in \Sscr$. Then
\[
\left|\widetilde{T}(re^{i\theta_1},\dots, re^{i\theta_n})\right|\leq\|T\| \quad 
\mbox{for all~} 0\leq r <1,\, (e^{i\theta_1}, \dots, e^{i\theta_n})\in \T^n.
\]
Letting $r\to 1^-$, we obtain 
$\left|\phi(e^{i\theta_1},\dots, e^{i\theta_n})\right|\leq \|T\|$ a.e.\ $(e^{i\theta_1},\dots, e^{i\theta_n})\in \T^n$. Hence
\[
\|\Sigma_0(T)\|_{L^{\infty}(\T^n)}=\|\phi\|_{L^{\infty}(\T^n)}\leq\|T\|.
\]
Therefore, $\Sigma_0$ is a linear and bounded map from $\Tscr_0(L^{\infty}(\T^n))$ to $L^{\infty}(\T^n)$.

We now extend $\Sigma_0$ to $ \overline{\Tscr_0(L^{\infty}(\T^n))}= \Tscr(L^{\infty}(\T^n))$, denoting its extension by $\Sigma$. Thus $\Sigma : \Tscr(L^{\infty}(\T^n)) \to L^{\infty}(\T^n)$ is a linear and bounded map. Suppose $T\in \Tscr(L^{\infty}(\T^n))$ and set $\phi=\Sigma(T)$. Then there exists a sequence $\{T_{m}\}_{m=1}^{\infty}$, where each $T_m=T_{\phi_m}+S_{m}\in \Tscr_0(L^{\infty}(\T^n))$, such that $\|T_m-T\|\to 0$ as $m\to \infty$. Therefore, 
\[
\|T_{\phi_m}-T_{\phi}\|=\|T_{\phi_m-\phi}\|=\|\phi_m-\phi\|_{L^{\infty}(\T^n)}=\|\Sigma(T_m)-\Sigma(T)\|_{L^{\infty}(\T^n)}\to 0 ~~ \mbox{~as~} m \to \infty.
\]
Thus
\[
S_m =T_m-T_{\phi_m}\to T-T_{\phi}  \text{ as } m\to \infty. 
\]    
Let $S=T-T_{\phi}$. Then $S\in \Sscr$ as $\{ S_m \}$ in $\Sscr$ and $\Sscr$ is closed, 
and hence if $T\in \Tscr(L^{\infty}(\T^n))$, then $T=T_{\phi}+S$, where $\phi\in L^{\infty}(\T^n)$ and $S\in \Sscr$. Thus
\[
\Tscr(L^{\infty}(\T^n))=\{\,T_{\phi}+S : \phi\in L^{\infty}(\T^n),\,S\in \Sscr\,\}.
\]
Therefore, Theorem \ref{thm:radial limit of Toeplitz operaor} and Theorem \ref{thm:zero radial limit of operators in semicommutator ideal} give $\widetilde{T} \to \phi$ radially.

To show $\Sigma$ is a $C^*$-algebra homomorphism, let $T=T_{\phi}+S$, where $\phi\in L^{\infty}(\T^n)$ and $S\in \Sscr$. 		
Note that 
\begin{align*}
\widetilde{(T_{\phi}+S)^*} &= \widetilde{T_{\overline{\phi}}+S^*} 
= \widetilde{T_{\overline{\phi}}}+\widetilde{S^*}
= \widetilde{T_{\overline{\phi}}}+\overline{\widetilde{S}} \to \overline{\phi} \quad radially.
\end{align*}
Therefore, $\Sigma(T^*)=\overline{\Sigma(T)}$. Suppose $T_1=T_{\phi_1}+S_1$ and $T_2=T_{\phi_2}+S_2$, where $\phi_1, \phi_2 \in L^{\infty}(\T^n)$ and $S_1, S_2 \in \Sscr$. Then
\begin{align*}
T_1T_2 
&= T_{\phi_1}T_{\phi_2}+T_{\phi_1}S_2+S_1T_{\phi_2}+S_1S_2 \\
&= T_{\phi_1\phi_2}+(T_{\phi_1}T_{\phi_2}-T_{\phi_1\phi_2})
+(T_{\phi_1}S_2+S_1T_{\phi_2}+S_1S_2).
\end{align*}
Since $\Sscr$ is an ideal in $\Tscr(L^{\infty}(\T^n))$, clearly $(T_{\phi_1}T_{\phi_2}-T_{\phi_1\phi_2}) +(T_{\phi_1}S_2+S_1T_{\phi_2}+S_1S_2) \in \Sscr$. Hence 
\[
\widetilde{T_1T_2} \to \phi_1\phi_2 ~~\mbox{radially}.
\]
It follows that $\Sigma(T_1T_2) = \Sigma(T_1) \Sigma(T_2)$ for any $T_1, T_2 \in \Tscr(L^{\infty}(\T^n))$.

Finally, it is clear from the definition that ker($\Sigma$)$=\Sscr$, the semicommutator ideal $\Sscr$ in $\Tscr(L^{\infty}(\T^n))$, and $\Sigma(T_{\phi})=\phi$ for all $\phi\in L^{\infty}(\T^n)$.	
This finishes the proof.	
\end{proof}

The next result readily follows from the above theorem.	
	
\begin{cor}\label{cor: (T^*T-TT^*) to 0}
For $T\in \Tscr(L^{\infty}(\T^n))$, $\widetilde{[T^*, T]} \to 0$ radially.
\end{cor}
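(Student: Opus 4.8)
The plan is to deduce this directly from the homomorphism property established in Theorem \ref{thm:douglas theorem for polydisk}, exploiting the commutativity of the codomain $L^{\infty}(\T^n)$. The essential observation is that although $T$ and $T^*$ need not commute as operators on $H^2(\D^n)$, their \emph{symbols} do, because $L^{\infty}(\T^n)$ is a commutative algebra; this is what forces the symbol of the self-commutator to vanish.

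First I would note that $\Tscr(L^{\infty}(\T^n))$ is a $C^*$-algebra, so if $T\in \Tscr(L^{\infty}(\T^n))$ then $T^*\in \Tscr(L^{\infty}(\T^n))$ and consequently $[T^*, T]=T^*T-TT^*\in \Tscr(L^{\infty}(\T^n))$. Hence Theorem \ref{thm:douglas theorem for polydisk} applies to $[T^*, T]$ and guarantees that $\widetilde{[T^*, T]}\to \Sigma([T^*, T])$ radially; it therefore suffices to verify that $\Sigma([T^*, T])=0$.

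Next I would invoke the fact that $\Sigma$ is a $C^*$-algebra homomorphism. This gives $\Sigma(T^*T)=\Sigma(T^*)\Sigma(T)$ and $\Sigma(TT^*)=\Sigma(T)\Sigma(T^*)$, so by linearity
\[
\Sigma([T^*, T])=\Sigma(T^*T)-\Sigma(TT^*)=\Sigma(T^*)\Sigma(T)-\Sigma(T)\Sigma(T^*).
\]
Because $\Sigma(T^*)$ and $\Sigma(T)$ are elements of the commutative algebra $L^{\infty}(\T^n)$, the right-hand side vanishes, whence $\Sigma([T^*, T])=0$, that is, $[T^*, T]\in \ker(\Sigma)=\Sscr$. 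Combining this with the radial convergence $\widetilde{[T^*, T]}\to \Sigma([T^*, T])$ from the main theorem (equivalently, applying Theorem \ref{thm:zero radial limit of operators in semicommutator ideal} directly, since $[T^*, T]\in\Sscr$) yields $\widetilde{[T^*, T]}\to 0$ radially.

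There is no genuine obstacle here: the corollary is an immediate consequence of the homomorphism property together with the commutativity of $L^{\infty}(\T^n)$. The only point requiring a moment's care is that every operator in sight truly lies in $\Tscr(L^{\infty}(\T^n))$ so that $\Sigma$ may legitimately be applied, and this is automatic since the algebra is closed under both products and adjoints.
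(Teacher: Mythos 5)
Your proof is correct and is exactly the argument the paper intends: the paper offers no written proof beyond ``readily follows from the above theorem,'' and the intended route is precisely that $\Sigma$ is a $*$-homomorphism into the commutative algebra $L^{\infty}(\T^n)$, so $\Sigma([T^*,T])=0$, placing $[T^*,T]$ in $\ker(\Sigma)=\Sscr$ and forcing $\widetilde{[T^*,T]}\to 0$ radially. No gaps; your care in checking that $[T^*,T]$ lies in the Toeplitz algebra is appropriate and suffices.
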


We shall now show that the symbol map can not be extended to the whole of 
$\B(H^2(\D^n))$ in the following example (see \cite{{Engls-SYMBOL MAP VIA BEREZIN 
TRAN. ON H2}} for $n=1$ case).

\begin{ex}	
Consider the operator $T$ on $H^2(\D)$, defined by
\[
Tz^{k}=z^{2k+1} \qquad (k\in \Z_{+}),
\]
where $\{z^k \}_{k\in \Z_{+}}$ denotes the standard orthonormal basis for $H^2(\D)$. 
Now
\[
\langle Tz^k, Tz^l \rangle=\langle z^{2k+1}, z^{2l+1}\rangle=\delta_{k,l}=\langle z^k, z^l\rangle, \qquad (k,l\in\Z_+).
\]
Therefore, $T$ is an isometry on $H^2(\D)$. Thus, $T^*T=I_{H^2(\D)}$ and $TT^*=P_{\mathcal{R}(T)}$, the orthogonal projection onto the range $\mathcal{R}(T)=\bigvee_{k=0}^{\infty}\{z^{2k+1}\}$. Hence
\[
(T^*T-TT^*)z^k=(I-P_{\mathcal{R}(T)})z^k=
\begin{cases}
0 \quad & \text{if $k$ is odd;}\\
z^k \quad & \text{if $k$ is even.}
\end{cases}
\]
That means $[T^*, T]$ is an orthogonal projection onto $\clm=\bigvee\{z^{2k} : k\in \Z_{+}\}$. Therefore, for $\xi \in \D$
\begin{align*}
\widetilde{[T^*, T]}(\xi)
=\|P_{\mathcal{M}}k_{\xi}\|^2
&=(1-|\xi|^2)\|P_{\mathcal{M}}K_{\xi}\|^2\\
&=(1-|\xi|^2)\left\|P_{\mathcal{M}}\left(\sum_{k=0}^{\infty}\overline{\xi}^kz^k\right)\right\|^2\\
&=(1-|\xi|^2)\left\|\sum_{k=0}^{\infty}\overline{\xi}^{2k}z^{2k}\right\|^2\\
&=(1-|\xi|^2)\sum_{k=0}^{\infty}|\xi|^{4k}\\
&=\frac{(1-|\xi|^2)}{(1-|\xi|^4)}=\frac{1}{1+|\xi|^2} \to \frac{1}{2} \text{ radially.}
\end{align*}
Now we consider the operator $S$ on $H^2(\D^n)$ as
\[
S =\overbrace{T\otimes \cdots \otimes T}^{n\text{-times}}.
\]
Then 
\begin{align*}
S^*S-SS^* &= (T\otimes \cdots \otimes T)^*(T\otimes \cdots \otimes T)-(T\otimes \cdots \otimes T)(T\otimes  \cdots \otimes T)^*\\
&= (T^*T\otimes T^*T\otimes \cdots \otimes T^*T)-(TT^*\otimes TT^*\otimes \cdots \otimes TT^*)\\
&= I_{H^2(\D^n)}-(TT^*\otimes TT^*\otimes \cdots \otimes TT^*).
\end{align*}
Thus for $\bm{k}=(k_1,\dots,k_n)\in \Z_{+}^n$,
\[
[S^*, S]\z^{\K} =
\begin{cases}
\z^{\K} \quad & \text{if $k_j$ is even for some $j$;}\\
0       \quad & \text{if $k_j$ is odd for all $j$.} 
\end{cases}
\]
Hence $[S^*, S]$ is an orthogonal projection onto
\[
\cln=\bigvee \{ \z^{\K}  : \bm{k}=(k_1,\dots, k_n)\in \Z_{+}, k_j \text{ is even for some } 1 \leq j \leq n \},
\]
that is, $[S^*, S]= P_{\cln}$.
		
\NI
Now for $\bxi=(\xi_1,\dots,\xi_n)\in \D^n$,
\begin{align*}
\widetilde{[S^*, S]}(\bxi) =\|P_{\mathcal{N}}k_{\bxi}\|^2
&=\left( \prod_{j=1}^n(1-|\xi_j|^2)\right)\left\|P_{\cln}\left( \sum_{\K\in\Z_+^n}\overline{\bxi}^{\K}\z^{\K} \right)\right\|^2\\
&=\left( \prod_{j=1}^n(1-|\xi_j|^2)\right)\left\| \sum_{\underset{\text{some $k_j$ even}}{\K\in\Z_+^n}}\overline{\bxi}^{\K}\z^{\K}\right\|^2\\
&=\left( \prod_{j=1}^n(1-|\xi_j|^2)\right)\left( \sum_{\text{some $k_j$ even}}|\xi_1|^{2k_1}\dots|\xi_n|^{2k_n} \right)\\
&\geq\left( \prod_{j=1}^n(1-|\xi_j|^2)\right)\left( \sum_{\text{all $k_j$ even}}|\xi_1|^{2k_1}\dots|\xi_n|^{2k_n} \right)\\
&=\lt( \prod_{j=1}^n(1-|\xi_j|^2)\rt) \lt( \sum_{k_1 \text{ even}} |\xi_1|^{2k_1} \rt)\dots \lt( \sum_{k_n \text{ even}} |\xi_n|^{2k_n} \rt)\\
&=\lt( \prod_{j=1}^n(1-|\xi_j|^2)\rt) \lt( \sum_{m=0}^{\infty} |\xi_1|^{4m} \rt)\dots
\lt( \sum_{m=0}^{\infty}|\xi_n|^{4m} \rt)\\
&=\frac{\prod_{j=1}^n(1-|\xi_j|^2)}{\prod_{j=1}^n(1-|\xi_j|^4)}
={\prod_{j=1}^n(1+|\xi_j|^2)^{-1}} \geq \frac{1}{2^n} > 0.
\end{align*}
Hence, by corollary \ref{cor: (T^*T-TT^*) to 0} we conclude that the symbol calculus can not be extended to any $C^*$-algebra that contains $S$.
\end{ex}

\section{Extension of Toeplitz Algebra}

In this section, we present a family of $C^*$-algebras containing the Toeplitz algebra 
$\Tscr(L^{\infty}(\T^n))$ and the analogs of Theorem \ref{thm:douglas theorem for polydisk} still hold.

Let $\psi\in L^{\infty}(\T^n)$ and $S\in \Sscr$. Consider
\begin{equation}\label{eq:Iscr algebra}
\Bscr(\psi, S) = \{A\in \B(H^2(\D^n)) : \|A(T_{\psi}+S)k_{\bxi}\| \text{ and } \|A^*(T_{\psi}+S)k_{\bxi}\| \to 0 \text{ radially}\}.    
\end{equation}
In particular, if $\psi = 1$ a.e.\ on $\T^n$ and $S=\bm{0}$, where $\bm{0}$ denotes the zero operator on $H^2(\D^n)$, then we simply write $\Bscr=\Bscr(1, \bm{0})$.

\begin{lem}\label{B is c* algebra}
$\Bscr(\psi,\bm{0})$ is a $C^*$-algebra for all $\psi\in L^{\infty}(\T^n)$.
\end{lem}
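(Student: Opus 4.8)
The plan is to show that $\Bscr(\psi,\bm 0)$ is a norm-closed $*$-subalgebra of $\B(H^2(\D^n))$. Since $\B(H^2(\D^n))$ is a $C^*$-algebra and both the $C^*$-identity and completeness are inherited by any norm-closed $*$-subalgebra, establishing these four features (closed under the adjoint, a linear subspace, closed under products, and norm-closed) will suffice. Throughout I would exploit the symmetric pair of defining conditions, namely that $A\in\Bscr(\psi,\bm 0)$ exactly when both $\|AT_\psi k_{\bxi}\|\to 0$ and $\|A^*T_\psi k_{\bxi}\|\to 0$ radially, together with the $\bxi$-independent bound $\|T_\psi k_{\bxi}\|\le\|T_\psi\|=\|\psi\|_\infty$, which follows from $\|k_{\bxi}\|=1$.

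Closure under the adjoint is immediate, since the two defining conditions for $A$ are simply interchanged for $A^*$. For the vector-space structure, given $A,B\in\Bscr(\psi,\bm 0)$ and scalars $\alpha,\beta$, the triangle inequality $\|(\alpha A+\beta B)T_\psi k_{\bxi}\|\le|\alpha|\|AT_\psi k_{\bxi}\|+|\beta|\|BT_\psi k_{\bxi}\|$ drives the left side to zero radially, and the analogous estimate for $(\alpha A+\beta B)^*$ settles the second condition. For multiplicative closure I would use submultiplicativity in the form $\|ABT_\psi k_{\bxi}\|\le\|A\|\,\|BT_\psi k_{\bxi}\|$: as $\|BT_\psi k_{\bxi}\|\to 0$ radially and $\|A\|$ is a fixed constant, the product tends to zero radially, and $(AB)^*=B^*A^*$ is handled identically.

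The crux is norm-closedness. Suppose $A_m\in\Bscr(\psi,\bm 0)$ and $\|A_m-A\|\to 0$. For each $m$ let $E_m\subset\T^n$ be the null set off which the radial limits of $\|A_mT_\psi k_{\bxi}\|$ and $\|A_m^*T_\psi k_{\bxi}\|$ both vanish, and set $E=\bigcup_m E_m$, still null. Fix $\bzeta\in\T^n\setminus E$ and $\varepsilon>0$. I would first choose $m$ with $\|A-A_m\|\,\|\psi\|_\infty<\varepsilon/2$, then, using $\bzeta\notin E_m$, choose $\delta>0$ so that $\|A_mT_\psi k_{r\bzeta}\|<\varepsilon/2$ whenever $r\in(1-\delta,1)$; the splitting $\|AT_\psi k_{r\bzeta}\|\le\|A-A_m\|\,\|T_\psi k_{r\bzeta}\|+\|A_mT_\psi k_{r\bzeta}\|$ combined with $\|T_\psi k_{r\bzeta}\|\le\|\psi\|_\infty$ then gives $\|AT_\psi k_{r\bzeta}\|<\varepsilon$ for all such $r$. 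Hence $\|AT_\psi k_{\bxi}\|\to 0$ radially, and the same argument applied to $A_m^*,A^*$ (using $\|A_m^*-A^*\|=\|A_m-A\|$) yields $\|A^*T_\psi k_{\bxi}\|\to 0$ radially, so $A\in\Bscr(\psi,\bm 0)$.

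I expect this last step to be the main obstacle, since it requires interchanging the two limiting processes $m\to\infty$ and $r\to 1^-$ at almost every boundary point simultaneously. The device that makes the interchange legitimate is precisely the uniform bound $\|T_\psi k_{\bxi}\|\le\|\psi\|_\infty$: it decouples the operator-norm approximation from the radial limit, allowing the $\varepsilon/2$ split to work uniformly in $r$. Without such a bound the two limits could not be separated and the closure argument would fail.
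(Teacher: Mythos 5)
Your proposal is correct and follows essentially the same route as the paper's proof: closure under adjoints, linear combinations, and products via the triangle inequality and submultiplicativity, and norm-closedness via the $\varepsilon/2$ split $\|AT_\psi k_{\bxi}\|\le\|A-A_m\|\,\|T_\psi k_{\bxi}\|+\|A_mT_\psi k_{\bxi}\|$ together with the union $E=\bigcup_m E_m$ of the exceptional null sets. The only cosmetic difference is that the paper bounds $\|T_\psi k_{\bxi}\|$ by $\|T_\psi\|$ rather than by $\|\psi\|_\infty$, which changes nothing.
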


\begin{proof}
Let $A_1, A_2\in \Bscr(\psi,\bm{0})$ and $c\in \C$. Then $cA_1+A_2, A_1 A_2\in \Bscr(\psi,\bm{0})$
as	
\begin{align*}	
\|(cA_1+A_2)T_{\psi}k_{\bxi}\| &\leq |c|\|A_1T_{\psi}k_{\bxi}\|+\|A_2T_{\psi}k_{\bxi}\|,\\
\|(cA_1+A_2)^*T_{\psi}k_{\bxi}\|& \leq|c|\|A_1^*T_{\psi}k_{\bxi}\|+\|A_2^*T_{\psi}k_{\bxi}\|,
\end{align*}
\[
\|A_1A_2T_{\psi}k_{\bxi}\|\leq\|A_1\|\|A_2T_{\psi}k_{\bxi}\| \quad \text{and} \quad 
\|(A_1A_2)^*T_{\psi}k_{\bxi}\|\leq\|A_2^*\|\|A_1^*T_{\psi}k_{\bxi}\|.
\]
From the definition, it follows that if $A \in \Bscr(\psi,\bm{0})$, then  $A^* \in \Bscr(\psi,\bm{0})$.

To see that $\Bscr(\psi,\bm{0})$ is closed, let $(A_m)$ be a sequence in $\Bscr(\psi,\bm{0})$ such that $\|A-A_m\|\to 0$ as $m\to \infty$ for some $A\in\B(H^2(\D^n))$. 
Now for any $\varepsilon>0$ choose $N\in\N$ such that $\|A-A_N\|<\frac{\varepsilon}{2(1+\|T_{\psi} \|)}$. Therefore,
\begin{align}
\|AT_{\psi}k_{\bxi}\| 
&\leq \|(A-A_N)T_{\psi}k_{\bxi}\|+\|A_NT_{\psi}k_{\bxi}\| \notag \\
&\leq \|A-A_N\| \|T_{\psi} \| + \|A_NT_{\psi}k_{\bxi}\| \notag \\
&< \varepsilon/2 + \|A_NT_{\psi}k_{\bxi}\|. \label{eq:Iscr is closed}
\end{align}
Since $\|A_mT_{\psi}k_{\bxi}\|\to 0$ radially for each $m\in \N$, there exists a measure zero subset $E_m$ of $\T^n$ such that 
\[
\|A_mT_{\psi}k_{(re^{i\theta_1},\dots,re^{i\theta_n})}\|\to 0 \quad\text{as $r \to 1^{-}$},
\]
for all $(e^{i\theta_1},\dots,e^{i\theta_n})\in\T^n\setminus E_m$. Consider $E=\cup_{m=1}^{\infty}E_m$ of $\T^n$, and hence $E$ is a measure zero subset. Thus for a fixed 
$(e^{i\theta_1},\dots,e^{i\theta_n})\in \T^n\setminus E$, we choose $0<s<1$ such that 
$\|A_NT_{\psi}k_{(re^{i\theta_1},\dots,re^{i\theta_n})}\|<\varepsilon/2$ whenever $s<r<1$. Hence the equation \eqref{eq:Iscr is closed} implies that
\[
s<r<1 \implies \|AT_{\psi}k_{(re^{i\theta_1},\dots,re^{i\theta_n})}\|<\varepsilon.
\]
Therefore, $\|AT_{\psi}k_{\bxi}\|\to 0$ radially. Now $\|A_m^*-A^*\|=\|A_m-A\|\to 0$ as $m\to \infty$ and $\|A_m^*T_{\psi}k_{\bxi}\|\to 0$ radially for all $m\in\N$. It follows that $\|A^*T_{\psi}k_{\bxi}\|\to 0$ radially. Thus $A\in\Bscr(\psi,\bm{0})$ and hence 
$\Bscr(\psi,\bm{0})$ is closed.
\end{proof}

\begin{lem}\label{lem: B contains semicommutator}
$\Cscr\subseteq\Sscr\subseteq\Bscr$.
\end{lem}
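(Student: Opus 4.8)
The plan is to prove the two inclusions separately, the first being immediate from the main theorem and the second resting on a preparatory norm estimate for Hankel operators.

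For $\Cscr\subseteq\Sscr$, I would invoke Theorem \ref{thm:douglas theorem for polydisk}, which furnishes the $C^*$-homomorphism $\Sigma:\Tscr(L^{\infty}(\T^n))\to L^{\infty}(\T^n)$ with $\ker(\Sigma)=\Sscr$. For any $A,B\in\Tscr(L^{\infty}(\T^n))$ one has $\Sigma([A,B])=\Sigma(A)\Sigma(B)-\Sigma(B)\Sigma(A)=0$, since $L^{\infty}(\T^n)$ is commutative; hence every commutator $[A,B]$ lies in $\ker(\Sigma)=\Sscr$. As $\Sscr$ is a closed ideal containing all such commutators while $\Cscr$ is by definition the smallest closed ideal containing them, $\Cscr\subseteq\Sscr$ follows at once.

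For $\Sscr\subseteq\Bscr$, first observe that $\Bscr=\Bscr(1,\bm{0})$ consists of those $A\in\B(H^2(\D^n))$ with $\|Ak_{\bxi}\|\to 0$ and $\|A^*k_{\bxi}\|\to 0$ radially, because $T_1+\bm{0}=I$. Using $[T_{\phi},T_{\psi})=H_{\overline{\phi}}^{*}H_{\psi}$ and $[T_{\phi},T_{\psi})^{*}=H_{\psi}^{*}H_{\overline{\phi}}$ together with $\|H_{\phi}k_{\bxi}\|\to 0$ radially from Theorem \ref{thm:Hankel radial limit 0}, each generator $[T_{\phi},T_{\psi})$ is seen to lie in $\Bscr$. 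Since the finite sums of operators $A[T_{\phi},T_{\psi})B$ with $A,B\in\Tscr(L^{\infty}(\T^n))$ are dense in $\Sscr$ (as used in the proof of Theorem \ref{thm:zero radial limit of operators in semicommutator ideal}), and $\Bscr$ is a norm-closed linear space by Lemma \ref{B is c* algebra}, it suffices to show $A[T_{\phi},T_{\psi})B\in\Bscr$. The heart of the matter is the claim that \emph{for every $C\in\Tscr(L^{\infty}(\T^n))$ and every $\chi\in L^{\infty}(\T^n)$ one has $\|H_{\chi}Ck_{\bxi}\|\to 0$ radially}. Granting this, writing $A[T_{\phi},T_{\psi})B=AH_{\overline{\phi}}^{*}H_{\psi}B$ gives $\|A[T_{\phi},T_{\psi})Bk_{\bxi}\|\le\|A\|\,\|H_{\overline{\phi}}^{*}\|\,\|H_{\psi}Bk_{\bxi}\|\to 0$ by the claim with $C=B$, $\chi=\psi$; and since $(A[T_{\phi},T_{\psi})B)^{*}=B^{*}H_{\psi}^{*}H_{\overline{\phi}}A^{*}$ with $A^{*}\in\Tscr(L^{\infty}(\T^n))$, the claim with $C=A^{*}$, $\chi=\overline{\phi}$ gives $\|(A[T_{\phi},T_{\psi})B)^{*}k_{\bxi}\|\to 0$. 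Thus $A[T_{\phi},T_{\psi})B\in\Bscr$, and closedness completes $\Sscr\subseteq\Bscr$.

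The claim, which I expect to be the main obstacle, I would prove by induction on the length $m$ of a product $C=T_{\eta_1}\cdots T_{\eta_m}$, keeping the quantifier over all $\chi\in L^{\infty}(\T^n)$ inside the induction, and then pass to arbitrary $C\in\Tscr(L^{\infty}(\T^n))$ by density. The base case $m=0$ (i.e.\ $C=I$) is exactly Theorem \ref{thm:Hankel radial limit 0}. For the inductive step I would use the identity $H_{\chi}T_{\eta}=H_{\chi\eta}-R_{\chi}H_{\eta}$, where $R_{\chi}g=P_{H^2(\D^n)^\perp}(\chi g)$ for $g\in H^2(\D^n)^\perp$ (so $\|R_{\chi}\|\le\|\chi\|_{\infty}$); this is immediate from $\eta f=T_{\eta}f+H_{\eta}f$. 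Writing $C=T_{\eta_1}B$ with $B$ a product of length $m$, it yields $H_{\chi}Ck_{\bxi}=H_{\chi\eta_1}Bk_{\bxi}-R_{\chi}H_{\eta_1}Bk_{\bxi}$, and both terms tend to $0$ radially by the inductive hypothesis applied with symbols $\chi\eta_1$ and $\eta_1$. Finally, for general $C$, I would approximate $C$ in norm by such finite sums $C'$ and use $\|H_{\chi}(C-C')k_{\bxi}\|\le\|H_{\chi}\|\,\|C-C'\|$; the same $\varepsilon/2$ splitting together with a countable-union-of-null-sets argument as in the closedness part of Lemma \ref{B is c* algebra} then delivers $\|H_{\chi}Ck_{\bxi}\|\to 0$ radially, establishing the claim and hence the lemma.
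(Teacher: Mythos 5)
Your proposal is correct, and for the inclusion $\Sscr\subseteq\Bscr$ it goes noticeably further than the paper does. The paper's proof consists of exactly your generator estimate: $\|[T_{\phi},T_{\psi})k_{\bxi}\|\leq\|H_{\overline{\phi}}^*\|\,\|H_{\psi}k_{\bxi}\|$ and $\|[T_{\phi},T_{\psi})^*k_{\bxi}\|\leq\|H_{\psi}^*\|\,\|H_{\overline{\phi}}k_{\bxi}\|$, followed by Theorem \ref{thm:Hankel radial limit 0} — and it stops there, leaving implicit the passage from the generators to the full closed \emph{ideal} $\Sscr$, i.e.\ to elements $A[T_{\phi},T_{\psi})B$ with $A,B\in\Tscr(L^{\infty}(\T^n))$. (This step is not automatic from Lemma \ref{B is c* algebra} alone, since $A$ and $B$ need not lie in $\Bscr$; in the paper it is ultimately covered by Proposition \ref{prop:B is ideal w.r.to Toeplitz}, which however appears only after this lemma.) You supply exactly the missing absorption argument, via the identity $H_{\chi}T_{\eta}=H_{\chi\eta}-R_{\chi}H_{\eta}$ and an induction showing $\|H_{\chi}Ck_{\bxi}\|\to 0$ radially for every $C\in\Tscr(L^{\infty}(\T^n))$, with the quantifier over $\chi$ correctly carried inside the induction and the final density/null-set argument handled as in Lemma \ref{B is c* algebra}. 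For $\Cscr\subseteq\Sscr$ the paper merely says it is easy to see; your derivation from $\ker(\Sigma)=\Sscr$ and the commutativity of $L^{\infty}(\T^n)$ is legitimate and non-circular, since Theorem \ref{thm:douglas theorem for polydisk} precedes this lemma and does not depend on it. In short: same core estimate as the paper, but your version is the complete proof.
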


\begin{proof}
Firstly, it is easy to see that $\Cscr\subseteq\Sscr$. Let $\phi,\psi\in L^{\infty}(\T^n)$. Then
\begin{align*}
\|[T_{\phi}, T_{\psi})k_{\bxi}\| &= \|H_{\overline{\phi}}^*H_{\psi}k_{\bxi}\|\leq\|H_{\overline{\phi}}^*\|\|H_{\psi}k_{\bxi}\|, \\
\text{ and } \|[T_{\phi},T_{\psi})^*k_{\bxi}\| &= \|(H_{\overline{\phi}}^*H_{\psi})^*k_{\bxi}\|
\leq\|H_{\psi}^*\|\|H_{\overline{\phi}}k_{\bxi}\|.
\end{align*}
Now Theorem \ref{thm:Hankel radial limit 0} infers that $[T_{\phi},T_{\psi})\in\Bscr$,
and hence $\Cscr\subseteq\Sscr\subseteq\Bscr$.
\end{proof}

\begin{lem}
$\Bscr(\psi, S)=\Bscr(\psi, \bm{0})$ for all $\psi\in L^{\infty}(\T^n)$ and $S\in \Sscr$.
\end{lem}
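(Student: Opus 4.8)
The plan is to reduce the whole statement to a single fact already recorded in Lemma~\ref{lem: B contains semicommutator}, namely that $\Sscr \subseteq \Bscr = \Bscr(1, \bm{0})$. Since $T_1 = I_{H^2(\D^n)}$, the defining condition of $\Bscr(1,\bm{0})$ applied to $S \in \Sscr$ says exactly that $\|S k_{\bxi}\| \to 0$ radially (and, symmetrically, $\|S^* k_{\bxi}\| \to 0$ radially, although only the former will be needed). Everything beyond this is a triangle-inequality estimate, so I expect no genuine obstacle; the content is carried entirely by Lemma~\ref{lem: B contains semicommutator}.

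First I would fix an arbitrary $A \in \B(H^2(\D^n))$ and split the relevant vector as $(T_{\psi}+S)k_{\bxi} = T_{\psi}k_{\bxi} + S k_{\bxi}$. Applying $A$ and using the reverse triangle inequality gives
\[
\big|\, \|A(T_{\psi}+S)k_{\bxi}\| - \|A T_{\psi} k_{\bxi}\| \,\big| \leq \|A S k_{\bxi}\| \leq \|A\|\,\|S k_{\bxi}\|,
\]
and the identical bound holds with $A$ replaced by $A^*$, since $\|A^*\| = \|A\|$ and the two defining conditions of $\Bscr(\psi,S)$ in \eqref{eq:Iscr algebra} both act on the same vector $(T_{\psi}+S)k_{\bxi}$. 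Because $\|S k_{\bxi}\| \to 0$ radially, there is a full-measure set of $\bzeta \in \T^n$ along which $\lim_{r\to 1^-}\|A\|\,\|S k_{r\bzeta}\| = 0$.

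Intersecting this full-measure set with the null-complement on which the remaining radial limits are being tested, I would conclude that for a.e.\ $\bzeta \in \T^n$ the two quantities $\|A(T_{\psi}+S)k_{r\bzeta}\|$ and $\|A T_{\psi} k_{r\bzeta}\|$ have the same radial limit, and likewise for the $A^*$-versions. Hence, for a.e.\ $\bzeta$, $\lim_{r\to 1^-}\|A(T_{\psi}+S)k_{r\bzeta}\| = 0$ if and only if $\lim_{r\to 1^-}\|A T_{\psi} k_{r\bzeta}\| = 0$, with the analogous equivalence for the adjoint conditions. Reading these off against the definitions of $\Bscr(\psi, S)$ and $\Bscr(\psi, \bm{0})$ shows $A \in \Bscr(\psi, S)$ precisely when $A \in \Bscr(\psi, \bm{0})$, which gives the desired equality $\Bscr(\psi, S) = \Bscr(\psi, \bm{0})$.

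The only point demanding a little care is the bookkeeping of null sets: radial convergence is an almost-everywhere statement, so the equivalence above must first be established pointwise on the intersection of the finitely many full-measure sets furnished by $\|S k_{\bxi}\| \to 0$ and the hypotheses on $A$, and then promoted back to the radial statement on $\T^n$.
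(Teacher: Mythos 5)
Your proof is correct and follows essentially the same route as the paper: both rest on the fact that $\|Sk_{\bxi}\|\to 0$ radially for $S\in\Sscr$ (via Lemma \ref{lem: B contains semicommutator}) combined with a triangle-inequality estimate comparing $\|A(T_{\psi}+S)k_{\bxi}\|$ with $\|AT_{\psi}k_{\bxi}\|$. The only cosmetic difference is that you handle both inclusions at once with the reverse triangle inequality, whereas the paper treats the inclusion $\Bscr(\psi,\bm{0})\subseteq\Bscr(\psi,S)$ separately and then proves the other with a one-sided estimate.
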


\begin{proof}
It follows easily from Lemma \ref{lem: B contains semicommutator} that $\Bscr(\psi, \bm{0})\subseteq \Bscr(\psi, S)$. 

Suppose $\psi\in L^{\infty}(\T^n)$ and $S\in \Sscr$, and let $A\in \Bscr(\psi, S)$. 
Then $\|Sk_{\bxi}\|\to 0$ radially, and we also have
\begin{align*}
\|AT_{\psi}k_{\bxi}\| 
&\leq \|A(T_{\psi}+S)k_{\bxi}\| + \|A\|\|Sk_{\bxi}\|\to 0,\\
\|A^*T_{\psi}k_{\bxi}\|
&\leq \|A^*(T_{\psi}+S)k_{\bxi}\| + \|A\|\|Sk_{\bxi}\|\to 0.
\end{align*}
Therefore, $A\in \Bscr(\psi, \bm{0})$.
\end{proof}

\begin{thm} \label{thm:eigenfunctions of Toeplitz operator}
Let $\phi\in L^2(\T^n)$. Then $\|P_{H^2(\D^n)}(\phi k_{\bxi}) - \widetilde{\phi}(\bxi)k_{\bxi}\|\to 0$ radially. In particular, $\|T_{\phi}k_{\bxi}-\wt{\phi}(\bxi)k_{\bxi}\|\to 0$ radially for $\phi\in L^{\infty}(\T^n)$.
\end{thm}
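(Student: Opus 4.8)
The plan is to reduce everything to the two radial-limit facts already available, namely Corollary \ref{cor:my corollary} and Theorem \ref{thm: radial limit theorem}, by expanding the relevant squared norm. Write $P = P_{H^2(\D^n)}$ throughout. Since $\|k_{\bxi}\| = 1$, expanding gives, for each fixed $\bxi \in \D^n$,
\[
\|P(\phi k_{\bxi}) - \wt{\phi}(\bxi)k_{\bxi}\|^2
= \|P(\phi k_{\bxi})\|^2 - 2\Re\!\left(\overline{\wt{\phi}(\bxi)}\,\langle P(\phi k_{\bxi}), k_{\bxi}\rangle\right) + |\wt{\phi}(\bxi)|^2.
\]

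First I would evaluate the cross term. Because $k_{\bxi} \in H^2(\D^n)$, the projection $P$ is self-adjoint and fixes $k_{\bxi}$, so $\langle P(\phi k_{\bxi}), k_{\bxi}\rangle = \langle \phi k_{\bxi}, k_{\bxi}\rangle_{L^2(\T^n)}$, and Lemma \ref{lemma:radial limit lemma} (with $f=\phi$, $g=1$) identifies this with $\wt{\phi}(\bxi)$. Substituting back, the cross term becomes $-2|\wt{\phi}(\bxi)|^2$, and the identity collapses to the clean formula
\[
\|P(\phi k_{\bxi}) - \wt{\phi}(\bxi)k_{\bxi}\|^2 = \|P(\phi k_{\bxi})\|^2 - |\wt{\phi}(\bxi)|^2.
\]

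The remaining step is to take radial limits of the two terms on the right. Applying Corollary \ref{cor:my corollary} with $f=g=\phi$ gives $\|P(\phi k_{\bxi})\|^2 = \langle P(\phi k_{\bxi}), P(\phi k_{\bxi})\rangle \to |\phi|^2$ radially; and since $\phi \in L^2(\T^n) \subseteq L^1(\T^n)$, Theorem \ref{thm: radial limit theorem} gives $\wt{\phi}(\bxi) \to \phi$, whence $|\wt{\phi}(\bxi)|^2 \to |\phi|^2$ radially. Both limits hold off a common null set (the union of the two exceptional sets), so the difference tends to $|\phi|^2 - |\phi|^2 = 0$ radially, which is the assertion. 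Finally, for $\phi \in L^\infty(\T^n) \subseteq L^2(\T^n)$ one has $T_{\phi}k_{\bxi} = P(\phi k_{\bxi})$ by the very definition of the Toeplitz operator, so the ``in particular'' claim is immediate.

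I expect no serious obstacle here: the argument is a one-line algebraic expansion followed by two invocations of earlier results. The only point requiring a little care is the radial-limit bookkeeping---the squared-norm identity is an exact pointwise equality in $\bxi$, so the radial convergence of the left-hand side follows from that of each summand on the right once one intersects the respective full-measure sets on which the two component limits hold.
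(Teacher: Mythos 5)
Your proposal is correct and follows essentially the same route as the paper: expand the squared norm and take radial limits via Corollary \ref{cor:my corollary} and Theorem \ref{thm: radial limit theorem}. The only (harmless) difference is that you first use Lemma \ref{lemma:radial limit lemma} to identify the cross term exactly as $\wt{\phi}(\bxi)$, collapsing the identity to $\|P_{H^2(\D^n)}(\phi k_{\bxi})\|^2 - |\wt{\phi}(\bxi)|^2$ before passing to the limit, whereas the paper keeps all four terms and applies the corollary to each.
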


\begin{proof}
First, assume that $\phi \in L^2(\T^n)$. Then we have
\begin{align*}
&\|P_{H^2(\D^n)}(\phi k_{\bxi})-\wt{\phi}(\bxi)k_{\bxi}\|^2\\
&= \|P_{H^2(\D^n)}(\phi k_{\bxi})\|^2+\|\wt{\phi}(\bxi)k_{\bxi}\|^2
-\la P_{H^2(\D^n)}(\phi k_{\bxi}), \wt{\phi}(\bxi)k_{\bxi} \ra
-\la \wt{\phi}(\bxi)k_{\bxi}, P_{H^2(\D^n)}(\phi k_{\bxi}) \ra \\
&= \|P_{H^2(\D^n)}(\phi k_{\bxi})\|^2+\|\wt{\phi}(\bxi)k_{\bxi}\|^2
-\ol{\wt{\phi}(\bxi)}\la P_{H^2(\D^n)}(\phi k_{\bxi}), k_{\bxi} \ra
-\wt{\phi}(\bxi)\la k_{\bxi}, P_{H^2(\D^n)}(\phi k_{\bxi}) \ra. 
\end{align*}
Now Corollary \ref{cor:my corollary} yields
\[
\|P_{H^2(\D^n)}(\phi k_{\bxi})-\wt{\phi}(\bxi)k_{\bxi}\| \to 0 \quad \text{ radially}.
\]

Finally, if $\phi\in L^{\infty}(\T^n)$, then $P_{H^2(\D^n)}(\phi k_{\bxi})=T_{\phi}k_{\bxi}$ and  hence the required result follows.
\end{proof}

\begin{prop}\label{prop:B is ideal w.r.to Toeplitz}
If $\phi\in L^{\infty}(\T^n)$ and $A\in\Bscr(\psi,\bm{0})$, then $T_{\phi}A, AT_{\phi}\in \Bscr(\psi,\bm{0})$.    
\end{prop}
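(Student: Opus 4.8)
The plan is to verify, for each of the two operators $B\in\{T_\phi A,\,AT_\phi\}$, the two radial-decay conditions required for membership in $\Bscr(\psi,\bm{0})$, namely $\|BT_\psi k_{\bxi}\|\to 0$ and $\|B^*T_\psi k_{\bxi}\|\to 0$ radially. Since the defining conditions of $\Bscr(\psi,\bm{0})$ are symmetric under $A\leftrightarrow A^*$ (equivalently, by Lemma \ref{B is c* algebra}), the adjoint $A^*$ also lies in $\Bscr(\psi,\bm{0})$, so I may use both $\|AT_\psi k_{\bxi}\|\to 0$ and $\|A^*T_\psi k_{\bxi}\|\to 0$ radially. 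Two of the four conditions are then immediate from submultiplicativity and $T_\phi^*=T_{\bar\phi}$: one has $\|T_\phi AT_\psi k_{\bxi}\|\le\|T_\phi\|\,\|AT_\psi k_{\bxi}\|\to 0$ and $\|(AT_\phi)^*T_\psi k_{\bxi}\|=\|T_{\bar\phi}A^*T_\psi k_{\bxi}\|\le\|T_{\bar\phi}\|\,\|A^*T_\psi k_{\bxi}\|\to 0$, both radially.

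The genuine content lies in the remaining two terms, $\|(AT_\phi)T_\psi k_{\bxi}\|=\|AT_\phi T_\psi k_{\bxi}\|$ and $\|(T_\phi A)^*T_\psi k_{\bxi}\|=\|A^*T_{\bar\phi}T_\psi k_{\bxi}\|$. Here the naive estimate $\|AT_\phi T_\psi k_{\bxi}\|\le\|AT_\phi\|\,\|T_\psi k_{\bxi}\|$ is useless because $\|T_\psi k_{\bxi}\|\not\to 0$; the symbol effectively attached to $A$ is now $\phi\psi$ rather than $\psi$, so the hypothesis $A\in\Bscr(\psi,\bm{0})$ cannot be invoked directly. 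This symbol mismatch is the main obstacle, and I plan to resolve it by replacing $T_\phi T_\psi k_{\bxi}$ by its radial ``eigenvalue'' multiple of $k_{\bxi}$.

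Concretely, I would first record, for $u,v\in L^\infty(\T^n)$, the approximate-eigenvector identity $\|T_uT_v k_{\bxi}-\wt{u}(\bxi)\wt{v}(\bxi)k_{\bxi}\|\to 0$ radially. This follows from two applications of Theorem \ref{thm:eigenfunctions of Toeplitz operator}: writing $T_uT_vk_{\bxi}-\wt u(\bxi)\wt v(\bxi)k_{\bxi}=T_u\big(T_vk_{\bxi}-\wt v(\bxi)k_{\bxi}\big)+\wt v(\bxi)\big(T_uk_{\bxi}-\wt u(\bxi)k_{\bxi}\big)$ and bounding the norm by $\|T_u\|\,\|T_vk_{\bxi}-\wt v(\bxi)k_{\bxi}\|+\|v\|_\infty\,\|T_uk_{\bxi}-\wt u(\bxi)k_{\bxi}\|$, where $|\wt v(\bxi)|\le\|v\|_\infty$. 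Second, for any $C\in\Bscr(\psi,\bm{0})$ I would establish the weighted decay $|\wt\psi(\bxi)|\,\|Ck_{\bxi}\|\to 0$ radially: as $\|k_{\bxi}\|=1$ this quantity equals $\|C(\wt\psi(\bxi)k_{\bxi})\|$, and $\|C(\wt\psi(\bxi)k_{\bxi})\|\le\|C\|\,\|\wt\psi(\bxi)k_{\bxi}-T_\psi k_{\bxi}\|+\|CT_\psi k_{\bxi}\|$, whose two summands tend to $0$ radially by Theorem \ref{thm:eigenfunctions of Toeplitz operator} and by the definition of $\Bscr(\psi,\bm{0})$, respectively.

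Combining these two facts, for any $u\in L^\infty(\T^n)$ and $C\in\Bscr(\psi,\bm{0})$ I obtain $\|CT_uT_\psi k_{\bxi}\|\le\|C\|\,\|T_uT_\psi k_{\bxi}-\wt u(\bxi)\wt\psi(\bxi)k_{\bxi}\|+\|u\|_\infty\,|\wt\psi(\bxi)|\,\|Ck_{\bxi}\|\to 0$ radially, the scalar $\wt u(\bxi)$ being absorbed via $|\wt u(\bxi)|\le\|u\|_\infty$. Taking $C=A$, $u=\phi$ disposes of $\|AT_\phi T_\psi k_{\bxi}\|$, and taking $C=A^*$, $u=\bar\phi$ disposes of $\|A^*T_{\bar\phi}T_\psi k_{\bxi}\|$; all four conditions then hold, giving $T_\phi A,\,AT_\phi\in\Bscr(\psi,\bm{0})$. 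The only point needing care is the bookkeeping of the various radial exceptional null sets, but since only finitely many terms appear in each estimate their union is again null, so this is routine.
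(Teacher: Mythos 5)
Your proof is correct, and it follows the same overall strategy as the paper's: both reduce the problem to showing $\|AT_{\phi}T_{\psi}k_{\bxi}\|\to 0$ radially for arbitrary $A\in\Bscr(\psi,\bm{0})$ and $\phi\in L^{\infty}(\T^n)$ (the other two conditions being trivial, and the adjoint cases handled by the self-adjointness of the class), and both resolve the ``symbol mismatch'' by exploiting the approximate-eigenvector behaviour of Toeplitz operators at the kernels $k_{\bxi}$. The difference is in how that behaviour is deployed. The paper stops at the intermediate approximant $\wt{\phi}(\bxi)T_{\psi}k_{\bxi}$, estimating $\|AT_{\phi}T_{\psi}k_{\bxi}\|\leq\|A\|\,\|T_{\phi}T_{\psi}k_{\bxi}-\wt{\phi}(\bxi)T_{\psi}k_{\bxi}\|+\|\phi\|_{\infty}\|AT_{\psi}k_{\bxi}\|$, and proves the first term tends to zero by expanding the norm squared into Berezin transforms of products of Toeplitz operators and invoking the symbol map (Theorem \ref{thm:douglas theorem for polydisk}) together with Theorems \ref{thm: radial limit theorem} and \ref{thm:radial limit of Toeplitz operaor}. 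You instead push the approximation one step further, to $\wt{\phi}(\bxi)\wt{\psi}(\bxi)k_{\bxi}$, obtaining everything from two applications of Theorem \ref{thm:eigenfunctions of Toeplitz operator} plus a back-and-forth between $T_{\psi}k_{\bxi}$ and $\wt{\psi}(\bxi)k_{\bxi}$ to re-enter the hypothesis $\|AT_{\psi}k_{\bxi}\|\to 0$. Your route is slightly longer in bookkeeping but buys independence from the symbol map theorem of Section 4, making the argument self-contained within Section 5 (Theorem \ref{thm:eigenfunctions of Toeplitz operator} rests only on Corollary \ref{cor:my corollary}); the paper's computation is shorter but leans on the heavier machinery already established. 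There is no circularity in either version, since Theorem \ref{thm:eigenfunctions of Toeplitz operator} precedes Proposition \ref{prop:B is ideal w.r.to Toeplitz}.
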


\begin{proof}
To show $T_{\phi}A$ and $AT_{\phi}$ are in $\Bscr(\psi,\bm{0})$ for $A\in\Bscr(\psi,\bm{0})$ and $\phi\in L^{\infty}(\T^n)$, it suffices to check that $\|AT_{\phi}T_{\psi}k_{\bxi}\|\to 0$ radially. Note that
\begin{align}
\|AT_{\phi}T_{\psi}k_{\bxi}\|
&\leq \|AT_{\phi}T_{\psi}k_{\bxi}-\widetilde{\phi}(\bxi)AT_{\psi}k_{\bxi}\|+\|\wt{\phi}             
(\bxi)AT_{\psi}k_{\bxi}\|   \notag \\
&\leq \|A\|\|T_{\phi}T_{\psi}k_{\bxi}-\wt{\phi}
(\bxi)T_{\psi}k_{\bxi}\|+\|\phi\|_{\infty}\|AT_{\psi}k_{\bxi}\|. \label{eq:B is ideal w.r.to Toeplitz}
\end{align}
Also 
\[
\|T_{\phi}T_{\psi}k_{\bxi}-\widetilde{\phi}(\bxi)T_{\psi}k_{\bxi}\|^2=(T_{\psi}^*T_{\phi}^*T_{\phi}T_{\psi}\widetilde{)}(\bxi)+|\widetilde{\phi}(\bxi)|^2\|T_{\psi}k_{\bxi}\|^2-2\text{Re}\left(\overline{\widetilde{\phi}(\bxi)}(T_{\psi}^*T_{\phi}T_{\psi}\widetilde{)}(\bxi)\right). 
\]
Applying Theorems \ref{thm: radial limit theorem}, \ref{thm:douglas theorem for polydisk}, and \ref{thm:radial limit of Toeplitz operaor}, we obtain 
\[
\|T_{\phi}T_{\psi}k_{\bxi}-\widetilde{\phi}(\bxi)T_{\psi}k_{\bxi}\|^2
\to |\phi \psi|^2+|\phi \psi|^2-2|\phi \psi|^2=0 \quad \text{radially.}
\]
Hence $\|AT_{\phi}T_{\psi}k_{\bxi}\|\to 0$ radially follows from \eqref{eq:B is ideal w.r.to Toeplitz}. 
\end{proof}

An immediate consequence of Proposition \ref{prop:B is ideal w.r.to Toeplitz} is the following.

\begin{lem}
Let $\psi_1$ and $\psi_2$ be two inner functions on $\D^n$ and let $\psi=\overline{\psi_1}\psi_2$. Then $\Bscr(\psi,\bm{0})=\Bscr$.
\end{lem}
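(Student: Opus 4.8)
The plan is to derive both inclusions $\Bscr\subseteq\Bscr(\psi,\bm{0})$ and $\Bscr(\psi,\bm{0})\subseteq\Bscr$ straight from Proposition \ref{prop:B is ideal w.r.to Toeplitz}, invoking that $\psi_1,\psi_2$ are inner \emph{only} through the single identity $\overline{\psi}\psi=|\psi|^2=1$ a.e.\ on $\T^n$ (since $|\psi_1|=|\psi_2|=1$ a.e.), equivalently $T_{\overline{\psi}\psi}=I$. Note first that $\psi\in L^{\infty}(\T^n)$, so $T_{\psi}$ and $T_{\overline{\psi}}$ are well-defined bounded operators. I would also record at the outset that both $\Bscr(\psi,\bm{0})$ and $\Bscr=\Bscr(1,\bm{0})$ are $C^*$-algebras by Lemma \ref{B is c* algebra}, hence closed under adjoints; this lets me verify in each inclusion only the single radial-limit condition $\|(\cdot)k_{\bxi}\|\to0$ and then run the identical argument on the adjoint.

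For $\Bscr\subseteq\Bscr(\psi,\bm{0})$, I would take $A\in\Bscr$ and apply Proposition \ref{prop:B is ideal w.r.to Toeplitz} with the base weight specialized to $1$ (so the ambient algebra is $\Bscr=\Bscr(1,\bm{0})$) and multiplier $T_{\psi}$, giving $AT_{\psi}\in\Bscr$; applying it also to $A^{*}\in\Bscr$ gives $A^{*}T_{\psi}\in\Bscr$. Unwinding the definition of $\Bscr=\Bscr(1,\bm{0})$ (where $T_1=I$), these memberships read exactly $\|AT_{\psi}k_{\bxi}\|\to0$ and $\|A^{*}T_{\psi}k_{\bxi}\|\to0$ radially, i.e.\ $A\in\Bscr(\psi,\bm{0})$.

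For the reverse inclusion I would take $A\in\Bscr(\psi,\bm{0})$ and apply Proposition \ref{prop:B is ideal w.r.to Toeplitz} (now to the algebra $\Bscr(\psi,\bm{0})$) with multiplier $T_{\overline{\psi}}$, obtaining $AT_{\overline{\psi}}\in\Bscr(\psi,\bm{0})$ and, via the adjoint, $A^{*}T_{\overline{\psi}}\in\Bscr(\psi,\bm{0})$; unwinding the definition yields $\|AT_{\overline{\psi}}T_{\psi}k_{\bxi}\|\to0$ and $\|A^{*}T_{\overline{\psi}}T_{\psi}k_{\bxi}\|\to0$ radially. The decisive step is then to expand $T_{\overline{\psi}}T_{\psi}=T_{\overline{\psi}\psi}-[T_{\overline{\psi}},T_{\psi})=I-S_0$, where $S_0:=[T_{\overline{\psi}},T_{\psi})\in\Sscr$ by definition of the semicommutator ideal, and where the inner hypothesis is precisely what collapses the leading term $T_{\overline{\psi}\psi}$ to $I$. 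Since $\Sscr\subseteq\Bscr$ by Lemma \ref{lem: B contains semicommutator}, we have $\|S_0k_{\bxi}\|\to0$ radially, whence $\|Ak_{\bxi}\|\le\|AT_{\overline{\psi}}T_{\psi}k_{\bxi}\|+\|A\|\,\|S_0k_{\bxi}\|\to0$ radially, and likewise $\|A^{*}k_{\bxi}\|\to0$. Thus $A\in\Bscr$.

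The step I expect to be the crux is this last passage $T_{\overline{\psi}}T_{\psi}=I-S_0$ with $S_0\in\Sscr$: it is what converts the ``$T_{\psi}$-weighted'' vanishing back into the unweighted vanishing, and it hinges on combining the semicommutator identity with the containment $\Sscr\subseteq\Bscr$ (which itself rests on the Hankel estimate of Theorem \ref{thm:Hankel radial limit 0}). Everything else is soft: the two inclusions are mirror images of each other, and the radial-limit bookkeeping — replacing the finitely many almost-everywhere statements by their common exceptional null set — is routine and I would not belabor it.
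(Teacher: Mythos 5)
Your proof is correct and takes essentially the same route as the paper, whose entire proof is the remark that the lemma follows from Proposition \ref{prop:B is ideal w.r.to Toeplitz}; you simply supply the details the authors leave implicit. In particular, the identity $T_{\overline{\psi}}T_{\psi}=T_{\overline{\psi}\psi}-[T_{\overline{\psi}},T_{\psi})=I-S_0$ with $S_0\in\Sscr\subseteq\Bscr$ is exactly the step that converts the $T_{\psi}$-weighted vanishing back to the unweighted one, and your use of closure under adjoints to handle the $A^{*}$ conditions is sound.
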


\begin{proof}
Follows from Proposition \ref{prop:B is ideal w.r.to Toeplitz}. 
\end{proof}

Now we record the following properties of the collection $\Bscr$ defined 
in \eqref{eq:Iscr algebra}:

\begin{rem}\label{T phi is 0 in B} 
Suppose $\phi \in L^{\infty}(T^n)$ and $T_{\phi}\in \Bscr(\psi,\bm{0})$, where $|\psi| > 0$ a.e. on $\T^n$. Then
\[
|\widetilde{T_{\phi\psi}}(\bxi)|\leq\|T_{\phi\psi}k_{\bxi}\|\leq \|[T_{\phi},T_{\psi})k_{\bxi}\|+\|T_{\phi}T_{\psi}k_{\bxi}\|
\to 0\quad\text{radially.}
\]
Now Theorem \ref{thm:radial limit of Toeplitz operaor} yields that $\phi\psi \equiv 0$ a.e. on $\T^n$. But $|\psi| > 0$ a.e. on $\T^n$, hence $\phi\equiv 0$ a.e. on $\T^n$.
Thus, any Toeplitz operator in $\Bscr(\psi,\bm{0})$ corresponds to the symbol $0$.		
\end{rem}

For $\psi\in L^{\infty}(\T^n)$ with $|\psi| =1$ a.e. on $\T^n$, consider
\[
\Ascr_{\psi}:=\{T_{\phi}+A : \phi\in L^{\infty}(\T^n), A\in \Bscr(\psi,\bm{0})\}.
\]

\begin{thm}\label{thm:extended symbol map theorem}
The collection $\Ascr_{\psi}$ satisfies the following:
\begin{enumerate}[label={\textnormal{(\arabic*)}}]
\item \label{thm:extension to A (1)}
If $T\in\Ascr_{\psi}$, then $\widetilde{T}\to \phi$ radially for some $\phi\in L^{\infty}(\T^n)$.
\item \label{thm:extension to A (2)} $\Ascr_{\psi}$ is a $C^*$-algebra.
\item \label{thm:extension to A (3)} Define
\[
\Sigma_{\psi}:\Ascr_{\psi} \to L^{\infty}(\T^n) \quad \text{by} \quad \Sigma_{\psi}(T)=\phi,
\]
where $\phi$ is as in \ref{thm:extension to A (1)}. Then $\Sigma_{\psi}$ induces a 
$C^*$-algebra homomorphism from $\Ascr_{\psi}$ onto $L^{\infty}(\T^n)$ such that 
$\Sigma_{\psi}(T_{\phi})=\phi$ for all $\phi\in L^{\infty}(\T^n)$.
\end{enumerate}
\end{thm}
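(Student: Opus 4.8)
The plan is to treat $\Ascr_\psi$ by exactly the strategy used for $\Tscr(L^{\infty}(\T^n))$ in Theorem \ref{thm:douglas theorem for polydisk}, with the decomposition $T=T_\phi+A$ (where $A\in\Bscr(\psi,\bm{0})$) playing the role that $T=T_\phi+S$ (with $S\in\Sscr$) played there. The one genuinely new ingredient, and the step I expect to be the main obstacle, is to show that the defining conditions of $\Bscr(\psi,\bm{0})$ already force $\|Ak_{\bxi}\|\to 0$ and $\|A^*k_{\bxi}\|\to 0$ radially, that is, that $\Bscr(\psi,\bm{0})=\Bscr$ when $|\psi|=1$ a.e. This is precisely where the hypothesis $|\psi|=1$ enters; everything downstream is a faithful adaptation of the Toeplitz-algebra argument.

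First I would establish the identification $\Bscr(\psi,\bm{0})=\Bscr$. By Theorem \ref{thm:eigenfunctions of Toeplitz operator} we have $\|T_\psi k_{\bxi}-\widetilde{\psi}(\bxi)k_{\bxi}\|\to 0$ radially, and since $|\psi|=1$ a.e., Theorem \ref{thm: radial limit theorem} gives $|\widetilde{\psi}(\bxi)|\to 1$ radially. For $A\in\Bscr(\psi,\bm{0})$, writing $\widetilde{\psi}(\bxi)Ak_{\bxi}=AT_\psi k_{\bxi}-A\big(T_\psi k_{\bxi}-\widetilde{\psi}(\bxi)k_{\bxi}\big)$ and taking norms yields
\[
|\widetilde{\psi}(\bxi)|\,\|Ak_{\bxi}\|\le \|AT_\psi k_{\bxi}\|+\|A\|\,\|T_\psi k_{\bxi}-\widetilde{\psi}(\bxi)k_{\bxi}\|;
\]
the right-hand side tends to $0$ radially while $|\widetilde{\psi}(\bxi)|\to 1$, so $\|Ak_{\bxi}\|\to 0$ radially, and the same argument applied to $A^*$ (which lies in $\Bscr(\psi,\bm{0})$ by Lemma \ref{B is c* algebra}) gives $\|A^*k_{\bxi}\|\to 0$ radially; hence $A\in\Bscr$. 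Reading the analogous inequality for $\|AT_\psi k_{\bxi}\|$ the other way and using $|\widetilde{\psi}|\le 1$ gives the reverse inclusion $\Bscr\subseteq\Bscr(\psi,\bm{0})$. Part \ref{thm:extension to A (1)} then follows at once: for $T=T_\phi+A$ we have $|\widetilde{A}(\bxi)|\le\|Ak_{\bxi}\|\to 0$ radially, while $\widetilde{T_\phi}\to\phi$ radially by Theorem \ref{thm:radial limit of Toeplitz operaor}, so $\widetilde{T}\to\phi$ radially.

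Next I would set up $\Sigma_\psi$ and prove part \ref{thm:extension to A (2)}. Well-definedness follows because $T_{\phi_1}+A_1=T_{\phi_2}+A_2$ forces $T_{\phi_1-\phi_2}\in\Bscr(\psi,\bm{0})$, whence $\phi_1=\phi_2$ a.e. by Remark \ref{T phi is 0 in B} (applicable since $|\psi|=1>0$ a.e.); boundedness follows from $|\widetilde{T}(\bxi)|\le\|T\|$ together with $\widetilde{T}\to\phi$ radially, giving $\|\Sigma_\psi(T)\|_{\infty}\le\|T\|$. Closure of $\Ascr_\psi$ under sums, scalars and adjoints is immediate from Lemma \ref{B is c* algebra}. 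For products I would expand
\[
T_1T_2=T_{\phi_1\phi_2}+\Big(-[T_{\phi_1},T_{\phi_2})+T_{\phi_1}A_2+A_1T_{\phi_2}+A_1A_2\Big),
\]
where the bracketed operator lies in $\Bscr(\psi,\bm{0})$: the semicommutator is in $\Sscr\subseteq\Bscr=\Bscr(\psi,\bm{0})$ by Lemma \ref{lem: B contains semicommutator}, the mixed terms $T_{\phi_1}A_2$ and $A_1T_{\phi_2}$ are in $\Bscr(\psi,\bm{0})$ by Proposition \ref{prop:B is ideal w.r.to Toeplitz}, and $A_1A_2\in\Bscr(\psi,\bm{0})$ by Lemma \ref{B is c* algebra}. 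Finally, norm-closedness is deduced exactly as in Theorem \ref{thm:douglas theorem for polydisk}: if $T_m=T_{\phi_m}+A_m\to T$, then $\|\phi_m-\phi_l\|_{\infty}=\|\Sigma_\psi(T_m-T_l)\|_{\infty}\le\|T_m-T_l\|$ shows $(\phi_m)$ is Cauchy with some limit $\phi$, so $A_m=T_m-T_{\phi_m}\to T-T_\phi$, and $T-T_\phi\in\Bscr(\psi,\bm{0})$ by closedness, giving $T\in\Ascr_\psi$.

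Part \ref{thm:extension to A (3)} is then read off from the pieces already assembled. Linearity of $\Sigma_\psi$ is clear; the product expansion gives $\Sigma_\psi(T_1T_2)=\phi_1\phi_2=\Sigma_\psi(T_1)\Sigma_\psi(T_2)$; the adjoint decomposition $T^*=T_{\overline{\phi}}+A^*$ gives $\Sigma_\psi(T^*)=\overline{\phi}=\overline{\Sigma_\psi(T)}$; and surjectivity onto $L^{\infty}(\T^n)$ is witnessed by $\Sigma_\psi(T_\phi)=\phi$. The delicate point throughout remains the lower bound $|\widetilde{\psi}(\bxi)|\to 1$ of the second paragraph, which is exactly what makes the decomposition $T=T_\phi+A$ determine $\phi$ uniquely; once that is in hand, the proof is structurally identical to that of the symbol map on the full Toeplitz algebra.
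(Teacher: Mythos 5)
Your proposal is correct, and the skeleton (decomposition $T=T_{\phi}+A$, well-definedness via Remark \ref{T phi is 0 in B}, the bound $\|\Sigma_{\psi}(T)\|_{\infty}\leq\|T\|$, the product expansion, and closedness via the Cauchy argument on symbols) coincides with the paper's. The genuine divergence is in the key step of part \ref{thm:extension to A (1)}. The paper never shows $\|Ak_{\bxi}\|\to 0$ for $A\in\Bscr(\psi,\bm{0})$; instead it bounds $|\widetilde{A}(\bxi)|=|\langle\psi Ak_{\bxi},\psi k_{\bxi}\rangle|$ directly (using $|\psi|=1$ here) by splitting against $P_{H^2(\D^n)}\oplus P_{H^2(\D^n)^{\perp}}$ to get $|\widetilde{A}(\bxi)|\leq\|A^*T_{\psi}^*T_{\psi}k_{\bxi}\|+\|H_{\psi}A\|\,\|H_{\psi}k_{\bxi}\|$, and then invokes Proposition \ref{prop:B is ideal w.r.to Toeplitz} and Theorem \ref{thm:Hankel radial limit 0}. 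You instead prove the stronger structural identity $\Bscr(\psi,\bm{0})=\Bscr$ for every unimodular $\psi$, via the approximate-eigenvector property $\|T_{\psi}k_{\bxi}-\widetilde{\psi}(\bxi)k_{\bxi}\|\to 0$ (Theorem \ref{thm:eigenfunctions of Toeplitz operator}) together with $|\widetilde{\psi}(\bxi)|\to 1$ radially; this is sound, provided you note that the three radial-limit statements each hold off a null set whose union is still null. What your route buys is nontrivial: it subsumes the paper's Lemma on $\Bscr(\overline{\psi_1}\psi_2,\bm{0})=\Bscr$ for inner $\psi_1,\psi_2$ as a special case, and it yields $\Ascr_{\psi}=\Ascr$ immediately, a fact the paper only extracts later (in its final Proposition) from a maximality argument. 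The price is that your argument leans on Theorem \ref{thm:eigenfunctions of Toeplitz operator}, whereas the paper's version of part \ref{thm:extension to A (1)} needs only the Toeplitz--Hankel splitting and the ideal property of $\Bscr(\psi,\bm{0})$.
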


\begin{proof}
\begin{enumerate}[label={\textnormal{(\arabic*)}}]

\item Let $T\in\Ascr_{\psi}$. Then $T=T_{\phi}+A$ for some $\phi\in L^{\infty}
(\T^n),A\in\Bscr(\psi,\bm{0})$. Now Theorem \ref{thm:radial limit of Toeplitz operaor}
implies that
\[
\widetilde{T}=\widetilde{T_{\phi}}+\widetilde{A}\to\phi \quad \text{radially}.
\]
Also
\begin{align*}
|\widetilde{A}(\bxi)| 
&= |\la Ak_{\bxi}, k_{\bxi} \ra|\\
&= |\la \psi Ak_{\bxi}, \psi k_{\bxi} \ra|\\
&\leq |\la P_{H^2(\D^n)}(\psi Ak_{\bxi}), P_{H^2(\D^n)}(\psi k_{\bxi}) \ra|+
|\la P_{H^2(\D^n)^\perp}(\psi Ak_{\bxi}), P_{H^2(\D^n)^\perp}(\psi k_{\bxi}) \ra|\\
&= |\la T_{\psi}Ak_{\bxi}, T_{\psi}k_{\bxi} \ra|+|\la H_{\psi}Ak_{\bxi}, H_{\psi}k_{\bxi} \ra|\\
&\leq \|A^*T_{\psi}^*T_{\psi}k_{\bxi}\|+\|H_{\psi}A\|\|H_{\psi}k_{\bxi}\|.
\end{align*}
Thus, Proposition \ref{prop:B is ideal w.r.to Toeplitz} and Theorem \ref{thm:Hankel radial limit 0} infer that $\widetilde{A}(\bxi)\to 0$ radially.

\item It is immediate from Lemma \ref{B is c* algebra} that $\Ascr_{\psi}$ is linear space and closed under adjoint. Let $\phi_1,\phi_2\in L^{\infty}(\T^n)$, and $A_1,A_2\in \Bscr(\psi,\bm{0})$. Then
\begin{align}
(T_{\phi_1}+A_1)(T_{\phi_2}+A_2)
&= T_{\phi_1}T_{\phi_2}+T_{\phi_1}A_2+A_1T_{\phi_2}+A_1A_2 \notag \\
&= T_{\phi_1\phi_2}-([T_{\phi_1},T_{\phi_2})-T_{\phi_1}A_2-A_1T_{\phi_2}-A_1A_2). 
\label{eq:extended symbol map thm}
\end{align}
Using Lemma \ref{B is c* algebra}, Lemma \ref{lem: B contains semicommutator}, and Proposition \ref{prop:B is ideal w.r.to Toeplitz}, we have $\Ascr_{\psi}$ is closed 
under multiplication.

To show that $\Ascr_{\psi}$ is closed, we show $\Ascr_{\psi}=\overline{\Ascr_{\psi}}$. Consider 
\[
\eta_0:\Ascr_{\psi}\to L^{\infty}(\T^n) \quad \text{defined by} \quad \eta_{0}(T_{\phi}+A)=\phi.
\]
It is easily seen that $\eta_0$ is a well-defined bounded linear map from $\Ascr_{\psi}$ onto $L^{\infty}(\T^n).$ Suppose $\eta$ is the bounded linear extension of $\eta_0$ from $\overline{\Ascr_{\psi}}$ onto $L^{\infty}(\T^n)$. Assume that $T\in \overline{\Ascr_{\psi}}$. Then there exists a sequence $\{T_{\phi_m}+A_m\}$ in 
$\Ascr_{\psi}$ such that $\|(T_{\phi_m}+A_m)-T\|\to 0$ as $m \to\infty$, and hence 
$\phi_m=\eta(T_{\phi_m}+A_m)\to \eta(T)$ in $L^{\infty}(\T^n).$ Thus 
\[
\|T_{\phi_m}-T_{\eta(T)}\|=\|\phi_m-\eta(T)\|_\infty\to 0 \quad \mbox{as}~ m\to\infty.
\]
Therefore, $A_{m}=(T_{\phi_m}+A_m)-T_{\phi_m}\to T-T_{\eta(T)}$ {as}~ $m\to\infty$. 
Since $\Bscr(\psi,\bm{0})$ is closed, $T-T_{\eta(T)}\in\Bscr(\psi,\bm{0})$. Therefore, $T=T_{\eta(T)}+(T-T_{\eta(T)})\in\Ascr_{\psi}$.

\item It is easy to check that the mapping $\Sigma_{\psi}$ is well-defined linear, preserves conjugation. Also \eqref{eq:extended symbol map thm} yields that the map is multiplicative too. Thus $\Sigma_{\psi}$ is a $C^*$-algebra homomorphism from $\Ascr_{\psi}$ onto $L^{\infty}(\T^n)$, and also $\Sigma_{\psi}(T_{\phi})=\phi$ with ker$(\Sigma_{\psi})=\Bscr(\psi,\bm{0})$.
\end{enumerate}
\end{proof}

It is obvious from Lemma \ref{lem: B contains semicommutator} that 
$\Tscr(L^{\infty}(\T^n)) \subseteq\Ascr_{\psi}$. Following example shows that Theorem \ref{thm:extended symbol map theorem} is indeed an extension of Theorem \ref{thm:douglas theorem for polydisk}.

\begin{ex}
Let $P=P_{\mathcal{M}}$ be the orthogonal projection of $H^2(\D)$ onto the closed subspace $\mathcal{M}$, where $\mathcal{M}:=\bigvee_{m=0}^{\infty}\{z^{2^m}\}$. Then for $m \geq 1$
\[
\|(T_zP-PT_z)z^{2^m}\|=\|z^{2^m+1}-Pz^{2^m+1}\|=1.
\]
But $z^{2^m}\to 0$ weakly as $m\to \infty$, and hence $T_zP-PT_Z$ is not a compact operator. Therefore $P\notin \Tscr(L^{\infty}(\T^n))$ (see \cite{Bar_Hlms-ASYMPTOTIC TOEPLITZ OPERATORS}). Also it is shown in \cite[Example 8]{Engls-SYMBOL MAP VIA BEREZIN TRAN. ON H2} that $\|Pk_{re^{i\theta}}\|\to 0$ as $r\to 1^-$ for a.e.\ $e^{i\theta}\in \T.$ Consider an operator $A$ on $H^2(\D^n)$ defined as
\[
A=\overbrace{P\otimes I\otimes\dots\otimes I}^{n\text{-times}},
\]
where $I$ denotes the identity operator on $H^2(\D)$. Now for $\bxi=(\xi_1, \dots,\xi_n)\in\D^n$,
\begin{align*}
\|Ak_{\bxi}\|
&= \|(P\otimes I\otimes \cdots \otimes I)(k_{\xi_1}\otimes k_{\xi_2}\otimes\cdots\otimes k_{\xi_n})\|\\
&= \|Pk_{\xi_1}\otimes k_{\xi_2}\otimes\cdots\otimes k_{\xi_n}\|\\
&= \|Pk_{\xi_1}\| \|k_{\xi_2}\| \cdots \|k_{\xi_n}\|\\
&= \|Pk_{\xi_1}\|.
\end{align*}
It follows that $\|Ak_{\bxi}\|\to 0$ radially, and $A\in \Ascr_{\psi}$, in fact $A\in \Bscr$. Let $T_{z_i}$ denote the multiplication operator on $H^2(\D^n)$ by the $i$-th coordinate function $z_i$ for $1\leq i\leq n$, that is, $(T_{z_i}f)(\bxi)=\xi_if(\bxi)$ for all $\bxi \in \D^n, f\in H^2(\D^n)$. Consider
\[
\mathbb{E}:=\{T\in\B(H^2(\D^n)) : T_{z_i}^*TT_{z_{i}}-T \text{ is compact for all }
1\leq i\leq n\}.
\]
Then $\mathbb{E}$ is a norm-closed algebra. Now if $T_{\phi}$ is Toeplitz operator on $H^2(\D^n)$, then $T_{z_i}^*T_{\phi}T_{z_i}=T_{\phi}$ for all $1\leq i\leq n$ (see \cite{Mji_Jdb_Srj-TOEP. AND ASYMP. TOEP. ON H2DN}). Hence $\mathbb{E}$ contains all Toeplitz operators on $H^2(\D^n)$. We show that $A \notin \Tscr(L^{\infty}(\T^n))$ by showing that $A\notin \mathbb{E}$. Now
\[
T_{z_1}^*AT_{z_1}-A=(T_z^*PT_z-P)\otimes I\otimes\cdots\otimes I,
\]
and $T_{z}^*PT_{z}-P$ is not compact. Therefore, $T_{z_1}^*AT_{z_1}-A$ is not compact 
(see \cite{Kbrs_Zan-ON COMPACTNESS OF TENSOR PROD.}) and hence $A\notin \mathbb{E}$.
\end{ex}

Denote
\[
\Ascr=\{T_{\phi}+A : \phi\in L^{\infty}(\T^n), A\in\Bscr\}.
\]
Note that if $A\in \Bscr$ then Proposition \ref{prop:B is ideal w.r.to Toeplitz} gives that $A\in \Bscr(\psi,\bm{0})$ for each $\psi\in L^{\infty}(\T^n)$, and hence 
$\Ascr\subseteq\Ascr_{\psi}$. In the next result we prove that the class $\Ascr$ is maximal onto which the \emph{symbol map} exists and $T_{\phi}\to\phi$, provided that the symbol of an operator is obtained as the radial limit of its \textit{Berezin transform}.

\begin{prop}
Let $\mathcal{A}$ be any closed $C^*$-subalgebra of $\B(H^2(\D^n))$ containing Toeplitz operators such that there exists a $C^*$-homomorphism $\mathfrak{S}$ from $\mathcal{A}$ onto $L^{\infty}(\T^n)$ with $\mathfrak{S}(T_{\phi})=\phi$ for all $\phi\in L^{\infty}(\T^n)$, and for each $T \in \mathcal{A}$, $\mathfrak{S}(T)$ is the radial limit of $\widetilde{T}$. Then $\mathcal{A}\subseteq\Ascr.$ In particular, $\Ascr_{\psi}=\Ascr$, where $\Ascr_{\psi}$ is as in Theorem \ref{thm:extended symbol map theorem}.
\end{prop}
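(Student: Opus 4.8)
The plan is to show $\mathcal{A}\subseteq\Ascr$ by fixing an arbitrary $T\in\mathcal{A}$, setting $\phi=\mathfrak{S}(T)\in L^{\infty}(\T^n)$, and proving that the operator $A:=T-T_{\phi}$ belongs to $\Bscr=\Bscr(1,\bm{0})$; this exhibits $T=T_{\phi}+A\in\Ascr$. Since $\mathcal{A}$ contains all Toeplitz operators, $T_{\phi}\in\mathcal{A}$, so $A\in\mathcal{A}$ and hence $A^*A, AA^*\in\mathcal{A}$. Because $\mathfrak{S}$ is a $C^*$-homomorphism with $\mathfrak{S}(T_{\phi})=\phi$, I would first record $\mathfrak{S}(A)=\mathfrak{S}(T)-\mathfrak{S}(T_{\phi})=0$, and then, using multiplicativity and $*$-preservation together with the fact that the involution on $L^{\infty}(\T^n)$ is complex conjugation, $\mathfrak{S}(A^*A)=\overline{\mathfrak{S}(A)}\,\mathfrak{S}(A)=0$ and likewise $\mathfrak{S}(AA^*)=0$.

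The crucial observation is the elementary identity $\|Bk_{\bxi}\|^2=\langle B^*Bk_{\bxi}, k_{\bxi}\rangle=\widetilde{B^*B}(\bxi)$, valid for every $B\in\B(H^2(\D^n))$ since $k_{\bxi}$ is a unit vector. Applying it with $B=A$ and $B=A^*$ gives $\|Ak_{\bxi}\|^2=\widetilde{A^*A}(\bxi)$ and $\|A^*k_{\bxi}\|^2=\widetilde{AA^*}(\bxi)$. Now I would invoke the standing hypothesis that the value of $\mathfrak{S}$ at any element of $\mathcal{A}$ is the radial limit of its Berezin transform; applied to $A^*A$ and to $AA^*$ this yields $\widetilde{A^*A}\to\mathfrak{S}(A^*A)=0$ radially and $\widetilde{AA^*}\to 0$ radially. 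By the identity just noted, $\|Ak_{\bxi}\|^2\to 0$ and $\|A^*k_{\bxi}\|^2\to 0$ radially, whence $\|Ak_{\bxi}\|\to 0$ and $\|A^*k_{\bxi}\|\to 0$ radially. By the definition of $\Bscr$ this is exactly $A\in\Bscr$, so $T\in\Ascr$ and $\mathcal{A}\subseteq\Ascr$.

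For the final assertion $\Ascr_{\psi}=\Ascr$, the inclusion $\Ascr\subseteq\Ascr_{\psi}$ has already been recorded in the remark preceding the statement. For the reverse inclusion I would simply verify that $\Ascr_{\psi}$ meets every hypothesis of the present proposition: by Theorem \ref{thm:extended symbol map theorem} it is a closed $C^*$-subalgebra of $\B(H^2(\D^n))$ containing each Toeplitz operator, the map $\Sigma_{\psi}$ is a $C^*$-homomorphism of $\Ascr_{\psi}$ onto $L^{\infty}(\T^n)$ with $\Sigma_{\psi}(T_{\phi})=\phi$, and part (1) of that theorem asserts that $\Sigma_{\psi}(T)$ is precisely the radial limit of $\widetilde{T}$ for each $T\in\Ascr_{\psi}$. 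Applying the first part of the proof with $\mathcal{A}=\Ascr_{\psi}$ and $\mathfrak{S}=\Sigma_{\psi}$ therefore gives $\Ascr_{\psi}\subseteq\Ascr$, and equality follows.

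I do not anticipate a serious obstruction: the whole argument rests on the single observation that the two norm conditions defining $\Bscr$ are literally Berezin-transform conditions on $A^*A$ and $AA^*$. The one point deserving care is that the hypothesis controls only the diagonal $\widetilde{A}$, which on its own would give merely $\widetilde{A}\to 0$ radially and is too weak to place $A$ in $\Bscr$; the remedy is to feed the positive operators $A^*A$ and $AA^*$, rather than $A$ itself, into the radial-limit hypothesis, which is legitimate precisely because $\mathfrak{S}$ is multiplicative and $\mathcal{A}$ is closed under products and adjoints.
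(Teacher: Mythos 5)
Your proposal is correct and follows essentially the same route as the paper's own proof: write $T=T_{\phi}+A$ with $A\in\ker(\mathfrak{S})$, feed the positive elements $A^*A$ and $AA^*$ into the radial-limit hypothesis via $\|Ak_{\bxi}\|^2=\widetilde{A^*A}(\bxi)$ and $\|A^*k_{\bxi}\|^2=\widetilde{AA^*}(\bxi)$, and conclude $\ker(\mathfrak{S})\subseteq\Bscr$. Your closing paragraph merely spells out more explicitly than the paper does why Theorem \ref{thm:extended symbol map theorem} lets one apply the first part to $\mathcal{A}=\Ascr_{\psi}$, which is a welcome clarification rather than a deviation.
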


\begin{proof}
Let $T\in\mathcal{A}$. Suppose $\mathfrak{S}(T)=\phi$ for some $\phi$ in $L^{\infty}(\T^n)$. Then $T-T_{\phi}\in ker(\mathfrak{S})$ and hence
\[
\mathcal{A}=\{T_{\phi}+A : \phi\in L^{\infty}(\T^n), A\in ker(\mathfrak{S})\}.
\]
Now if $A\in ker(\mathfrak{S})$, then $A^*A$ is so. Therefore, 
\[
\|Ak_{\bxi}\|^2=\langle Ak_{\bxi}, Ak_{\bxi} \rangle=\langle A^*Ak_{\bxi}, k_{\bxi} \rangle=\widetilde{(A^*A)}(\bxi)\to 0 \text{ radially.}
\]
In the similar fashion, it can be shown that $\|A^*k_{\bxi}\|\to 0$ radially. Thus $ker(\mathfrak{S})\subseteq \Bscr$, and hence $\mathcal{A}\subseteq\Ascr$.

Following Theorem \ref{thm:extended symbol map theorem}, in particular, we have 
$\Ascr_{\psi}=\Ascr$ for all $\psi\in L^{\infty}(\T^n)$ with $|\psi|=1$ a.e.\ on $\T^n$.
\end{proof}

We shall conclude this section with the following application of our \emph{symbol map}
result, which gives a partial answer in some sense to the question raised by Barr\'{i}a and Halmos in \cite{Bar_Hlms-ASYMPTOTIC TOEPLITZ OPERATORS}: 

\textsf{Which projections belong to the Toeplitz algebra $\Tscr(L^{\infty}(\T))$?} 

Using the \emph{symbol map}, that is, Theorem \ref{thm:douglas theorem for polydisk}, we shall show that if $P$ is a projection in $\Tscr(L^{\infty}(\T))$ which is a finite rank perturbation of a Toeplitz operator, then the rank of $P$ is finite or $I_{H^2(\D)}-P$ is finite rank, that is, co-finite. However, for $n>1$, if a projection in 
$\Tscr(L^{\infty}(\T^n))$ is compact perturbation of a Toeplitz operator, then its rank 
is either finite or co-finite. More precisely, we have the following result:

\begin{thm}
Let $P$ be an orthogonal projection in the Toeplitz algebra $\Tscr(L^{\infty}(\T^n))$.
For $n=1$, if $P=T_{\phi}+F$, where $\phi\in L^{\infty}(\T)$ and $F$ is of finite rank, then either $P$ or $I_{H^2(\D)}-P$ is of finite rank. For $n>1$, if $P=T_{\phi}+K$, where $\phi\in L^{\infty}(\T^n)$ and $K$ is compact, then either $P$ or $I_{H^2(\D^n)}-P$ is of finite rank. 
\end{thm}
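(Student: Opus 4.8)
\textbf{The plan} is to apply the symbol map $\Sigma$ of Theorem \ref{thm:douglas theorem for polydisk} to the projection $P$ and exploit the fact that $\phi$ must be an idempotent in $L^{\infty}(\T^n)$, hence a characteristic function of some measurable set $E \subseteq \T^n$. Since $P = T_{\phi} + F$ (or $T_{\phi}+K$) with $F$ finite rank (resp.\ $K$ compact) lying in the semicommutator ideal's ambient algebra, and $\Sigma(P) = \phi$, applying $\Sigma$ to the relation $P = P^* = P^2$ gives $\phi = \overline{\phi} = \phi^2$ a.e., so $\phi = \chi_E$ for some Borel set $E \subseteq \T^n$. The core of the argument is then to show that $\chi_E$ must be $0$ or $1$ a.e., i.e.\ $\mathfrak{m}_n(E) \in \{0, 1\}$, and to convert this dichotomy into the finite/co-finite rank conclusion.

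First I would set up the perturbation relation precisely. Writing $P - T_\phi = F$ (finite rank) and using $P^2 = P$ together with $\phi = \chi_E$, I would compute $T_\phi^2 - T_\phi = (P-F)^2 - (P-F) = -FP - PF + F^2 + F$, which is again finite rank (resp.\ compact), so $T_{\chi_E}^2 - T_{\chi_E} = T_{\chi_E}T_{\chi_E} - T_{\chi_E \chi_E}$ differs from $-[T_{\chi_E}, T_{\chi_E})$ by the Toeplitz relation and is finite rank (resp.\ compact). Thus the semicommutator $[T_{\chi_E}, T_{\chi_E}) = H_{\overline{\chi_E}}^* H_{\chi_E} = H_{\chi_E}^* H_{\chi_E}$ is finite rank (resp.\ compact); equivalently the Hankel operator $H_{\chi_E}$ is finite rank (resp.\ compact). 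The key step is then to prove that a \emph{real-valued} symbol $\chi_E$ with finite-rank (resp.\ compact) Hankel operator must be a.e.\ constant.

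\textbf{The main obstacle} is precisely this Hankel-rigidity step, and the two cases genuinely differ. For $n=1$: a nonconstant real-valued $L^\infty$ symbol has a Hankel operator $H_{\chi_E}$ of infinite rank unless $\chi_E \in H^\infty + \overline{H^\infty}$ with the off-diagonal Fourier coefficients eventually vanishing; since $\chi_E$ is real with $\widehat{\chi_E}(k) = \overline{\widehat{\chi_E}(-k)}$, finite rank of $H_{\chi_E}$ forces only finitely many nonzero negative Fourier coefficients, hence a trigonometric polynomial that is real and idempotent, which must be constant, giving $\mathfrak{m}(E) \in \{0,1\}$. If $E$ is null then $\phi = 0$ and $P = F$ is finite rank; if $E$ is co-null then $\phi = 1$, $I - P = T_{1-\phi} - F = -F$ is finite rank. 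For $n>1$ the hypothesis is only compactness of $K$, hence of $H_{\chi_E}$; here I would invoke that the symbol of an asymptotically Toeplitz / essentially Toeplitz operator on $H^2(\D^n)$ having compact semicommutator forces $\chi_E$ into the relevant $QC$-type class, and use that a real idempotent in that class is constant a.e. (the slice/iterated-shift structure on $H^2(\D^n)$, as in \cite{Mji_Jdb_Srj-TOEP. AND ASYMP. TOEP. ON H2DN}, reduces the polydisc case to one-variable Hankel rigidity along each coordinate). Once $\phi = \chi_E$ is constant, the same bookkeeping as above yields that $P$ or $I_{H^2(\D^n)} - P$ equals a compact idempotent, and a compact orthogonal projection is finite rank, completing the proof. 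I expect the $n>1$ compact case to require the most care, since one must rule out an infinite-rank projection whose defect from a Toeplitz operator is merely compact rather than finite rank, which is why the weaker ``finite or co-finite'' conclusion (rather than the sharp $n=1$ statement) is the correct target there.
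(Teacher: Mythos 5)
Your overall strategy is the same as the paper's: use the symbol map to force $\phi=\chi_E$, use $P=P^2$ to conclude that $T_{\chi_E}-T_{\chi_E}^2=T_{\chi_E}T_{\chi_{E^c}}=H_{\chi_E}^*H_{\chi_E}$ is finite rank (resp.\ compact), and then appeal to a rigidity statement for $H_{\chi_E}$. The divergence, and the problems, are in how you execute the rigidity step in each case.

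For $n=1$, your claim that finite rank of $H_{\chi_E}$ ``forces only finitely many nonzero negative Fourier coefficients'' is false. Kronecker's theorem says that $H_{\chi_E}$ has finite rank if and only if the anti-analytic part of $\chi_E$ is a \emph{rational} function with poles in $\D$; such a function typically has all negative Fourier coefficients nonzero (e.g.\ $H_{\overline{K_a}}$, $0<|a|<1$, has rank one). So the deduction ``hence a trigonometric polynomial'' does not follow. The conclusion is still reachable by your route --- a rational anti-analytic part plus its conjugate is real-analytic on $\T$, and a $\{0,1\}$-valued real-analytic function on the connected set $\T$ is constant --- but that argument is not the one you wrote. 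The paper avoids this entirely by quoting the result of Ding that a finite-rank product $T_{\chi_E}T_{\chi_{E^c}}$ of Toeplitz operators forces one factor to vanish.

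For $n>1$ the gap is more serious: the sentence invoking a ``$QC$-type class'' and a ``slice/iterated-shift reduction to one-variable Hankel rigidity'' is not an argument. Even if compactness of $H_{\chi_E}$ on $H^2(\D^n)$ passed to coordinate slices (which you do not establish), one-variable compact Hankel operators are far from zero (Hartman: $H_\phi$ is compact iff $\phi\in H^\infty+C$), so one-variable rigidity cannot by itself yield $\chi_E$ constant; one would still need the nontrivial fact that a characteristic function in the relevant VMO-type class is constant. The actual key input, which your proposal never identifies, is the theorem of Cotlar and Sadosky that for $n>1$ \emph{every} compact Hankel operator on $H^2(\D^n)$ is zero; combined with the zero-product theorem for Toeplitz operators on the polydisc (Ding), this gives $T_{\chi_E}=0$ or $T_{\chi_{E^c}}=0$ immediately, after which ``compact orthogonal projections have finite rank'' finishes the proof exactly as you say. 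As written, the $n>1$ half of your proposal does not close.
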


\begin{proof}
Suppose $P$ is an orthogonal projection in the Toeplitz algebra $\Tscr(L^{\infty}(\T^n))$
such that $P$ is a either finite or compact perturbation of a Toeplitz operator $T_{\phi}$ for $n\geq 1$. Then Theorem \ref{thm:douglas theorem for polydisk} yields $\overline{\phi}=\phi$ and $\phi^2=\phi$ a.e.\ on $\T^n$. Hence $\phi=\chi_{E}$, where $E=\{\bzeta\in\T^n : \phi(\bzeta)=1\}$, a measurable subset of $\T^n$. 

For $n=1$: If $P=T_{\chi_{E}}+F$, where $\phi\in L^{\infty}(\T)$ and $F$ is of finite rank, then $P^2=P$ gives
\[
T_{\chi_{E}}-T^2_{\chi_{E}}=F^2-F+T_{\chi_{E}}F+FT_{\chi_{E}},
\]
and hence $T_{\chi_{E}}T_{\chi_{E^c}}=T_{\chi_{E}}-T^2_{\chi_{E}}$ is of finite rank. Therefore, either $T_{\chi_E}=0$ or $T_{\chi_{E^c}}=0$ by \cite[Corollary 2.2]{Ding-PRODUCTS OF TOEP. OPERATORS ON POLYDISK.}. Thus, either $P$ or $I_{H^2(\D)}-P$ is of finite rank.

For $n>1$: we shall get
\[
T_{\chi_{E}}-T^2_{\chi_{E}}=K^2-K+T_{\chi_{E}}K+KT_{\chi_{E}},
\]
where $K$ is a compact operator. But $T_{\chi_{E}}T_{\chi_{E^c}}=T_{\chi_{E}}-T^2_{\chi_{E}}=H_{\chi_{E}}^*H_{\chi_E}$ which is compact, and hence $H_{\chi_{E}}$ is compact. Now \cite[Corollary 5]{Cotlar_Sad-ON ABSTRACT A-A-K THEOREM.}, gives all compact Hankel operators are zero whenever $n>1$. Thus $T_{\chi_{E}}T_{\chi_{E^c}}=0$ implies either $T_{\chi_{E}}=0$ or $T_{\chi_{E^c}}=0$ by Theorem 2.1 in \cite{Ding-PRODUCTS OF TOEP. OPERATORS ON POLYDISK.}. Therefore, $P=T_{\chi_{E}}+K$ gives either $P$ or $I_{H^2(\D^n)}-P$ is compact. Since both $P$ and $I_{H^2(\D^n)}-P$ are orthogonal projections and all compact orthogonal projections are of finite rank; therefore either $P$ or $I_{H^2(\D^n)}-P$ is of finite rank.

This completes the proof.
\end{proof}

\section{Concluding Remarks}	

The Toeplitz algebra $\Tscr(L^{\infty}(\T^n))$ provides an effective way to study the Toeplitz operators as elements of a $C^*$-algebra. More specifically, Theorem \ref{thm:douglas theorem for polydisk} shows that the study of the Toeplitz algebra 
$\Tscr(L^{\infty}(\T^n))$ is largely reduces to that of the semi-commutator ideal $\Sscr$. 
Let $\theta$ be an inner function on $\D^n$, and $\clk_{\theta} = H^2(\D^n) \ominus \theta H^2(\D^n)$. Suppose $P_{\theta}$ denotes the orthogonal projection of $H^2(\D^n)$ onto 
$\clk_{\theta}$. Then
\[
P_{\theta}=I_{H^2(\D^n)}-T_{\theta}T_{\overline{\theta}}=[T_{\theta}, T_{\overline{\theta}})\in \Sscr.
\]
In fact, if $\phi=\theta_1\overline{\eta_1}$ and $\psi=\theta_2\overline{\eta_2}$, where $\theta_i, \eta_i$ are inner functions, then 
\begin{align*}
[T_{\phi}, T_{\psi})
&= T_{\phi\psi}-T_{\phi}T_{\psi}\\
&= T_{\theta_1\overline{\eta_1}\theta_2\overline{\eta_2}}-T_{\theta_1\overline{\eta_1}}T_{\theta_2\overline{\eta_2}}\\
&= T_{\overline{\eta_1}}T_{\overline{\eta_2}}T_{\theta_1}T_{\theta_2}-T_{\overline{\eta_1}}T_{\theta_1}T_{\overline{\eta_2}}T_{\theta_2}\\
&= T_{\overline{\eta_1}}T_{\overline{\eta_2}}T_{\theta_1}(I-T_{\eta_2}T_{\overline{\eta_2}})T_{\theta_2}\\
&= T_{\overline{\eta_1}}T_{\overline{\eta_2}}T_{\theta_1}P_{\eta_2}T_{\theta_2}.
\end{align*}
In the case for $n=1$, the semi-commutator ideal coincides with the commutator ideal in 
$\Tscr(L^{\infty}(\T))$ due to the availability of Douglas-Rudin theorm; again from the above calculation the semi-commutator ideal coincides with the closed ideal in 
$\Tscr(L^{\infty}(\T))$ generated by all orthogonal projections on the model space. 
Thus, Douglas-Rudin theorem (see \cite{Dgl_Rudn-APPROXIMATION BY INNER FUNCTIONS}) plays an important role in this study. We now conclude this paper by raising some questions which naturally arise in our paper: 
	
\vspace{0.2cm}

\textsf{Question 1. Is it true that the class $\{\overline{\phi}\psi : \phi, \psi\in H^{\infty}(\T^n)\}$ is norm-dense in $L^{\infty}(\T^n)$ for $n>1$?}

\vspace{0.2cm}	
	
\textsf{Question 2. Does the semi-commutator ideal coincide with the commutator ideal in 
$\Tscr(L^{\infty}(\T^n))$ for $n>1$?}

It is noteworthy to point out that the affirmative answer to {Question 1} gives an affirmative answer to {Question 2}, which we shall try to carry out in the future paper.

\vspace{0.2 cm}	
\NI\textit{Data availability:}
Data sharing is not applicable to this article as no data sets
were generated or analysed during the current study.
\vspace{0.2cm}

\NI\textit{Declarations}
\vspace{0.2cm}

\NI\textit{Conflict of interest:}
The authors have no competing interests to declare.



\begin{thebibliography}{99}
	
\bibitem{Axlr-Berezin transform on the Toeplitz algebra}{S.~Axler and D. Zheng,} 
{\it The Berezin transform on the Toeplitz algebra,} 
{Studia Math. $\bm{127}$ (1998), 113--136.}	
		

		

\bibitem{Bar_Hlms-ASYMPTOTIC TOEPLITZ OPERATORS}{J. Barr\'{i}a and P. R. Halmos,} 
{\it Asymptotic Toeplitz operators,}
{Trans. Amer. Math. Soc. $\bm{273}$ (1982), no. 2, 621--630.}
		
		
\bibitem{Berezin-Covariant and contravariant}{F. A. Berezin,} 
{\it Covariant and contravariant symbols of operators,}
{(Russian) Izv.\ Akad.\ Nauk SSSR Ser.\ Mat.\ $\bm{36}$ (1972), 1134--1167.}


\bibitem{Berezin- Quantization}{F. A. Berezin,} 
{\it Quantization of complex symmetric spaces,} 
{(Russian) Izv. Akad. Ser. Mat. $\bm{39}$ 1975, no. 2, 363--402.} 

	

\bibitem{Brg_Cbr-TOEPL. OPER. QUANTUM MECH.}{C. Berger and L. Coburn}, 
{\it Toeplitz operators and quantum mechanics,}
{J. Funct.\ Anal.\ $\bm{68}$ (1986), 273--299.}
		


\bibitem{Cotlar_Sad-ON ABSTRACT A-A-K THEOREM.}{M.~Cotlar, C.~Sadosky, } 
{\it Abstract, weighted, and multidimensional Adamjan-Arov-Krein theorem, and the singular numbers of Sarason commutants, }
{Integral Equations Operator Theory $\bm{17}$ (1993), 169--201.}
		


\bibitem{Ding-PRODUCTS OF TOEP. OPERATORS ON POLYDISK.}{X.~Ding, } 
{\it Products of Toeplitz operators on the polydisk, }
{Integral Equations Operator Theory $\bm{45}$ (2003), 389--403.}


\bibitem{BH-Alg. pro. of TO}{A. Brown, P. Halmos,} 
{\it Algebraic properties of Toeplitz operators,} 
{J. Reine Angew. Math. $\bm{100}$ (1963), 89--102}.


\bibitem{Coburn-$C^*$-algebra gtd by an isometry}{L. A. Coburn,}
{\it The $C^*$-algebra generated by an isometry,} 
{Bull. Amer. Math. Soc. $\bm{73}$ (1967), 722--726.}

		
\bibitem{Dgl_Rudn-APPROXIMATION BY INNER FUNCTIONS}{R.~G.~Douglas, W.~Rudin,}  
{\it Approximation by inner functions,}
{Pacific J. of Math. $\bm{31}$ (1969), no. 2, 313--320.}
		
		
\bibitem{Dgl-BANACH ALGEBRA TECH. IN OPER. TH.}{R.~G.~Douglas,} 
{\it Banach algebra techniques in operator theory,}
{Academic Press Vol. 179, 1972.}
		
		
\bibitem{Engls-SYMBOL MAP VIA BEREZIN TRAN. ON H2}{M. Engli\v{s},} 
{\it Toeplitz operators and the Berezin transform on $H^2$,}
{Linear Algebra Appl. $\bm{232}$ (1995), 171--204.}

		
\bibitem{Engls-Berezin quantization and reproducing kernels}{M. Engli\v{s},} 
{\it Berezin quantization and reproducing kernels on complex domains,} 
{Trans. Amer. Math. Soc. $\bm{348}$ 1996, no. 2, 411--479.}		
		
		
		
		
		
		
\bibitem{Kbrs_Zan-ON COMPACTNESS OF TENSOR PROD.}{C.~Kubrusly and J.~ Zanni,} 
{\it A note on compactness of tensor products,}
{Acta Math. Univ. Comenian. (N.S.) $\bm{84}$ (2015), no. 1, 59--62.}
		
		
\bibitem{Mji_Jdb_Srj-TOEP. AND ASYMP. TOEP. ON H2DN}{A.~Maji, J.~Sarkar and S.~Sarkar,} 
{\it Toeplitz and asymptotic Toeplitz operators on $H^2(\D^n)$,}
{Bull. Sci. Math. $\bm{146}$ (2018), 33--49.}
	
		
\bibitem{Rudn-FUNCTION THEORY IN POLYDISCS}{W.~Rudin,} 
{\it Function theory in polydiscs,}
{W.\ A.\ Benjamin, Inc., New York, 1969.}
		
		
\bibitem{Stf-ALGEBRAIC PROP. OF TOEP. OPER. via BEREZIN}{K.~Stroethoff,} 
{\it Algebraic properties of Toeplitz operators on the Hardy space via the Berezin transform,}
{in: Contemp. Math., $\bm{232}$, Amer. Math. Soc., Providence, RI, 1999, pp. 313--319.}
	
		
\bibitem{Zhu-VMO ESV and TOEP. OPER.}{K. Zhu,} 
{\it VMO, ESV, and Toeplitz operators on the Bergman space,}
{Trans.\ Amer.\ Math.\ Soc.\ $\bm{302}$ (1987), 617--646.}



\bibitem{Zhu-Analysis on Fock Spaces}{K. Zhu,} 
{\it Analysis on Fock Spaces,} 
{New York: Springer, 2012.}
 
 

\end{thebibliography}
\end{document}